\documentclass[a4paper,reqno]{amsart}




\newif\ifprivate
\privatetrue


\usepackage[utf8]{inputenc}
\usepackage{ae,aecompl}   
\usepackage[english]{babel}




\usepackage[wnafs,backref]{mathdef}

\ifprivate
\else
\renewcommand{\TODO}[1]{}
\fi


\numberwithin{equation}{section}
\numberwithin{figure}{section}
\numberwithin{table}{section}
\numberwithin{algorithm}{section}

\makeatletter \renewcommand\p@enumii{} \makeatother

\newcommand{\itemref}[1]{(\ref{#1})}

\newcommand{\wsubadditive}[1][w]{$#1$\nbd-subadditive}
\newcommand{\wsubadditivity}[1][w]{$#1$\nbd-subadditivity}
\newcommand{\wweaksubadditive}[1][w]{$#1$\nbd-weak-subadditive}




\theoremstyle{plain}




\captionsetup[subfigure]{margin=0pt, parskip=0pt, hangindent=0pt, indention=0pt,
labelformat=parens, labelfont=rm}


\begin{document}


\title[Optimality of the Width-$w$ Non-adjacent Form]{Optimality of the
  Width-$w$ Non-adjacent Form: General Characterisation and the Case of Imaginary Quadratic Bases}

\author{Clemens Heuberger}

\address{\parbox{12cm}{%
    Clemens Heuberger \\
    Institute of Optimisation and Discrete Mathematics (Math B) \\
    Graz University of Technology \\
    Steyrergasse 30/II, A-8010 Graz, Austria \\}} 

\email{\href{mailto:clemens.heuberger@tugraz.at}{clemens.heuberger@tugraz.at}}

\author{Daniel Krenn}

\thanks{The authors are supported by the Austrian Science Fund (FWF):
  W1230, Doctoral Program ``Discrete Mathematics''.}

\address{\parbox{12cm}{%
    Daniel Krenn \\
    Institute of Optimisation and Discrete Mathematics (Math B) \\
    Graz University of Technology \\
    Steyrergasse 30/II, A-8010 Graz, Austria \\}} 

\email{\href{mailto:mail@danielkrenn.at}{mail@danielkrenn.at} \textit{or}
  \href{mailto:krenn@math.tugraz.at}{krenn@math.tugraz.at}}

\keywords{$\tau$-adic expansions, width-$w$ non-adjacent forms, redundant digit sets,
  elliptic curve cryptography, Koblitz curves, Frobenius endomorphism, scalar
  multiplication, Hamming weight, optimality, imaginary quadratic bases}

\subjclass[2010]{11A63; 94A60}


\begin{abstract}
  Efficient scalar multiplication in Abelian groups (which is an important
  operation in public key cryptography) can be performed using digital
  expansions. Apart from rational integer bases (double-and-add algorithm),
  imaginary quadratic integer bases are of interest for elliptic curve
  cryptography, because the Frobenius endomorphism fulfils a quadratic
  equation. One strategy for improving the efficiency is to increase the digit
  set (at the prize of additional precomputations). A common choice is the
  width\nbd-$w$ non-adjacent form (\wNAF): each block of $w$ consecutive
  digits contains at most one non-zero digit. Heuristically, this ensures a low
  weight, i.e.\ number of non-zero digits, which translates in few costly curve
  operations. This paper investigates the following question: Is the
  \wNAF{}-expansion optimal, where optimality means minimising the weight over
  all possible expansions with the same digit set?

  The main characterisation of optimality of \wNAF{}s can be formulated in the
  following more general setting: We consider an Abelian group together with an
  endomorphism (e.g., multiplication by a base element in a ring) and a finite
  digit set.  We show that each group element has an optimal \wNAF{}-expansion
  if and only if this is the case for each sum of two expansions of weight $1$.
  This leads both to an algorithmic criterion and to generic answers for
  various cases.

  Imaginary quadratic integers of trace at least $3$ (in absolute value) have
  optimal \wNAF{}s for $w\ge 4$. The same holds for the special case of base
  $(\pm 3\pm\sqrt{-3})/2$ and $w\ge 2$, which corresponds to Koblitz curves in
  characteristic three.  In the case of $\tau=\pm1\pm i$, optimality depends on
  the parity of $w$.  Computational results for small trace are given.

\end{abstract}


\maketitle


\section{Introduction}
\label{sec:intro}


Let $\tau$ be an imaginary quadratic algebraic integer. We consider \tauadic{}
(multi\nbd-)expansions for an element of $\Ztau$ using a redundant digit set,
i.e.\ our expansions need not be unique without any further constraints. The
question that arises is how to find ``good'' representations. This problem
comes from elliptic curve cryptography, where one is interested in expansions
leading to efficient calculation schemes.

A scalar multiplication, one of the key operations in in elliptic curve
cryptosystems, can be carried out by a double-and-algorithm or by using the
Frobenius endomorphism (``Frobenius-and-add''), cf.\
Koblitz~\cite{Koblitz:1992:cm} and
Solinas~\cite{Solinas:1997:improved-algorithm,Solinas:2000:effic-koblit}: The
Frobenius endomorphism $\varphi$ fulfils a quadratic equation
$\varphi^2-p\varphi+q=0$ for integers $p$ and $q$ depending on the curve. We
identify $\varphi$ with the complex root $\tau$ of the same equation. We
represent a scalar $n$ as $n=\sum_{j=0}^{\ell}\eta_j\tau^j$ for $\eta_j$ from
some suitable digit set $\cD$. Then the scalar multiplication $nP$ for some $P$
on the curve can be computed as $nP=\sum_{j=0}^{\ell}\eta_j\varphi^j(P)$. The
latter sum can be efficiently computed using Horner's scheme, where the $\eta
P$ for $\eta\in\cD$ have to be pre-computed.  The number of applications of
$\varphi$ (which is computationally cheap) corresponds to the length of the
expansion, the number of additions corresponds to the weight of the expansion,
i.e.\ the number of non-zero digits $\eta_j$.

Some of the results of this article do not depend on the setting in an
imaginary quadratic number field. Those are valid in the following more general
(abstract) setting and may be used in other situations as well.

A general number system is an Abelian group $\cA$ together with a group
endomorphism $\Phi$ and a digit set $\cD$, which is a finite subset of $\cA$
including $0$. The endomorphism acts as base in our number system. A common
choice is multiplication by a fixed element. In this general setting we
consider \emph{multi-expansions}, which are simply finite sums with summands
$\f{\Phi^k}{d}$, where $k\in\N_0$ and $d$ a non-zero digit in $\cD$. The
``multi'' in the expression ``multi-expansion'' means that we allow several
summands with the same~$k$. If the $k$ are pairwise distinct, we call the sum
an \emph{expansion}. Note that in the context of the Frobenius-and-add
algorithm, multi-expansions are as good as expansions, as long as the weight is
low.

A special expansion is the \emph{width\nbd-$w$ non-adjacent form}, or
\emph{\wNAF{}} for short. It will be the main concept throughout this
article. In a \wNAF{}-expansion, the $k$ in different summands differ at least
by a fixed positive rational integer~$w$, i.e.\ considering an expansion as a
sequence $\sequence{\eta_j}_{j\in\N_0} \in \cD^{\N_0}$ (or alternatively as
finite word over the alphabet $\cD$), each block $\eta_{j+w-1}\ldots\eta_{j}$
of width~$w$ contains at most one non-zero digit. The term ``non-adjacent
form'' at least goes back to Reitwiesner~\cite{Reitwiesner:1960}. More details
and precise definitions of those terms can be found in
Section~\ref{sec:exp-num-sys}.

Obviously, a \wNAF{} has low weight and therefore leads to quite efficient
scalar multiplication in the Frobenius-and-add method. The main question
investigated in this article is: Does the
\wNAF{} minimise the weight, i.e.\ the number of non-zero digits, among
all possible representations (multi-expansions) with the same digit set? If
the answer is affirmative, we call the \wNAF{}-expansion \emph{optimal}.

We give conditions equivalent to optimality in
Section~\ref{sec:abstract-optimality} in the setting of general number
systems. We show that each group element has an optimal \wNAF{}-expansion if
and only if the digit set is ``\wsubadditive{}'', which means that each
multi-expansion with two summands has a \wNAF{}-expansion with weight at
most~$2$. This condition can be verified algorithmically, since there are only
finitely many non-trivial cases to check. More precisely, one has to consider
the \wNAF{}s corresponding to $w (\card* \cD - 1)^2$ multi-expansions. Another
way to verify \wsubadditivity{} is to use the geometry of the digit
set. This is done in our imaginary quadratic setting, see below.

Now consider some special cases of number systems, where optimality or
non-optimality of the non-adjacent form is already known. Here, multiplication
by a base element is chosen as endomorphism $\Phi$. In the case of \wNAF[2]{}s
with digit set $\set{-1,0,1}$ and base~$2$, optimality is known, cf.\
Reitwiesner~\cite{Reitwiesner:1960}. This was reproved in Jedwab and
Mitchell~\cite{Jedwab-Mitchell:1989} and in Gordon~\cite{Gordon:1998}. That
result was generalised in Avanzi~\cite{avanzi:mywnaf}, Muir and
Stinson~\cite{muirstinson:minimality} and in Phillips and
Burgess~\cite{Phillips-Burgess:2004:minim-weigh}. There, the optimality of the
\wNAF{}s with base~$2$ was shown. As digit set, zero and all odd numbers with
absolute value less than $2^{w-1}$ were used. In this setting, there is also
another optimal expansion, cf.\ Muir and
Stinson~\cite{Muir-Stinson:2005:new-minim}. Using base~$2$ and a digit set
$\set{0,1,x}$ with $x\in\Z$, optimality of the \wNAF[2]{}s is answered in
Heuberger and Prodinger~\cite{Heuberger-Prodinger:2006:analy-alter}. Some of
these results will be reproved and extended to arbitrary rational integer bases
with our tools in Section~\ref{sec:opt-base-2}. That proof will show the main
idea how to use the geometry of the digit set to show \wsubadditivity{} and
therefore optimality.

We come back to our imaginary quadratic setting, so suppose that the imaginary
quadratic base~$\tau$ is a solution of $\tau^2 - p \tau + q = 0$, where $p$ and
$q$ are rational integers with $q>p^2/4$. Here, $\Ztau$ plays the r\^ole of the
group and multiplication by $\tau$ is taken as the endomorphism. We
suppose that the digit set consists of $0$ and one representative of minimal
norm of every residue class modulo $\tau^w$, which is not divisible by $\tau$,
and we call it a \emph{minimal norm representatives digit set}. It can be shown
that, taking such a digit set, every element of $\Ztau$ admits a unique
\wNAF{}, cf.\ Blake, Kumar Murty and Xu~\cite{Blake-Murty-Xu:ta:nonad-radix,
  Blake-Murty-Xu:2005:naf, Blake-Kumar-Xu:2005:effic-algor},
Koblitz~\cite{Koblitz:1998:ellip-curve} and
Solinas~\cite{Solinas:1997:improved-algorithm, Solinas:2000:effic-koblit} for
some special cases or Heuberger and
Krenn~\cite{Heuberger-Krenn:2010:wnaf-analysis} for a general result. All those
definitions and basics can be found in Section~\ref{sec:digit-sets-iq-bases} in
a precise formulation.

First, consider the cases $\abs{p}=1$ and $q=2$, which comes from a Koblitz
curve in characteristic~$2$, cf.\ Koblitz~\cite{Koblitz:1992:cm}, Meier and
Staffelbach~\cite{Meier-Staffelbach:1993:effic}, and
Solinas~\cite{Solinas:1997:improved-algorithm,
  Solinas:2000:effic-koblit}. There optimality of the \wNAF{}s can be shown for
$w\in\set{2,3}$, cf.\ Avanzi, Heuberger and
Prodinger~\cite{Avanzi-Heuberger-Prodinger:2006:minim-hammin,
  Avanzi-Heuberger-Prodinger:2006:scalar-multip-koblit-curves}. The case $w=2$
can also be found in Gordon~\cite{Gordon:1998}. For the cases
$w\in\set{4,5,6}$, non-optimality was shown, see
Heuberger~\cite{Heuberger:2010:nonoptimality}.

In the present paper we give a general result on the optimality of the \wNAF{}s
with imaginary quadratic bases, namely when $\abs{p}\geq3$, as well as some
results for special cases. So let $\abs{p}\geq3$. If $w\geq4$, then optimality
of the \wNAF{}s could be shown in all cases. If we restrict to $\abs{p}\geq5$,
then the \wNAF{}s are already optimal for $w\geq3$. Further, we give a
condition --- $p$ and $q$ have to fulfil a special inequality --- given, when
\wNAF[2]{}s are optimal. All those results can be found in
Section~\ref{sec:opt:i-q}. There we show that the digit set in that cases is
\wsubadditive{} by using its geometry.

In the last four sections some special cases are examined. Important ones are
the cases $\abs{p}=3$ and $q=3$ coming from Koblitz curves in
characteristic~$3$.  In Kröll~\cite{Kroell:ta:optim-of} optimality of the
\wNAF{}s was shown for $w\in\set{2,3,4,5,6,7}$ by using a transducer and some
heavy symbolic computations. In this article we prove that the
\wNAF{}-expansions are optimal for all $w\geq2$, see
Section~\ref{sec:case-p-3-q-3}. In Section~\ref{sec:case-p-2-q-2} we look at
the cases $\abs{p}=2$ and $q=2$. There the \wNAF{}-expansions are optimal if
and only if $w$ is odd. In the cases $p=0$ and $q\geq2$, see
Section~\ref{sec:case-p-0}, non-optimality of the \wNAF{}s with odd $w$ could
be shown.


\section{Expansions and Number Systems}
\label{sec:exp-num-sys}


This section contains the abstract definition of number systems and the
definition of expansions. Further, we specify the width-$w$ non-adjacent
form and notions related to it.

Abstract number systems can be found in van de
Woestijne~\cite{Woestijne:2009:struc-of}, which are generalisations of the
number systems used, for example, in Germ\'an and
Kov\'acs~\cite{German-Kovacs:2007:number-system-const}. We use that concept to
define \wNAF{}-number systems.


\begin{definition}
  A \emph{pre-number system} is a triple $(\cA,\Phi,\cD)$ where $\cA$ is an
  Abelian group, $\Phi$ an endomorphism of $\cA$ and the \emph{digit set}
  $\cD$ is a subset of $\cA$ such that $0\in\cD$ and each non-zero digit is not
  in the image of $\Phi$.
\end{definition}


Note that we can assume $\Phi$ is not surjective, because otherwise the digit
set would only consist of $0$.


Before we define expansions and multi-expansions, we give a short introduction
on multisets. We take the notation used, for example, in
Knuth~\cite{Knuth:1998:Art:2}.


\begin{notation}
  A \emph{multiset} is like a set, but identical elements are allowed to appear
  more than once. For a multiset $A$, its cardinality $\card*{A}$ is the number
  of elements in the multiset. For multisets $A$ and $B$, we define new
  multisets $A \uplus B$ and $A \setminus B$ in the following way: If an element
  occurs exactly $a$ times in $A$ and $b$ times in $B$, then it occurs exactly
  $a+b$ times in $A \uplus B$ and it occurs exactly $\f{\max}{a-b,0}$ times in
  $A \setminus B$.
\end{notation}


Now a pre-number system (and multisets) can be used to define what
expansions and multi-expansions are.


\begin{definition}[Expansion]\label{def:expansion}
  Let $(\cA,\Phi,\cD)$ be a pre-number system, and let $\bfmu$ be a multiset
  with elements $(d,n) \in (\cD\setminus\set{0}) \times \N_0$. We define the
  following:
  \begin{enumerate}
  \item We set
    \begin{equation*}
      \expweight{\bfmu} := \card*{\bfmu}
    \end{equation*}
    and call it the \emph{Hamming-weight of $\bfmu$} or simply \emph{weight of
      $\bfmu$}. The multiset $\bfeta$ is called \emph{finite}, if its weight is
    finite.

  \item We call an element $(d,n)\in\bfmu$ a \emph{singleton} and
    $\f{\Phi^n}{d}$ the \emph{value of the singleton $(d,n)$}.

  \item Let $\bfmu$ be finite. We call
    \begin{equation*}
      \expvalue{\bfmu} := \sum_{(d,n)\in\bfmu} \f{\Phi^n}{d}
    \end{equation*}
    the \emph{value of $\bfmu$}.

  \item Let $z\in\cA$. A \emph{multi-expansion of $z$} is a finite $\bfmu$
    with $\expvalue{\bfmu} = z$.

  \item Let $z\in\cA$. An \emph{expansion of $z$} is a multi-expansion $\bfmu$
    of $z$ where all the $n$ in $(d,n)\in\bfmu$ are pairwise distinct.
  \end{enumerate}
\end{definition}


We use the following conventions and notations. If necessary, we see a
singleton as a multi-expansion or an expansion of weight~$1$. We identify an
expansion $\bfeta$ with the sequence $\sequence{\eta_n}_{n\in\N_0} \in
\cD^{\N_0}$, where $\eta_n = d$ for $(d,n) \in \bfeta$ and all other
$\eta_n=0$. For an expansion $\bfeta$ (usually a bold, lower case Greek letter)
we will use $\eta_n$ (the same letter, but indexed and not bold) for the
elements of the sequence. Further, we identify expansions (sequences) in
$\cD^{\N_0}$ with finite words over the alphabet $\cD$ written from right
(least significant digit) to left (most significant digit),
except left-trailing zeros, which are usually skipped.


Note, if $\bfeta$ is an expansion, then the weight of $\bfeta$ is
\begin{equation*}
  \expweight{\bfeta} = \card*{\set*{n\in\N_0}{\eta_n\neq0}}
\end{equation*}
and the value of $\bfeta$ is
\begin{equation*}
  \expvalue{\bfeta} = \sum_{n\in\N_0} \f{\Phi^n}{\eta_n}.
\end{equation*}


For the sake of completeness --- although we do not need it in this paper --- a
pre-number system is called \emph{number system} if each element of $\cA$ has
an expansion. We call the number system \emph{non-redundant} if there is
exactly one expansion for each element of $\cA$, otherwise we call it
\emph{redundant}. We will modify this definition later for \wNAF{} number
systems.


Before going any further, we want to see some simple examples for the given
abstract definition of a number system. We use multiplication by an
element~$\tau$ as endomorphism~$\Phi$. This leads to values of the type
\begin{equation*}
  \expvalue{\bfeta} = \sum_{n\in\N_0} \eta_n\tau^n
\end{equation*}
for an expansion $\bfeta$.


\begin{example}
  The binary number system is the pre-number system
  \begin{equation*}
    (\N_0,z\mapsto 2z,\set{0,1}).
  \end{equation*}
  It is a non-redundant number system, since each integer admits exactly one
  binary expansion. We can extend the binary number system to the pre-number
  system
  \begin{equation*}
    (\Z,z\mapsto 2z,\set{-1,0,1}),
  \end{equation*}
  which is a redundant number system.
\end{example}


In order to get a non-redundant number system out of a redundant one, one can
restrict the language, i.e.\ we forbid some special configurations in an
expansion. There is one special kind of expansion, namely the non-adjacent
form, where no adjacent non-zeros are allowed. A generalisation of it is
defined here.


\begin{definition}[Width-$w$ Non-Adjacent Form]\label{def:wnaf}
  Let $w$ be a positive integer and $\cD$ be a digit set (coming from a
  pre-number system). Let $\bfeta = \sequence{\eta_j}_{j\in\N_0} \in
  \cD^{\N_0}$. The sequence $\bfeta$ is called a \emph{width\nbd-$w$
    non-adjacent form}, or \emph{\wNAF{}} for short, if each factor
  $\eta_{j+w-1}\ldots\eta_j$, i.e.\ each block of length $w$, contains at most
  one non-zero digit.

  A \wNAF{}-expansion is an expansion that is also a \wNAF{}.
\end{definition}


Note that a \wNAF{}-expansion is finite. With the previous definition we can now
define what a \wNAF{} number system is.


\begin{definition}
  Let $w$ be a positive integer. A pre-number system $(\cA,\Phi,\cD)$ is called
  a \emph{\wNAF{} number system} if each element of $\cA$ admits a
  \wNAF{}-expansion, i.e.\ for each $z\in\cA$ there is a \wNAF{} $\bfeta \in
  \cD^{\N_0}$ with $\expvalue{\bfeta} = z$. We call a \wNAF{} number system
  \emph{non-redundant} if each element of $\cA$ has a unique \wNAF{}-expansion,
  otherwise we call it \emph{redundant}.
\end{definition}


Now we continue the example started above.


\begin{example}\label{ex:wNAF-base-2}

  The redundant number system
  \begin{equation*}
    (\Z,z\mapsto 2z,\set{-1,0,1})
  \end{equation*}
  is a non-redundant \wNAF[2]{} number system. This fact has been shown in
  Reitwiesner~\cite{Reitwiesner:1960}. More generally, for an integer~$w$ at
  least~$2$, the number system
  \begin{equation*}
    (\Z,z\mapsto 2z,\cD), 
  \end{equation*}
  where the digit set $\cD$ consists of $0$ and all odd integers with absolute
  value smaller than $2^{w-1}$, is a non-redundant \wNAF{} number system, cf.\
  Solinas~\cite{Solinas:1997:improved-algorithm, Solinas:2000:effic-koblit} or
  Muir and Stinson~\cite{muirstinson:minimality}.
\end{example}


Finally, since this paper deals with the optimality of expansions, we have to
define the term ``optimal''. This is done in the following definition.


\begin{definition}[Optimal Expansion]\label{def:optimal}
  Let $(\cA,\Phi,\cD)$ be a pre-number system, and let $z\in\cA$. A
  multi-expansion or an expansion $\bfmu$ of $z$ is called \emph{optimal} if
  for any multi-expansion $\bfnu$ of $z$ we have
  \begin{equation*}
    \expweight{\bfmu} \leq \expweight{\bfnu},
  \end{equation*}
  i.e.\ $\bfmu$ minimises the Hamming-weight among all multi-expansions of
  $z$. Otherwise $\bfmu$ is called \emph{non-optimal}.
\end{definition}

The ``usual'' definition of optimal, cf.\ \cite{Reitwiesner:1960,
  Jedwab-Mitchell:1989, Gordon:1998, avanzi:mywnaf, muirstinson:minimality,
  Phillips-Burgess:2004:minim-weigh, Muir-Stinson:2005:new-minim,
  Heuberger-Prodinger:2006:analy-alter,
  Avanzi-Heuberger-Prodinger:2006:minim-hammin,
  Avanzi-Heuberger-Prodinger:2006:scalar-multip-koblit-curves,
  Heuberger:2010:nonoptimality}, is more restrictive: An expansion of $z\in\cA$
is optimal if it minimises the weight among all expansions of $z$. The
difference is that in Definition~\ref{def:optimal} we minimise over all
multi-expansions. Using (multi-)expansions come from an application: we want to
do efficient operations. There it is no problem to take multi-expansions if
they are ``better'', so it is more natural to minimise over all of them instead
of just over all expansions.


\section{The Optimality Result}
\label{sec:abstract-optimality}


This section contains our main theorem, the Optimality Theorem,
Theorem~\ref{thm:optimality-thm}. It contains four equivalences. One of it is a
condition on the digit set, one is optimality of the \wNAF{}. We start with the
definition of that condition on the digit set.


\begin{definition}
  Let $(\cA,\Phi,\cD)$ be a pre-number system, and let $w$ be a positive
  integer. We say that the digit set $\cD$ is \emph{\wsubadditive{}} if the sum
  of the values of two singletons has a \wNAF{}-expansion of weight at
  most~$2$.
\end{definition}


In order to verify the \wsubadditivity{}-condition it is enough to check
singletons $(c,0)$ and $(d,n)$ with $n\in\set{0,\dots,w-1}$ and non-zero
digits~$c$ and~$d$. Therefore, one has to consider $w \left( \card* \cD - 1
\right)^2$ multi-expansions.


\begin{theorem}[Optimality Theorem]\label{thm:optimality-thm}
  Let $(\cA,\Phi,\cD)$ be a pre-number system with
  \begin{equation*}
    \bigcap_{m\in\N_0} \f{\Phi^m}{\cA} = \set{0},
  \end{equation*}
  and let $w$ be a positive integer. Then the following statements are
  equivalent:
  \begin{enumequivalences}
  \item\label{enu:thm-add:subadditive} The digit set $\cD$ is \wsubadditive{}.
  \item\label{enu:thm-add:singletons} For all multi-expansions $\bfmu$ there is
    a \wNAF{}-expansion $\bfxi$ such that
    \begin{equation*}
      \expvalue{\bfxi} = \expvalue{\bfmu}
    \end{equation*}
    and
    \begin{equation*}
      \expweight{\bfxi} \leq \expweight{\bfmu}.
    \end{equation*}
  \item\label{enu:thm-add:wNAFs} For all \wNAF{}-expansions $\bfeta$ and
    $\bfvartheta$ there is a \wNAF{}-expansion $\bfxi$ such that
    \begin{equation*}
      \expvalue{\bfxi} = \expvalue{\bfeta} + \expvalue{\bfvartheta}
    \end{equation*}
    and
    \begin{equation*}
      \expweight{\bfxi} \leq \expweight{\bfeta} + \expweight{\bfvartheta}.
    \end{equation*}
  \item\label{enu:thm-add:optimal} If $z\in\cA$ admits a multi-expansion, then
    $z$ also admits an optimal \wNAF{}-expansion.
  \end{enumequivalences}
\end{theorem}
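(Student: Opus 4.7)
I will establish the equivalences via the cycle \itemref{enu:thm-add:subadditive}$\Rightarrow$\itemref{enu:thm-add:singletons}$\Rightarrow$\itemref{enu:thm-add:wNAFs}$\Rightarrow$\itemref{enu:thm-add:subadditive}, supplemented by \itemref{enu:thm-add:singletons}$\Rightarrow$\itemref{enu:thm-add:optimal}$\Rightarrow$\itemref{enu:thm-add:subadditive}. All implications but the first are short. For \itemref{enu:thm-add:singletons}$\Rightarrow$\itemref{enu:thm-add:wNAFs}, observe that $\bfeta\uplus\bfvartheta$ is itself a multi-expansion of $\expvalue{\bfeta}+\expvalue{\bfvartheta}$ of weight $\expweight{\bfeta}+\expweight{\bfvartheta}$, so \itemref{enu:thm-add:singletons} supplies the required \wNAF{}. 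For \itemref{enu:thm-add:wNAFs}$\Rightarrow$\itemref{enu:thm-add:subadditive}, singletons are themselves \wNAF{}s of weight~$1$. For \itemref{enu:thm-add:singletons}$\Rightarrow$\itemref{enu:thm-add:optimal}, pick a multi-expansion of $z$ of minimum weight $n$ (which exists since weights lie in $\N_0$); the \wNAF{}-expansion provided by \itemref{enu:thm-add:singletons} has weight $\leq n$, but being itself a multi-expansion of $z$, its weight is exactly $n$ and it is optimal. For \itemref{enu:thm-add:optimal}$\Rightarrow$\itemref{enu:thm-add:subadditive}, the two-singleton multi-expansion has weight $2$, so the optimal \wNAF{}-expansion guaranteed by \itemref{enu:thm-add:optimal} has weight at most $2$.

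The substantive step is \itemref{enu:thm-add:subadditive}$\Rightarrow$\itemref{enu:thm-add:singletons}. I would argue by induction on $n=\expweight{\bfmu}$, with base cases $n\leq 1$ trivial. For $n\geq 2$, if $\bfmu$ is already a \wNAF{}, we are done; otherwise pick two conflicting singletons $(c,k),(d,\ell)\in\bfmu$ with $k\leq\ell\leq k+w-1$, choosing $k$ minimal among all positions involved in any conflict. By \itemref{enu:thm-add:subadditive} applied to $c+\f{\Phi^{\ell-k}}{d}$ there is a \wNAF{} $\bfxi$ of weight at most $2$ representing this value, and shifting the positions of $\bfxi$ by $k$ produces a \wNAF{}-expansion of $\f{\Phi^k}{c}+\f{\Phi^\ell}{d}$. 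Replace $(c,k)$ and $(d,\ell)$ in $\bfmu$ by those shifted singletons to get a multi-expansion $\bfmu'$ of the same value and weight at most $n$. When $\expweight{\bfxi}\leq 1$ the weight strictly drops and the inductive hypothesis closes the case.

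The delicate case is $\expweight{\bfxi}=2$, where the weight is preserved. Two structural observations keep the argument alive. First, by minimality of $k$, $\bfmu$ has no singletons at positions in $[k-w+1,k-1]$ (such a singleton would form a conflict at a position strictly below~$k$), so the part of $\bfmu$ at positions $<k$ is already a \wNAF{} and is separated from the replacement region by at least~$w$. Second, whenever $\ell>k$, the value $c+\f{\Phi^{\ell-k}}{d}$ lies outside $\f{\Phi}{\cA}$ (because $c\in\cD\setminus\set{0}$ is not in $\f{\Phi}{\cA}$, while $\f{\Phi^{\ell-k}}{d}$ is), which forces the lower singleton of $\bfxi$ to sit at position~$0$; after shifting, exactly one new singleton lies at position $k$ and the other at position $\geq k+w$, so the window $[k,k+w-1]$ loses at least one singleton. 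An analogous check covers $\ell=k$. Thus the number of singletons in the window at the minimum conflict position strictly decreases with each bad step while the minimum itself never decreases; once it strictly increases, the hypothesis $\bigcap_{m\in\N_0}\f{\Phi^m}{\cA}=\set{0}$ caps how far it can travel (via the $\Phi$-valuation of the high-part value), yielding termination at a \wNAF{}-expansion of weight at most $n$. The main obstacle is precisely this termination proof for $\expweight{\bfxi}=2$: a plain induction on~$n$ is insufficient, and one must combine the lexicographic measure (minimum conflict position, window-count) with the intersection hypothesis to guarantee the procedure halts.
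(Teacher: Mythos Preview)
Your treatment of the short implications \itemref{enu:thm-add:singletons}$\Rightarrow$\itemref{enu:thm-add:wNAFs}$\Rightarrow$\itemref{enu:thm-add:subadditive} and \itemref{enu:thm-add:singletons}$\Leftrightarrow$\itemref{enu:thm-add:optimal} is correct and coincides with the paper's.

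For \itemref{enu:thm-add:subadditive}$\Rightarrow$\itemref{enu:thm-add:singletons} your conflict-resolution rewriting is a genuinely different route from the paper, and the termination sketch has a real gap. You invoke ``the $\Phi$-valuation of the high-part value'' to cap the minimum conflict position, but that value is not constant along the process: every time the minimum conflict position rises, further singletons may be absorbed into the low part, so the high-part value and its $\Phi$-valuation change with it. There is therefore no single finite $L(h)$ available to bound the sequence $k_t$. The argument can be salvaged, but only via an observation you do not state: the low part is monotone (a singleton strictly below the current minimum conflict position is never touched again) and has cardinality at most $n$, hence stabilises to some \wNAF{} $L_\infty$; from then on the high-part value is a fixed element $h$ lying in $\Phi^{k_t}(\cA)$ for $k_t\to\infty$, forcing $h=0$ by the intersection hypothesis, and $L_\infty$ is already a \wNAF{} of $z$ of weight at most $n-2$. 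Without this stabilisation step, the appeal to the intersection hypothesis does not go through.

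The paper bypasses any rewriting-termination issue by a different inductive scheme. It inducts lexicographically on the pair $\bigl(\expweight{\bfmu},\f{L}{\expvalue{\bfmu}}\bigr)$ with $\f{L}{z}=\max\set*{m\in\N_0}{z\in\f{\Phi^m}{\cA}}$, peels off a singleton of minimum index (shifting so that this index becomes $0$, which strictly decreases $L$), applies the hypothesis to the remaining multi-expansion of strictly smaller weight to obtain a \wNAF{} $\bfeta$, and then reinserts the removed digit $d$. At most one conflict remains, between $d$ and the least nonzero digit of $\bfeta$; \wsubadditivity{} resolves it, and a short case split on whether the resulting weight-two \wNAF{} has its bottom digit at position $0$ feeds directly back into the induction (on weight in the first case, on $L$ in the second). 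Well-foundedness is thus automatic, with no iterative process to control.
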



Note that if we assume that each element $\cA$ has at least one expansion (e.g.\
by assuming that we have a \wNAF{} number system), then we have the equivalence
of \wsubadditivity{} of the digit set and the existence of an optimal
\wNAF{}-expansion for each group element.


We will use the term ``addition'' in the following way: The addition of two
group elements $x$ and $y$ means finding a \wNAF{}-expansion of the sum
$x+y$. Addition of two multi-expansions shall mean addition of their values.


\begin{proof}[Proof of Theorem~\ref{thm:optimality-thm}]
  For a non-zero $z\in\cA$, we define
  \begin{equation*}
    \f{L}{z} := \max\set*{m\in \N_0}{z\in \f{\Phi^m}{\cA}}.
  \end{equation*}
  The function $L$ is well-defined, because
  \begin{equation*}
    \bigcap_{m\in\N_0} \f{\Phi^m}{\cA} 
    = \set{0}.
  \end{equation*}  
  We show that \textit{(\ref{enu:thm-add:subadditive})} implies
  \textit{(\ref{enu:thm-add:singletons})} by induction on the pair
  $(\expweight{\bfmu}, \f{L}{\expvalue{\bfmu}})$ for the
  multi-expansion~$\bfmu$. The order on that pairs is lexicographically. In the
  case $\expvalue{\bfmu}=0$, we choose $\bfxi=0$ and are finished. Further, if
  the multi-expansion $\bfmu$ consists of less than two elements, then there is
  nothing to do, so we suppose $\expweight{\bfeta} \geq 2$.

  We choose a singleton $(d,n)\in\bfmu$ (note that $d\in\cD\setminus\set{0}$
  and $n\in\N_0$) with minimal $n$ and set $\bfmu^\bfstar := \bfmu \setminus
  \set{(d,n)}$. If $n>0$, then we consider the multi-expansion $\bfmu'$ arising
  from $\bfmu$ by shifting all indices by $n$, use the induction hypothesis on
  $\bfmu'$ and apply $\Phi^n$. Note that $\bfmu'$ and $\bfmu$ have the same
  weight, but
  \begin{equation*}
    \f{L}{\expvalue{\bfmu'}}=\f{L}{\expvalue{\bfmu}}-n<\f{L}{\expvalue{\bfmu}}.
  \end{equation*}
  So we can assume $n=0$. Using the induction hypothesis, there is a
  \wNAF{}-expansion $\bfeta$ of $\expvalue{\bfmu^\bfstar}$ with weight strictly
  smaller than $\expweight{\bfmu}$.

  Consider the addition of $\bfeta$ and the digit $d$. If the digits
  $\eta_\ell$ are zero for all $\ell\in\set{0,\dots,w-1}$ , then the result
  follows by setting $\bfxi = \ldots\eta_{w+1}\eta_w0^{w-1}d$. So we can assume
  \begin{equation*}
    \bfeta = \bfbeta 0^{w-k-1} b 0^k
  \end{equation*}
  with a \wNAF{} $\bfbeta$, a digit $b\neq0$ and $k\in\set{0,\dots,w-1}$.
  Since the digit set $\cD$ is \wsubadditive{}, there is a \wNAF{} $\bfgamma$ of
  $\f{\Phi^k}{b}+d$ with weight at most~$2$. If the weight is strictly smaller
  than~$2$, we use the induction hypothesis on the multi-expansion $\bfbeta 0^w
  \uplus \bfgamma$ to get a \wNAF{} $\bfxi$ with the desired properties and are
  done. Otherwise, denoting by $J$ the smallest index with $\gamma_J\neq0$,
  we distinguish between two cases: $J = 0$ and $J>0$.

  First let $J=0$. The \wNAF{} $\bfbeta$ (seen as multi-expansion) has a weight
  less than $\bfeta$, so, by induction hypothesis, there is a \wNAF{} $\bfxi'$
  with
  \begin{equation*}
    \expvalue{\bfxi'} = 
    \expvalue{\bfbeta} + \expvalue{\ldots\gamma_{w+1}\gamma_w}
  \end{equation*}
  and
  \begin{equation*}
    \expweight{\bfxi'} 
    \leq \expweight{\bfbeta} + \expweight{\ldots\gamma_{w+1}\gamma_w}.
  \end{equation*}
  We set $\bfxi = \bfxi'\gamma_{w-1}\ldots\gamma_0$. Since $\bfxi$ is a
  \wNAF{}-expansion we are finished, because
  \begin{multline*}
    \expvalue{\bfxi} =
    \f{\Phi^w}{\expvalue{\bfbeta}} 
    + \f{\Phi^w}{\expvalue{\ldots\gamma_{w+1}\gamma_w}}
    + \expvalue{\gamma_{w-1}\ldots\gamma_0} \\
    = \f{\Phi^w}{\expvalue{\bfbeta}} + \f{\Phi^k}{b} + d
    = \expvalue{\bfeta} + d
    = \expvalue{\bfmu^\bfstar} + d
    = \expvalue{\bfmu}
  \end{multline*}
  and
  \begin{multline*}
    \expweight{\bfxi}
    = \expweight{\bfxi'} + \expweight{\gamma_{w-1}\ldots\gamma_0}
    \leq \expweight{\bfbeta} + \expweight{\bfgamma} \\
    \leq \expweight{\bfeta} + 1 
    \leq \expweight{\bfmu^\bfstar} + 1 
    = \expweight{\bfmu}.
  \end{multline*}

  Now, in the case $J>0$, we consider the multi-expansion $\bfnu :=\bfbeta 0^w
  \uplus \bfgamma$. We use the induction hypothesis for $\bfnu$ shifted by $J$
  (same weight, $L$ decreased by $J$) and apply $\Phi^J$ on the result.

  The proofs of the other implications of the four equivalences are simple. To
  show that \textit{(\ref{enu:thm-add:singletons})} implies
  \textit{(\ref{enu:thm-add:wNAFs})}, take $\bfmu := \bfeta \uplus \bfvartheta$,
  and \textit{(\ref{enu:thm-add:wNAFs})} implies
  \textit{(\ref{enu:thm-add:subadditive})} is the special case when $\bfeta$ and
  $\bfvartheta$ are singletons.

  Further, for \textit{(\ref{enu:thm-add:singletons})} implies
  \textit{(\ref{enu:thm-add:optimal})} take an optimal multi-expansion $\bfmu$
  (which exists, since $z$ admits at least one multi-expansion). We get a
  \wNAF{}-expansion $\bfxi$ with $\expweight{\bfxi} \leq
  \expweight{\bfmu}$. Since $\bfmu$ was optimal, equality is obtained in the
  previous inequality, and therefore $\bfxi$ is optimal, too. The
  converse, \textit{(\ref{enu:thm-add:optimal})} implies
  \textit{(\ref{enu:thm-add:singletons})}, follows using $z=\expvalue{\bfmu}$
  and the property that optimal expansions minimise the weight.
\end{proof}


\begin{proposition}\label{pro:suff-cond-wsubadditive}
  Let $(\cA,\Phi,\cD)$ be a pre-number system with
 \begin{equation*}
    \bigcap_{m\in\N_0} \f{\Phi^m}{\cA} = \set{0},
  \end{equation*}
  and let $w$ be a positive integer. We have the following sufficient
  condition: Suppose we have sets $U$ and $S$ such that $\cD \subseteq U$,
  $-\cD \subseteq U$, $U \subseteq \f{\Phi}{U}$ and all elements in $S$ are
  singletons. If $\cD$ contains a representative for each residue class modulo
  $\f{\Phi^w}{\cA}$ which is not contained in $\f{\Phi}{\cA}$ and
  \begin{equation}\label{eq:suff-cond-wsubadditive}
    \left( \f{\Phi^{w-1}}{U} + U + U \right)
    \cap \f{\Phi^w}{\cA} \subseteq S \cup \set{0},
  \end{equation}
  then the digit set $\cD$ is \wsubadditive{}.
\end{proposition}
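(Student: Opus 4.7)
The plan is to verify the \wsubadditivity{} of $\cD$ directly from the definition. Fix non-zero digits $c, d \in \cD$ and an integer $n \in \set{0,\dots,w-1}$; I need to produce a \wNAF{}-expansion of $z := c + \f{\Phi^n}{d}$ of weight at most $2$. The strategy mimics a greedy \wNAF{} algorithm on $z$: strip off all powers of $\Phi$ that divide $z$, extract one digit via the residue-class hypothesis, and then use~(\ref{eq:suff-cond-wsubadditive}) to argue that the remainder is either zero or the value of a single shifted digit.

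After disposing of the trivial case $z = 0$, set $k := \f{L}{z}$, well-defined as in the proof of Theorem~\ref{thm:optimality-thm}. A useful preliminary observation is that if $k \geq 1$ then $n = 0$: otherwise $z$ and $\f{\Phi^n}{d}$ both lie in $\f{\Phi}{\cA}$, forcing $c = z - \f{\Phi^n}{d} \in \f{\Phi}{\cA}$, which contradicts the fact that non-zero digits are not in the image of $\Phi$. The case $k \geq w$ is then immediate: here $z \in \f{\Phi^w}{\cA}$, and iterating $U \subseteq \f{\Phi}{U}$ yields $\f{\Phi^n}{U} \subseteq \f{\Phi^{w-1}}{U}$, whence $z \in U + \f{\Phi^{w-1}}{U} \subseteq \f{\Phi^{w-1}}{U} + U + U$; hypothesis~(\ref{eq:suff-cond-wsubadditive}) then places $z \in S$, giving a \wNAF{} of weight $1$.

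In the main range $0 \leq k \leq w-1$, write $z = \f{\Phi^k}{z'}$ with $z' \notin \f{\Phi}{\cA}$ (such $z'$ exists by maximality of $k$). The representative-class assumption supplies a digit $e \in \cD \setminus \set{0}$ with $z' \equiv e \pmod{\f{\Phi^w}{\cA}}$, so $z = \f{\Phi^k}{e} + \f{\Phi^{k+w}}{a}$ for some $a \in \cA$. The crucial step is to establish
\begin{equation*}
  \f{\Phi^{k+w}}{a} = c + \f{\Phi^n}{d} + \f{\Phi^k}{-e} \in \f{\Phi^{w-1}}{U} + U + U.
\end{equation*}
This uses $c, -e \in U$ (from $\cD, -\cD \subseteq U$) together with the preliminary observation: since at most one of $n, k$ is positive and both are at most $w-1$, exactly one of the summands $\f{\Phi^n}{d}$ and $\f{\Phi^k}{-e}$ is genuinely shifted, and its shift is at most $w-1$, so it sits in $\f{\Phi^{w-1}}{U}$, while the other two summands lie in $U$. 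Combined with $\f{\Phi^{k+w}}{a} \in \f{\Phi^w}{\cA}$, hypothesis~(\ref{eq:suff-cond-wsubadditive}) forces $\f{\Phi^{k+w}}{a} \in S \cup \set{0}$.

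To conclude: if $\f{\Phi^{k+w}}{a} = 0$ then $z = \f{\Phi^k}{e}$ is a weight-$1$ \wNAF{}. Otherwise $\f{\Phi^{k+w}}{a} = \f{\Phi^m}{f}$ for some $f \in \cD \setminus \set{0}$ and $m \geq 0$; since $f \notin \f{\Phi}{\cA}$ one has $\f{L}{\f{\Phi^m}{f}} = m$, whereas $\f{L}{\f{\Phi^{k+w}}{a}} \geq k + w$, so $m \geq k + w$. Hence $z = \f{\Phi^k}{e} + \f{\Phi^m}{f}$ is a \wNAF{}-expansion of weight $2$. The main obstacle---and the reason for the asymmetric shape $\f{\Phi^{w-1}}{U} + U + U$ in the hypothesis---is exactly the inclusion $z - \f{\Phi^k}{e} \in \f{\Phi^{w-1}}{U} + U + U$: the budget allows only a single $\f{\Phi^{w-1}}{U}$ factor, which is precisely what the preliminary observation (ruling out $n, k \geq 1$ simultaneously) secures.
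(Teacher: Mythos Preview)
Your proof is correct and follows essentially the same route as the paper: peel off the maximal power of $\Phi$ dividing $z$, extract one digit via the residue-class hypothesis, and then apply~(\ref{eq:suff-cond-wsubadditive}) to the remainder, the key observation being that $n$ and $k$ cannot both be positive (the paper phrases this as ``that case can only happen when $n=0$''). The paper's case organisation is slightly different (first $y\notin\f{\Phi}{\cA}$, then $k\geq1$ subdivided into $k\leq w-1$ and $k\geq w$), but the substance is identical; one cosmetic point: your phrase ``exactly one of the summands \ldots is genuinely shifted'' should read ``at most one'' to cover $n=k=0$, where all three summands lie in $U\subseteq\f{\Phi^{w-1}}{U}$ and the inclusion still holds.
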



Sometimes it is more convenient to use~\eqref{eq:suff-cond-wsubadditive} of
this proposition instead of the definition of \wsubadditive{}. For example, in
Section~\ref{sec:opt-base-2} all digits lie in an interval $U$ and
all integers in that interval $S=U$ have a \wNAF{} expansion with weight
at most~$1$. The same technique is used in the optimality result of
Section~\ref{sec:opt:i-q}.


\begin{proof}[Proof of Proposition~\ref{pro:suff-cond-wsubadditive}]
  Let $(c,0)$ and $(d,n)$ be singletons with $n\in\set{0,\dots,w-1}$ and
  consider $y = \expvalue{(c,0) \uplus (d,n)}$. If $y=0$, we have nothing to
  do, so we can assume $y\neq0$. First suppose $y\not\in\f{\Phi}{\cA}$. Because
  of our assumptions on $\cD$ there is a digit $a$ such that
  \begin{equation*}
    z := \f{\Phi^n}{d} + c - a \in \f{\Phi^w}{\cA}.
  \end{equation*}
  If $z$ is not zero, then, using our sufficient condition, there is a
  singleton $(b,m)$ with value~$z$, and we have $m \geq w$. The
  \wNAF{}-expansion $b0^{m-1}a$ does what we want.

  Now suppose $y\in\f{\Phi^k}{\cA}$ with a positive integer $k$, which is
  chosen maximally. That case can only happen when $n=0$. Since $y\neq0$ and
  our assumptions on $\cD$ there is a \wNAF{}-expansion of~$y$ with a singleton
  $(a,k)$ as least significant digit. If $k\in\set{0,\dots,w-1}$, then
  \begin{equation*}
    z := d + c -  \f{\Phi^k}{a} \in \f{\Phi^{w+k}}{\cA}.
  \end{equation*}
  Then a non-zero~$z$ is the value of a singleton $(b,m)$, $m \geq w+k$,
  because of~\eqref{eq:suff-cond-wsubadditive}, and we obtain a
  \wNAF{}-expansion of~$y$ with singletons $(b,m)$ and $(a,k)$. If $k \geq w$,
  then
  \begin{equation*}
    z := d + c \in \f{\Phi^{w+k}}{\cA}
  \end{equation*}
  and $z$ is the value of a singleton $(b,m)$
  by~\eqref{eq:suff-cond-wsubadditive}. We get a \wNAF{}-expansion~$b0^m$.
\end{proof}


Sometimes the \wsubadditivity{}-condition is a bit too strong, so we do net get
optimal \wNAF{}s. In that case one can check whether \wNAF[(w-1)]{}s are
optimal. This is stated in the following remark, where the
\wsubadditive{}-condition is weakened.


\begin{remark}\label{rem:wweaksubadditive}
  Suppose that we have the same setting as in
  Theorem~\ref{thm:optimality-thm}. We call the digit set
  \emph{\wweaksubadditive{}} if the sum of the values of two singletons $(c,m)$
  and $(d,n)$ with $\abs{m-n} \neq w-1$ has a \wNAF{}-expansion with weight at
  most~$2$.
  
  We get the following result: If the digit set $\cD$ is \wweaksubadditive{},
  then each element of $\cA$, which has at least one multi-expansion, has an
  optimal \wNAF[(w-1)]{}-expansion. The proof is similar to the proof of
  Theorem~\ref{thm:optimality-thm}, except that a ``rewriting'' only happens
  when we have a \wNAF[(w-1)]{}-violation.
\end{remark}


\section{Optimality for Integer Bases}
\label{sec:opt-base-2}


In this section we give a first application of the abstract optimality theorem
of the previous section. We reprove the optimality of the \wNAF{}s with a
minimal norm digit set and base~$2$. But the result is more general: We prove
optimality for all integer bases (with absolute value at least~$2$). This
demonstrates one basic idea how to check whether a digit set is \wsubadditive{}
or not.


Let $b$ be an integer with $\abs{b}\geq2$ and $w$ be an integer with $w\geq2$.
Consider the non-redundant \wNAF{} number system
\begin{equation*}
  (\Z,z\mapsto bz,\cD)
\end{equation*}
where the digit set $\cD$ consists of $0$ and all integers with absolute value
strictly smaller than $\frac12 \abs{b}^w$ and not divisible by~$b$. We
mentioned the special case base~$2$ of that number system in
Example~\ref{ex:wNAF-base-2}. See also Reitwiesner~\cite{Reitwiesner:1960} and
Solinas~\cite{Solinas:2000:effic-koblit}.


The following optimality result can be shown. For proofs of the base~$2$
setting cf.\ Reitwiesner~\cite{Reitwiesner:1960}, Jedwab and
Mitchell~\cite{Jedwab-Mitchell:1989}, Gordon~\cite{Gordon:1998},
Avanzi~\cite{avanzi:mywnaf}, Muir and Stinson~\cite{muirstinson:minimality},
and Phillips and Burgess~\cite{Phillips-Burgess:2004:minim-weigh}.


\begin{theorem}
  With the setting above, the \wNAF{}-expansion for each integer is optimal.
\end{theorem}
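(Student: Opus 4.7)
The plan is to invoke Theorem~\ref{thm:optimality-thm} to reduce the claimed optimality to the \wsubadditivity{} of the digit set~$\cD$, and then verify \wsubadditivity{} via the geometric sufficient condition of Proposition~\ref{pro:suff-cond-wsubadditive}, taking the auxiliary set $U$ to be a symmetric real interval. Since $\abs{b}\geq 2$, the intersection $\bigcap_{m\in\N_0} b^m\Z$ equals $\set{0}$, so both results are applicable.

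Concretely, I would set $U := (-\tfrac12\abs{b}^w,\tfrac12\abs{b}^w) \subseteq \R$ and let $S$ be the set of values of all singletons. The easy hypotheses of Proposition~\ref{pro:suff-cond-wsubadditive} then follow directly: $\cD\subseteq U$ and $-\cD\subseteq U$ by the definition of~$\cD$; the inclusion $U\subseteq bU$ holds because $\abs{b}\geq 2$; and every residue class of $\Z$ modulo $b^w$ which is not contained in $b\Z$ has a representative of minimal absolute value lying in $\cD$.

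The heart of the argument is the inclusion
\begin{equation*}
  \left(b^{w-1}U + U + U\right)\cap b^w\Z \;\subseteq\; S\cup\set{0}.
\end{equation*}
A direct computation gives $b^{w-1}U + U + U \subseteq \bigl(-\tfrac12\abs{b}^{2w-1}-\abs{b}^w,\,\tfrac12\abs{b}^{2w-1}+\abs{b}^w\bigr)$, so any element $kb^w$ of the intersection satisfies the integer inequality $\abs{k} < \tfrac12\abs{b}^{w-1}+1$. An elementary comparison shows that $\tfrac12\abs{b}^{w-1}+1 \leq \tfrac12\abs{b}^w$ is equivalent to $\abs{b}^{w-1}(\abs{b}-1)\geq 2$, which holds for all $\abs{b}\geq 2$ and $w\geq 2$. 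Hence $\abs{k} < \tfrac12\abs{b}^w$. Factoring out the $b$-adic valuation as $k = b^v k'$ with $v\geq 0$ and $b\nmid k'$ yields $\abs{k'}\leq\abs{k} < \tfrac12\abs{b}^w$, so $k'\in\cD$ and $kb^w = b^{w+v}k'$ is the value of the singleton $(k',w+v)$, establishing the inclusion.

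The main subtlety is that the geometric bound $\tfrac12\abs{b}^{w-1}+1$ coincides with the digit-defining bound $\tfrac12\abs{b}^w$ in the boundary case $\abs{b}=2$, $w=2$. Since $k$ is an integer and both inequalities are strict, the comparison nevertheless proceeds uniformly, and no separate case analysis on small $\abs{b}$ or $w$ is needed. The remaining implications of Theorem~\ref{thm:optimality-thm} then deliver the optimality of the \wNAF{}-expansion for every integer.
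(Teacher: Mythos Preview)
Your proposal is correct and follows essentially the same route as the paper: both invoke Proposition~\ref{pro:suff-cond-wsubadditive} with the symmetric interval of radius $\tfrac12\abs{b}^w$ playing the role of~$U$, bound $b^{w-1}U+U+U$ by $\tfrac12\abs{b}^{2w-1}+\abs{b}^w$ in absolute value, and conclude that after division by $b^w$ the result is a singleton value. Your write-up is in fact a bit more explicit than the paper's (which has a stray index~$k$ and conflates the bound $\abs{b}^{w-1}$ with the stated target $\tfrac12\abs{b}^w$), but the underlying argument is identical.
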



\begin{proof}
  We show that the digit set $\cD$ is \wsubadditive{} by verifying the
  sufficient condition of Proposition~\ref{pro:suff-cond-wsubadditive}. Then
  optimality follows from Theorem~\ref{thm:optimality-thm}. First, note that
  the \wNAF{}-expansion of each integer with absolute value at most
  $\abs{b}^{w-1}$ has weight at most~$1$, because either the integer is already
  a digit, or one can divide by a power of~$b$ to get a digit. Further, we have
  $\cD = -\cD$. Therefore it suffices to show that
  \begin{equation*}
    b^{-w} \left(b^{w-1}\cD + \cD + \cD\right) \subseteq 
    \intervalcc{-\tfrac12\abs{b}^w}{\tfrac12 \abs{b}^w}.
  \end{equation*}

  Let
  \begin{equation*}
    b^wz = b^{w-1} c + a + d
  \end{equation*}
  for some digits $a$, $c$ and $d$. A digit has absolute value less
  than $\frac12 \abs{b}^w$, so
  \begin{equation*}
     \abs{z} 
     < \abs{b}^{-w} \left(\abs{b}^k + 2\right) \tfrac12 \abs{b}^w
     \leq \abs{b}^{w-1},
  \end{equation*}
  which was to show.
\end{proof}


\section{Voronoi Cells}
\label{sec:voronoi}


We first start to define Voronoi cells. Let $\tau\in\C$ be
an algebraic integer, imaginary quadratic, i.e.\ $\tau$ is solution of an
equation $\tau^2 - p \tau + q = 0$ with $p,q\in\Z$ and such that
$q-p^2/4>0$. 


\begin{definition}[Voronoi Cell]\label{def:voronoi}
  We set
  \begin{equation*}
    V := \set*{z\in\C}{ \forall y\in\Ztau \colon \abs{z} \leq \abs{z-y}}
  \end{equation*}
  and call it the \emph{Voronoi cell for $0$} corresponding to the set
  $\Ztau$. Let $u \in \Ztau$. We define the \emph{Voronoi cell for $u$} as
  \begin{equation*}
    V_u := u + V = \set*{u+z}{z \in V}
    = \set*{z\in\C}{ \forall y\in\Ztau \colon \abs{z-u} \leq \abs{z-y}}.  
  \end{equation*}
  The point $u$ is called \emph{centre of the Voronoi cell} or \emph{lattice
    point corresponding to the Voronoi cell}.
\end{definition}


\begin{figure}
  \centering
  \includegraphics{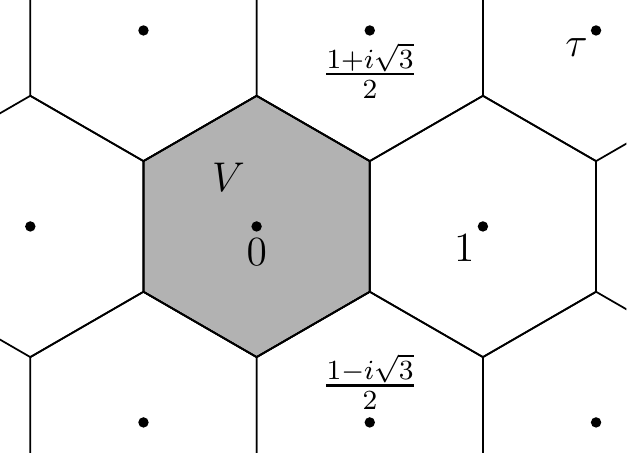}
  \caption[Voronoi cell $V$ for $0$]{Voronoi cell $V$ for $0$ corresponding to
    the set $\Ztau$ with $\tau = \frac{3}{2}+\frac{i}{2}\sqrt{3}$.}
  \label{fig:voronoi}
\end{figure}


An example of a Voronoi cell in a lattice $\Ztau$ is shown in
Figure~\ref{fig:voronoi}. Two neighbouring Voronoi cells have at most a
subset of their boundary in common. This can be a problem, when we tile the
plane with Voronoi cells and want that each point is in exactly one cell. To
fix this problem we define a restricted version of $V$. This is very similar to
the construction used in Avanzi, Heuberger and
Prodinger~\cite{Avanzi-Heuberger-Prodinger:2010:arith-of} and in Heuberger and
Krenn~\cite{Heuberger-Krenn:2010:wnaf-analysis}.


\begin{definition}[Restricted Voronoi Cell]\label{def:restr-voronoi}
  Let $V_u$ be a Voronoi cell with its centre $u$ as above. Let
  $v_0,\dots,v_{m-1}$ with appropriate $m\in\N$ be the vertices of $V_u$. We
  denote the midpoint of the line segment from $v_k$ to $v_{k+1}$ by
  $v_{k+1/2}$, and we use the convention that the indices are meant modulo $m$.

  The \emph{restricted Voronoi cell} $\wt{V}_u$ consists of
  \begin{itemize}
  \item the interior of $V_u$,
  \item the line segments from $v_{k+1/2}$ (excluded) to $v_{k+1}$ (excluded)
    for all $k$,
  \item the points $v_{k+1/2}$ for $k\in\set{0,\dots,\floor{\frac{m}{2}}-1}$,
    and
  \item the points $v_k$ for $k\in\set{1,\dots,\floor{\frac{m}{3}}}$.
  \end{itemize}
  Again we set $\wt{V} := \wt{V}_0$.
\end{definition}


\begin{figure}
  \centering
  \includegraphics{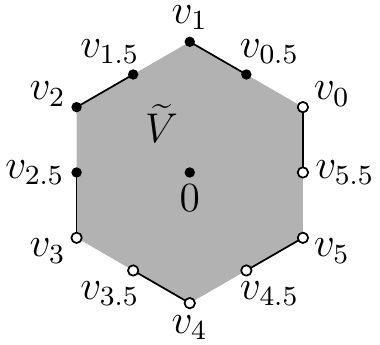}
  \caption[Restricted Voronoi cell $\wt{V}$ for $0$]{Restricted Voronoi cell
    $\wt{V}$ for $0$ corresponding to the set $\Ztau$ with $\tau =
    \frac{3}{2}+\frac{i}{2}\sqrt{3}$.}
  \label{fig:voronoi-restr}
\end{figure}


In Figure~\ref{fig:voronoi-restr} the restricted Voronoi cell of $0$ is shown
for $\tau=\frac{3}{2}+\frac{i}{2}\sqrt{3}$. The second condition in the
definition is used because it benefits symmetries. The third condition is just
to make the midpoints unique. Obviously, other rules\footnote{The rule has to
  make sure that the complex plane can be covered entirely and with no overlaps
  by restricted Voronoi cells, i.e.\ the condition $\C = \biguplus_{z\in\Ztau}
  \wt{V}_z$ has to be fulfilled.} could have been used to define the restricted
Voronoi cell.


The statements (including proofs) of the following lemma can be found in
Heuberger and Krenn~\cite{Heuberger-Krenn:2010:wnaf-analysis}. We use the
notation $\ballo{z}{r}$ for an open ball with centre~$z$ and radius~$r$ and
$\ballc{z}{r}$ for a closed ball.


\begin{lemma}[Properties of Voronoi Cells]\label{lem:voronoi-prop}
  We have the following properties:

  \begin{enumerate}[(a)]

  \item The vertices of $V$ are given explicitly by
    \begin{align*}
      v_0 &= 1/2 + \frac{i}{2\im{\tau}} 
      \left( \im{\tau}^2 + \fracpart{\re{\tau}}^2 
        - \fracpart{\re{\tau}} \right), \\
      v_1 &= \fracpart{\re{\tau}}-\frac12 + \frac{i}{2\im{\tau}} 
      \left( \im{\tau}^2 - \fracpart{\re{\tau}}^2 
        + \fracpart{\re{\tau}} \right), \\
      v_2 &= -1/2 + \frac{i}{2\im{\tau}}   
      \left( \im{\tau}^2 + \fracpart{\re{\tau}}^2 
        - \fracpart{\re{\tau}} \right)
      = v_0 - 1, \\
      v_3 &= -v_0, \\
      v_4 &= -v_1 \\
      \intertext{and}
      v_5 &= -v_2.
    \end{align*}
    All vertices have the same absolute value. If $\re{\tau}\in\Z$, then
    $v_1=v_2$ and $v_4=v_5$, i.e.\ the hexagon degenerates to a rectangle.

  \item The Voronoi cell $V$ is convex.

  \item We get $\ball*{0}{\textstyle\frac12} \subseteq V$.
    
  \item The inclusion $\tau^{-1} V \subseteq V$ holds.

  \end{enumerate}
  
\end{lemma}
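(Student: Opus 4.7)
The plan is to prove each of the four parts separately, using the definition of $V$ as an intersection of half-planes and the arithmetic of $\Ztau$ as a rank-two lattice in $\C$. Parts (b), (c), (d) are short structural arguments, whereas part (a) requires an explicit but elementary computation.

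For (a), I would first observe that $\re{\tau}=p/2$, so $\fracpart{\re{\tau}}\in\{0,\tfrac12\}$, and set $\tau':=\tau-\floor{\re{\tau}}$, which shifts $\tau$ into the strip $0\le\re{\tau'}<1$ without changing the lattice. I would then claim that the Voronoi-relevant neighbors of $0$ are exactly the six lattice points $\pm 1$, $\pm\tau'$, $\pm(\tau'-1)$, and compute $v_0$ as the intersection of the perpendicular bisector $\re{z}=\tfrac12$ of the segment from $0$ to $1$ with the bisector of $[0,\tau']$; a direct calculation in real and imaginary parts yields the stated formula. The remaining vertices arise from the antipodal symmetry $v_{k+3}=-v_k$ (induced by $y\mapsto-y$ stabilising $\Ztau$) together with intersections of bisectors of adjacent neighbors. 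The equality $|v_0|=|v_1|$ is a short algebraic identity: expanding $|v_0|^2-|v_1|^2$, the contribution from the real parts equals $-(\fracpart{\re{\tau}}^2-\fracpart{\re{\tau}})$ and the contribution from the imaginary parts equals $+(\fracpart{\re{\tau}}^2-\fracpart{\re{\tau}})$, so they cancel. When $\re{\tau}\in\Z$, one has $\tau'=\tau$, which forces $v_1=v_2$ and $v_4=v_5$, and the hexagon degenerates to a rectangle as claimed.

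For the remaining parts, (b) is immediate, since $V$ is the intersection over $y\in\Ztau\setminus\{0\}$ of the closed half-planes $\{z\in\C : |z|\le|z-y|\}$ and hence convex. For (c), the key observation is that the integer-valued quadratic form $|a+b\tau|^2=a^2+abp+b^2q$ is positive definite on $\Ztau\setminus\{0\}$ (thanks to $q>p^2/4$), so every nonzero $y\in\Ztau$ has $|y|\ge 1$; for $|z|\le\tfrac12$ the triangle inequality then gives $|z-y|\ge|y|-|z|\ge\tfrac12\ge|z|$, whence $z\in V$. For (d), the relation $\tau^2=p\tau-q$ gives $\tau\Ztau=q\Z+\tau\Z\subseteq\Ztau$, so $\Ztau\subseteq\tau^{-1}\Ztau$; a point $z\in\tau^{-1}V$ satisfies $|z|\le|z-\tau^{-1}y|$ for every $y\in\Ztau$, and since $\tau^{-1}\Ztau$ contains $\Ztau$, this inequality also holds when $\tau^{-1}y$ ranges over $\Ztau$, proving $z\in V$.

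The main obstacle is the justification in (a) that precisely the listed lattice points are Voronoi-relevant, i.e.\ that no further $y\in\Ztau$ contributes a bisector intruding into the candidate hexagon. This reduces to showing $|y|\ge 2|v_0|$ for every other $y\in\Ztau$, which via the positive-definite norm form comes down to a short case analysis on the $\tau$-coefficient of $y$.
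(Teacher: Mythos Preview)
Your argument is sound. The paper itself does not prove this lemma; it simply records the statement and cites Heuberger and Krenn~\cite{Heuberger-Krenn:2010:wnaf-analysis} for the proofs, so there is no in-paper argument to compare against. Your route---identifying the Voronoi-relevant neighbours of $0$ in the shifted basis $\{1,\tau'\}$, intersecting the corresponding perpendicular bisectors, and for (d) using the sublattice inclusion $\tau\Ztau\subseteq\Ztau$---is exactly the standard approach and matches what one finds in that reference.

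One small slip worth correcting: in the degenerate case $\re\tau\in\Z$ you write ``one has $\tau'=\tau$'', but in fact $\tau'=\tau-\lfloor\re\tau\rfloor=i\im\tau$, which equals $\tau$ only when $p=0$. This does not affect your conclusion, since what you actually need is $\fracpart{\re\tau}=0$, which immediately gives $v_1=-\tfrac12+\tfrac{i}{2}\im\tau=v_2$. Also, in that rectangular case the points $\pm(\tau'-1)$ are not genuinely Voronoi-relevant (their bisectors pass through the corners without contributing an edge), which is precisely why the hexagon degenerates; your formulation handles this correctly as the coincidence $v_1=v_2$, so no change to the argument is needed.
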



\section{Digit Sets for Imaginary Quadratic Bases}
\label{sec:digit-sets-iq-bases}


In this section we assume that $\tau\in\C$ is an algebraic integer, imaginary
quadratic, i.e.\ $\tau$ is solution of an equation $\tau^2 - p \tau + q = 0$
with $p,q\in\Z$ and such that $q-p^2/4>0$. By $V$ we denote the Voronoi cell of
$0$ of the lattice $\Ztau$, by $\wt{V}$ the corresponding restricted Voronoi
cell, cf.\ Section~\ref{sec:voronoi}.


We consider \wNAF{} number systems
\begin{equation*}
  (\Ztau, z\mapsto \tau z, \cD),
\end{equation*}
where the digit set $\cD$ is the so called ``minimal norm representatives digit
set''. The following definition specifies that digit set, cf.\
Solinas~\cite{Solinas:1997:improved-algorithm,Solinas:2000:effic-koblit},
Blake, Kumar Murty and Xu~\cite{Blake-Kumar-Xu:2005:effic-algor} or Heuberger
and Krenn~\cite{Heuberger-Krenn:2010:wnaf-analysis}. It is used throughout this
article, whenever we have the setting (imaginary quadratic base) mentioned
above.


\begin{definition}[Minimal Norm Representatives Digit Set]
  \label{def:min-norm-digit-set}

  Let $w$ be an integer with $w\geq2$ and $\cD\subseteq\Ztau$ consist of $0$
  and exactly one representative of each residue class of $\Ztau$ modulo
  $\tau^w$ that is not divisible by $\tau$. If all such representatives
  $\eta\in\cD$ fulfil $\eta \in \tau^w \wt{V}$, then $\cD$ is called the
  \emph{minimal norm representatives digit set modulo $\tau^w$}.
\end{definition}


The previous definition uses the restricted Voronoi cell $\wt{V}$ for the point
$0$, see Definition~\ref{def:restr-voronoi}, to choose a representative with
minimal norm. Note that by construction of $\wt{V}$, there is only one such
choice for the digit set. Some examples of such digit sets are shown in
Figures~\ref{fig:digit-sets-pgeq3}, \ref{fig:digit-sets-pgeq3-p3q3},
\ref{fig:digit-sets-p2q2} and~\ref{fig:digit-sets-p0}.


\begin{figure}
  \centering 
  \subfloat[Digit set for $\tau=\frac32+\frac{i}{2}\sqrt{7}$ and $w=2$.]{
    \includegraphics[width=0.2\linewidth]{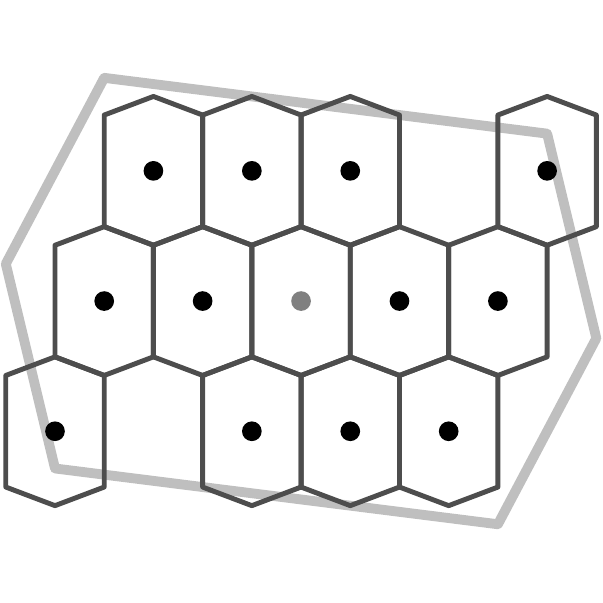}
    \label{fig:ds:p3q4w2}}
  \quad
  \subfloat[Digit set for $\tau=\frac32+\frac{i}{2}\sqrt{7}$ and $w=3$.]{
    \includegraphics[width=0.2\linewidth]{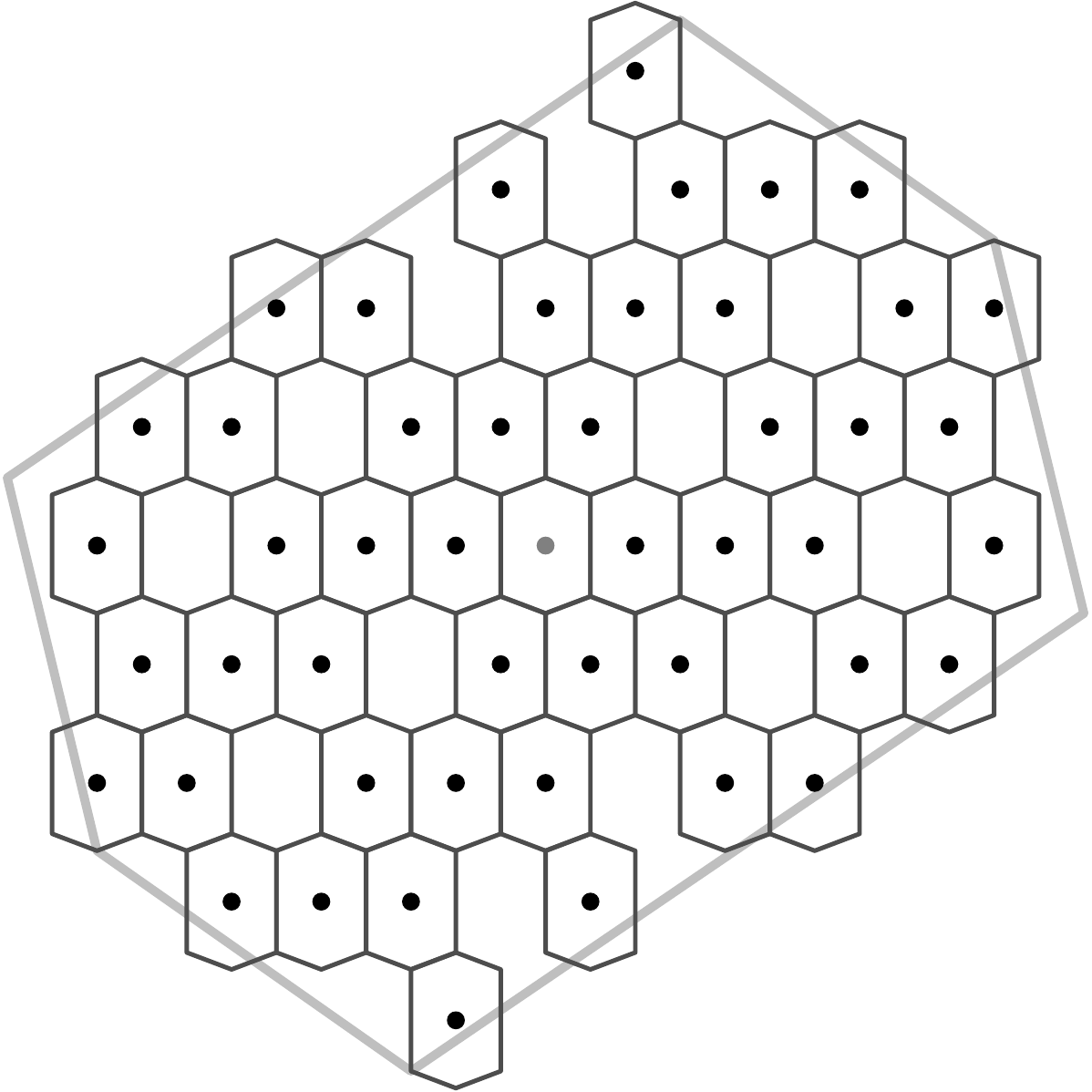}
    \label{fig:ds:p3q4w3}}
  \quad
  \subfloat[Digit set for $\tau=2+i$ and $w=2$.]{
    \includegraphics[width=0.2\linewidth]{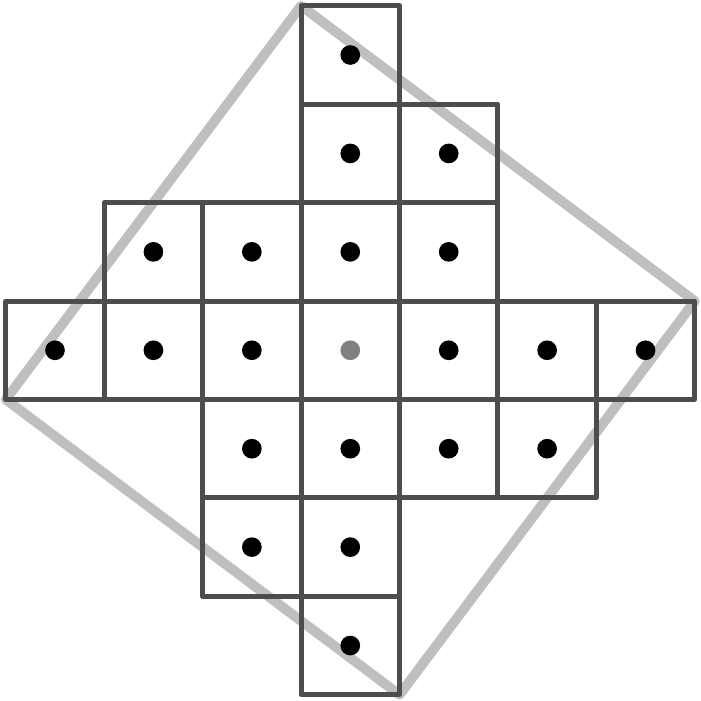}
    \label{fig:ds:p4q5w4}}
  \quad
  \subfloat[Digit set for $\tau=\frac52+\frac{i}{2}\sqrt{3}$ and $w=2$.]{
    \includegraphics[width=0.2\linewidth]{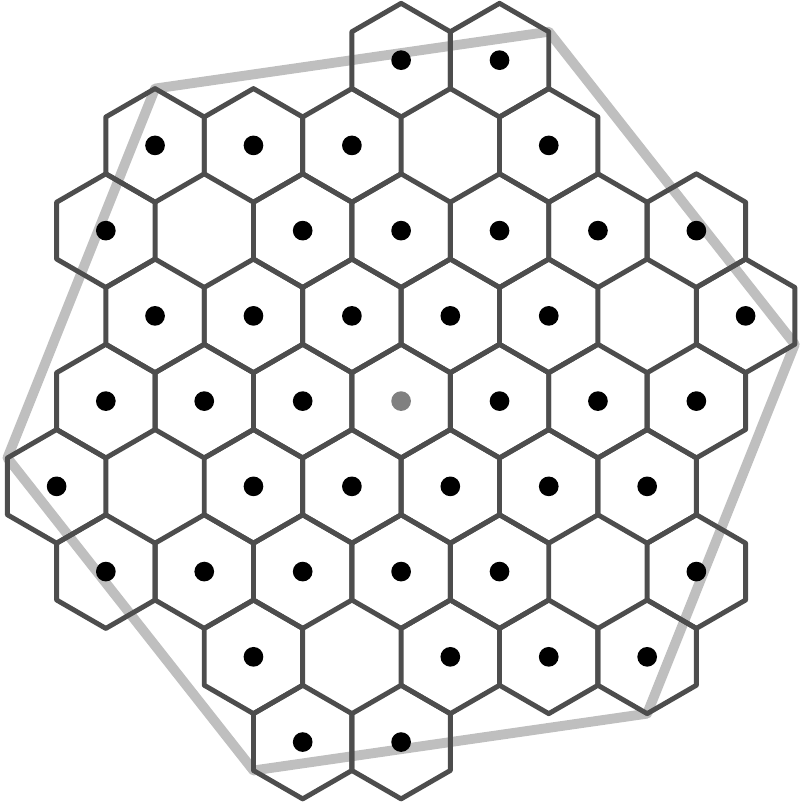}
    \label{fig:ds:p5q7w5}}
  \quad

  \caption[Digit sets for different $\tau$ and $w$]{Minimal norm representatives
    digit sets modulo $\tau^w$. For each digit
    $\eta$, the corresponding Voronoi cell $V_\eta$ is drawn. The large scaled
    Voronoi cell is $\tau^w V$.}
  \label{fig:digit-sets-pgeq3}
\end{figure}


\begin{remark}
  \label{rem:choice-digit-set-voronoi-boundary}

  The definition of a minimal norm representative digit set,
  Definition~\ref{def:min-norm-digit-set}, depends on the definition of the
  restricted Voronoi cell $\wt{V}$, Definition~\ref{def:restr-voronoi}. There
  we had some freedom in choosing which part of the boundary is included in
  $\wt{V}$, cf.\ the remarks after Definition~\ref{def:restr-voronoi}. We point
  out that all results given here for imaginary quadratic bases are valid for
  any admissible configuration of the restricted Voronoi cell, although only
  the case corresponding to Definition~\ref{def:restr-voronoi} will be
  presented.
\end{remark}


Using a minimal norm representatives digit set, each element of $\Ztau$
corresponds to a unique \wNAF{}, i.e.\ the pre-number system given at the
beginning of this section is indeed a \wNAF{} number system. This is stated in
the following theorem, which can be found in Heuberger and
Krenn~\cite{Heuberger-Krenn:2010:wnaf-analysis}.


\begin{theorem}[Existence and Uniqueness Theorem]
  \label{thm:existence-uniquness-wnaf}

  Let $w$ be an integer with $w\geq2$. Then the pre-number system
  \begin{equation*}
    (\Ztau, z\mapsto \tau z, \cD),
  \end{equation*}
  where $\cD$ is the minimal norm representatives digit set modulo $\tau^w$, is
  a non-redundant \wNAF{} number system, i.e.\ each lattice point $z\in\Ztau$
  has a unique \wNAF{}-expansion $\bfeta\in\cD^{\N_0}$ with
  $z=\expvalue{\bfeta}$.
\end{theorem}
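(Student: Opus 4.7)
The plan is to prove existence through a constructive greedy algorithm and uniqueness through a position-by-position induction, using throughout the defining property of $\cD$ (a unique representative of each residue class modulo $\tau^w$ not divisible by $\tau$) together with the geometric constraint $\cD \subseteq \tau^w \wt{V}$.

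For existence, given $z \in \Ztau$ with $z \neq 0$, I would write $z = \tau^k y$ with $\tau \nmid y$, select the unique $d \in \cD \setminus \set{0}$ representing $y$ modulo $\tau^w$, and split off $z = \tau^k d + \tau^{k+w} y'$ where $y' := (y - d)/\tau^w \in \Ztau$. Recursively expanding $y'$ and placing $d$ at position $k$ (with the recursive expansion starting at position $k + w$) yields a $w$-NAF: by construction the $w - 1$ positions following $k$ are zero, and any two non-zero positions in the assembled output differ by at least $w$.

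The main obstacle is termination. Here I would exploit the minimal-norm property: since $d \in \tau^w \wt{V} \subseteq \tau^w V$, Lemma~\ref{lem:voronoi-prop} gives $\abs{d} \leq R \abs{\tau}^w$ with $R := \max_{v \in V}\abs{v}$, hence
\begin{equation*}
  \abs{y'} \leq \frac{\abs{y} + \abs{d}}{\abs{\tau}^w} \leq \frac{\abs{y}}{\abs{\tau}^w} + R.
\end{equation*}
Since $\abs{\tau}^w > 1$, for $\abs{y}$ exceeding $R \abs{\tau}^w / (\abs{\tau}^w - 1)$ this forces $\abs{y'} < \abs{y}$, so strong induction on $\abs{y}$ handles the unbounded regime. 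Within the resulting bounded disc only finitely many lattice points remain, and termination there must be argued separately, either by a direct finite case analysis or by extracting a strictly descending lexicographic invariant using $\tau^{-1} V \subseteq V$ (Lemma~\ref{lem:voronoi-prop}(d)) to confine the orbit to a contracting nested family of cells. This finite-case verification is the technical crux of the existence half.

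For uniqueness, suppose $\bfeta$ and $\bfvartheta$ are two $w$-NAF expansions of the same non-zero $z \in \Ztau$. Let $k$ be the minimal index with $\eta_k \neq 0$; the $w$-NAF property forces $\eta_{k+1} = \dots = \eta_{k+w-1} = 0$, so $z = \tau^k \eta_k + \tau^{k+w} y''$ for some $y'' \in \Ztau$. Because non-zero digits are never in the image of multiplication by $\tau$, we have $\tau \nmid \eta_k$, and the $\tau$-adic valuation of $z$ equals exactly $k$. The same argument applied to $\bfvartheta$ yields the same $k$, after which $z/\tau^k \equiv \eta_k \equiv \vartheta_k \pmod{\tau^w}$ and uniqueness of the representative inside $\cD$ gives $\eta_k = \vartheta_k$. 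Induction on the number of non-zero digits, applied to $(z - \tau^k \eta_k)/\tau^{k+w}$, then completes the proof.
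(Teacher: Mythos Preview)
The paper does not actually prove this theorem: it is stated with a reference to Heuberger and Krenn~\cite{Heuberger-Krenn:2010:wnaf-analysis} and no argument is given here. So there is no in-paper proof to compare against; your outline is essentially the standard argument one finds in that reference.

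Your uniqueness argument is correct. For existence, the greedy construction and the contraction estimate $\abs{y'} \le \abs{\tau}^{-w}\abs{y} + R$ are the right starting point, and you correctly isolate the one real difficulty: termination once the iterates are trapped in the bounded disc of radius $R\abs{\tau}^{w}/(\abs{\tau}^{w}-1)$. However, you do not actually close this gap. ``A direct finite case analysis'' is not available in a proof meant to cover all $\tau$ and all $w\ge 2$ simultaneously, and the hint about a ``strictly descending lexicographic invariant using $\tau^{-1}V\subseteq V$'' is not yet an argument: nothing you have written rules out a cycle $y_{1}\to y_{2}\to\cdots\to y_{1}$ of non-divisible elements inside that disc. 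The genuine work in the cited paper is precisely to show that the greedy step is a strict contraction for the right potential (roughly, that $y\in\tau^{w}\wt V$ already forces $y\in\cD$, and that outside $\tau^{w}\wt V$ a suitable norm strictly decreases), and this is what your proposal still owes.
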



\section{Optimality for Imaginary Quadratic Bases}
\label{sec:opt:i-q}


In this section we assume that $\tau\in\C$ is an algebraic integer, imaginary
quadratic, i.e.\ $\tau$ is solution of an equation $\tau^2 - p \tau + q = 0$
with $p,q\in\Z$ and such that $q-p^2/4>0$. Further let $w$ be an integer with
$w\geq2$ and let
\begin{equation*}
  (\Ztau, z\mapsto \tau z, \cD)
\end{equation*}
be the non-redundant \wNAF{} number system with minimal norm representatives
digit set modulo $\tau^w$, cf.\ Section~\ref{sec:digit-sets-iq-bases}.


Our main question in this section, as well as for the remaining part of this
article, is the following: For which bases and which $w$ is the width-$w$
non-adjacent form optimal? To answer this, we use the result from
Section~\ref{sec:abstract-optimality}. If we can show that the digit set $\cD$
is \wsubadditive{}, then optimality follows. This is done in the lemma
below. The result will then be formulated in
Corollary~\ref{cor:optimality-cor}, which, eventually, contains the optimality
result for our mentioned configuration.


\begin{lemma}\label{lem:opt-techn-condition}
  Suppose that one of the following conditions hold:
  \begin{enumerate}[(i)]
  \item \label{enu:tc-w4} $w\geq4$ and $\abs{p}\geq3$,
  \item \label{enu:tc-w3} $w=3$ and $\abs{p}\geq5$,
  \item \label{enu:tc-w3-p4} $w=3$, $\abs{p}=4$ and $5 \leq q \leq 9$,
  \item \label{enu:tc-w2-peven} $w=2$, $p$ even, and
    \begin{equation*}
      \left(\frac{1}{\sqrt{q}}+\frac{2}{q}\right)^2
      \left(q-\frac{p^2}{4}+1\right)
      < 1
    \end{equation*}
    or equivalently 
    \begin{equation*}
      \abs{p} > 2 \sqrt{q + 1 - \frac{q^2}{\left(2+\sqrt{q}\right)^2}},
    \end{equation*}
  \item \label{enu:tc-w2-podd} $w=2$, $p$ odd and
    \begin{equation*}
      \left(\frac{1}{\sqrt{q}}+\frac{2}{q}\right)^2
      \left(q-\frac{p^2}{4}+\frac{1}{4}\right)^2
      \left(q-\frac{p^2}{4}\right)^{-1}
      < 1.
    \end{equation*}
  \end{enumerate}
  Then the digit set $\cD$ is \wsubadditive{}.
\end{lemma}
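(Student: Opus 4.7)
The plan is to invoke Proposition~\ref{pro:suff-cond-wsubadditive} with the geometric choice $U := \tau^w V$ and with $S$ equal to the set of values of all singletons. The structural prerequisites are nearly immediate: $\cD \subseteq \tau^w \wt{V} \subseteq U$ by Definition~\ref{def:min-norm-digit-set}; $-\cD \subseteq U$ since $V$ is centrally symmetric by Lemma~\ref{lem:voronoi-prop}~(a); $U \subseteq \f{\Phi}{U} = \tau^{w+1} V$ is equivalent to $\tau^{-1} V \subseteq V$, which is Lemma~\ref{lem:voronoi-prop}~(d); and Theorem~\ref{thm:existence-uniquness-wnaf} supplies a representative of every residue class modulo $\tau^w$ not contained in $\tau\Ztau$.

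The substance is the inclusion~\eqref{eq:suff-cond-wsubadditive}. Factoring out $\tau^w$, it reduces to showing that every $v \in \Ztau$ of the form $v = \tau^{w-1} a + b + c$ with $a,b,c \in V$ is either $0$ or equal to $\tau^k d$ for some $k \in \N_0$ and $d \in \cD\setminus\set{0}$. Writing $r_V$ for the common absolute value of the vertices of $V$ (Lemma~\ref{lem:voronoi-prop}~(a)), the triangle inequality gives $\abs{v} \leq (\abs{\tau}^{w-1} + 2)\, r_V$, and I would verify in each of the five cases the strict bound
\begin{equation*}
  \bigl(\abs{\tau}^{w-1} + 2\bigr)\, r_V < \tfrac{1}{2}\abs{\tau}^w.
\end{equation*}
Substituting the closed forms $r_V = \tfrac{1}{2}\sqrt{1 + q - p^2/4}$ for $p$ even and $r_V = (q - p^2/4 + \tfrac{1}{4})/(2\sqrt{q - p^2/4})$ for $p$ odd (both obtained from Lemma~\ref{lem:voronoi-prop}~(a)) and squaring, one finds that for $w = 2$ this inequality is \emph{exactly} the reformulation of hypotheses~(iv) and~(v); for $w \geq 3$ it follows by a direct numerical estimate, finishing with a finite case check for the five values $q \in \set{5,\dots,9}$ in case~(iii). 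Once the bound holds, Lemma~\ref{lem:voronoi-prop}~(c) places $v$ in the open ball $\tau^w \ballo{0}{1/2}$, hence strictly inside the interior of $\tau^w V$.

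The final step is the geometric claim: if $v \in \Ztau$ lies in the interior of $\tau^w V$, then $v = 0$ or $v = \tau^k d$ for some $k \in \set{0,\dots,w-1}$ and $d \in \cD$. Assuming $v \neq 0$, write $v = \tau^k v'$ with $k$ maximal so that $\tau \nmid v'$; the case $k \geq w$ is impossible, since $\abs{v'} < 1/2$ would force $v' = 0$. For $k < w$, iterated application of $\tau^{-1} V \subseteq V$ yields $v' \in \tau^{w-k} V \subseteq \tau^w V$, and interiority is preserved under division by $\tau^k$. Let $d \in \cD$ be the unique digit with $d \equiv v' \pmod{\tau^w}$, and write $v' - d = \tau^w u^*$ with $u^* \in \Ztau$. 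If $u^* \neq 0$, then $d \in \tau^w V$ yields $\abs{d} \leq \abs{d + \tau^w u^*} = \abs{v'}$, while $v'$ lying in the open interior of $\tau^w V$ yields $\abs{v'} < \abs{v' - \tau^w u^*} = \abs{d}$, a contradiction. Hence $v' = d$ and $v = \tau^k d$ is a singleton value. The main obstacle I foresee is the numerical verification, which is genuinely tight in case~(iii) (for $\abs{p} = 4$, $q = 9$, $w = 3$ the two sides of the bound differ by less than $0.03$), together with careful bookkeeping of strict versus non-strict inclusions in the geometric descent.
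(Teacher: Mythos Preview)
Your proposal is essentially the paper's own proof: apply Proposition~\ref{pro:suff-cond-wsubadditive} with $U=\tau^w V$, reduce the inclusion to the scalar inequality $(\abs{\tau}^{w-1}+2)\,r_V<\tfrac12\abs{\tau}^w$ (the paper writes this as $T(p,q,w)<1$), and verify it case by case---the paper handles (i)--(iii) via monotonicity of $T^2$ in $\abs p$, $q$, $w$ rather than a bare ``direct numerical estimate'', which you will in fact need since the limits as $q\to\infty$ are exactly~$1$. One small slip in your geometric claim: the justification ``$\abs{v'}<1/2$'' for excluding $k\geq w$ relies on the ball bound $\abs v<\tfrac12\abs\tau^w$, not on the weaker hypothesis $v\in\interior{\tau^w V}$ under which you state the claim; either restrict the claim to the ball (which is all you need), or argue directly that $v'\in\tau^{w-k}\interior V\subseteq\interior V$ and that the only lattice point in $\interior V$ is~$0$.
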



\begin{figure}
  \centering
  \includegraphics{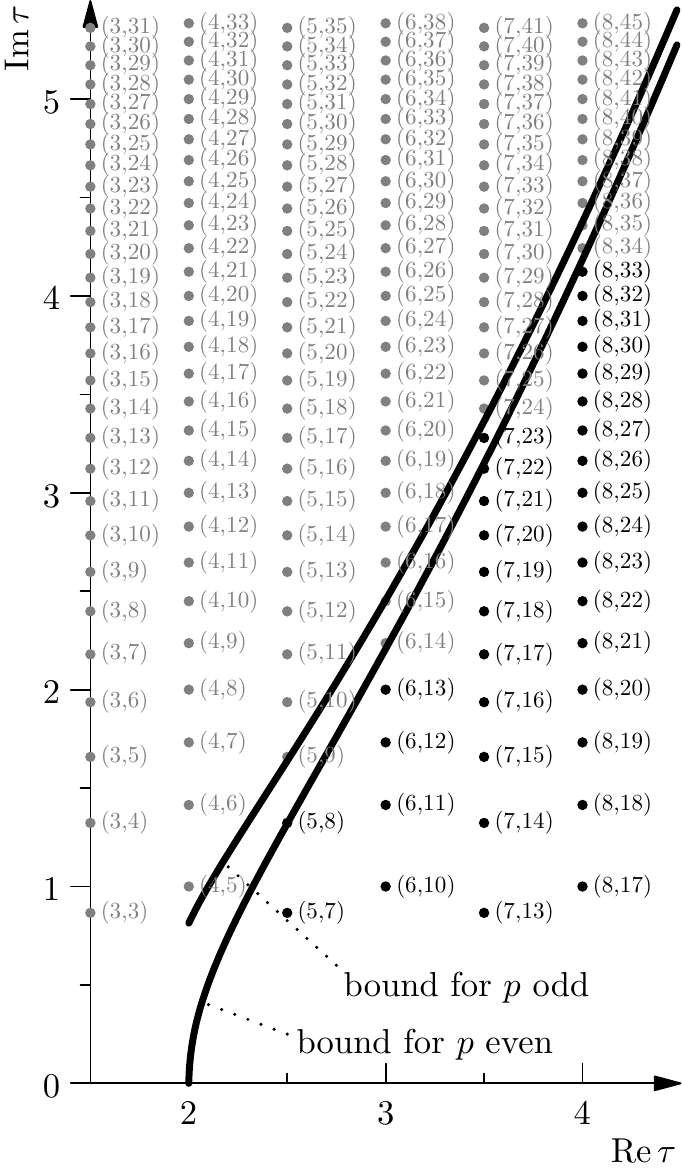}
  \caption{Bounds for the optimality of \wNAF[2]{}s. The two curves correspond
    to the conditions~\itemref{enu:tc-w2-peven} and~\itemref{enu:tc-w2-podd} of
    Lemma~\ref{lem:opt-techn-condition}. A dot corresponds to a valid
    $\tau$. If the dot is black, then the \wNAF[2]{}s of that $\tau$ are
    optimal, gray means not decidable with this method. Each dot is labelled
    with $(\abs{p},q)$.}
  \label{fig:bounds-w2}
\end{figure}


The conditions~\itemref{enu:tc-w2-peven} and~\itemref{enu:tc-w2-podd} of
Lemma~\ref{lem:opt-techn-condition}, i.e.\ the case $w=2$, are illustrated
graphically in Figure~\ref{fig:bounds-w2}.


\begin{proof} 
  If
  \begin{equation*}
    \tau^{w-1} V + V + V \subseteq \tau^w \interior{V}
  \end{equation*} 
  holds, then the digit set $\cD$ is \wsubadditive{} since $\cD \subseteq \tau^w
  V$, $-\cD \subseteq \tau^w V$, $V \subseteq \tau V$ and $z\in\tau^w
  \interior{V} \cap \Ztau$ implies that there is an integer $\ell\geq0$ with
  $z\in\tau^\ell\cD$. The sufficient condition of
  Proposition~\ref{pro:suff-cond-wsubadditive} was used with $U=\tau^w V$ and
  $S=\tau^w \interior{V} \setminus\set{0}$.

  Since $V$ is convex, it is sufficient to show that
  \begin{equation*}
    \tau^{w-1} V + 2V \subseteq \tau^w \interior{V}.
  \end{equation*}
  This will be done by showing
  \begin{equation*}
    \left(\abs\tau^{-1} + 2 \abs\tau^{-w}\right) \abs{V} < \tfrac12,
  \end{equation*}
  where $\abs{V}$ denotes the radius of the smallest closed disc with centre
  $0$ containing $V$. By setting
  \begin{equation*}
    \f{T}{p,q,w} := 2 \left(\abs\tau^{-1} + 2 \abs\tau^{-w}\right) \abs{V},
  \end{equation*}
  we have to show that
  \begin{equation*}
    \f{T}{p,q,w} < 1.
  \end{equation*}
  Remark that $\f{T}{p,q,w} > 0$, so it is sufficient to show
  \begin{equation*}
    \f{T^2}{p,q,w} < 1.
  \end{equation*}

  For each of the different conditions given, we will check that the inequality
  holds for special values of $p$, $q$ and $w$ and then use a monotonicity
  argument to get the result for other values of $p$, $q$ and $w$. In the
  following we distinguish between even and odd $p$.
  
  Let first $p$ be even, first. Then $\frac12 + \frac{i}{2} \im{\tau}$ is a
  vertex of the Voronoi cell $V$. This means $\abs{V} = \frac12
  \sqrt{1+q-p^2/4}$. Inserting that and $\abs\tau = \sqrt{q}$ in the asserted
  inequality yields
  \begin{equation*}
    \f{T^2}{p,q,w} = \left( \frac{1}{\sqrt{q}} + 2 q^{-w/2} \right)^2
    \left(1+q-\frac{p^2}{4}\right) < 1.
  \end{equation*}
  It is easy to see that the left hand side of this inequality is monotonically
  decreasing in $\abs{p}$ (as long as the condition $q>p^2/4$ is fulfilled)
  and monotonically decreasing in $w$. We assume $p\geq0$.

  If we set $p=4$ and $w=4$, we get
  \begin{equation*}
    \f{T^2}{4,q,4} = - \frac{12}{q^4} + \frac{4}{q^3} 
    - \frac{12}{q^{5/2}} + \frac{4}{q^{3/2}} - \frac{3}{q}+1,
  \end{equation*}
  which is strictly monotonically increasing for $q\geq5$. Further we get
  \begin{equation*}
    \lim_{q\to\infty} \f{T^2}{4,q,4} = 1.
  \end{equation*}
  This means $\f{T^2}{4,q,4} < 1$ for all $q\geq5$. Since $p\geq4$ implies
  $q\geq5$ and because of the monotonicity mentioned before, the
  case~\itemref{enu:tc-w4} for the even $p$ is completed.

  If we set $p=6$ and $w=3$, we get
  \begin{equation*}
    \f{T^2}{6,q,3} = -\frac{32}{q^3}-\frac{28}{q^2}-\frac{4}{q}+1,
  \end{equation*}
  which is obviously less than $1$. Therefore, again by monotonicity, the
  case~\itemref{enu:tc-w3} is done for the even $p$.

  If we set $p=4$ and $w=3$, we obtain
  \begin{equation*}
    \f{T^2}{4,q,3} = -\frac{12}{q^3}-\frac{8}{q^2}+\frac{1}{q}+1,
  \end{equation*}
  which is monotonically increasing for $5 \geq q \geq 18$. Further we get
  \begin{equation*}
    \f{T^2}{4,9,3} = \frac{242}{243} < 1.
  \end{equation*}
  This means $\f{T^2}{4,q,3} < 1$ for all $q$ with $5 \leq q \leq 9$. So
  case~\itemref{enu:tc-w3-p4} is completed.

  The condition given in \itemref{enu:tc-w2-peven} is exactly
  \begin{equation*}
    \f{T^2}{p,q,2} < 1
  \end{equation*}
  for even $p$, so the result follows immediately.

  Now, let $p$ be odd. Then $\frac{i}{2 \im{\tau}} \left( \im{\tau}^2 + \frac14
  \right)$ is a vertex of the Voronoi cell $V$. This means
  \begin{equation*}
    \abs{V} = \frac12 \left(q-\frac{p^2}{4}\right)^{-1/2} 
    \left( q - \frac{p^2}{4} + \frac14 \right).
  \end{equation*}
  Inserting that in the asserted inequality yields
  \begin{equation*}
    \f{T^2}{p,q,w} = \left(q-\frac{p^2}{4}\right)^{-1} 
    \left(q-\frac{p^2}{4}+\frac{1}{4}\right)^2 
    \left(2 q^{-w/2}+q^{-1/2}\right)^2 < 1.
  \end{equation*}
  Again, it is easy to verify that the left hand side of this inequality is
  monotonically decreasing in $p$ (as long as the condition $q \geq p^2/4 +
  1/4$ is fulfilled) and monotonically decreasing in $w$. We assume $p\geq0$.

  If we set $p=3$ and $w=4$, we get
  \begin{equation*}
    \f{T^2}{3,q,4} = \frac{4 (q-2)^2 \left(q^{3/2}+2\right)^2}{q^4 (4 q-9)}
  \end{equation*}
  which is strictly monotonically increasing for $q\geq3$. Further we get
  \begin{equation*}
    \lim_{q\to\infty} \f{T^2}{3,q,4} = 1.
  \end{equation*}
  This means $\f{T^2}{3,q,4} < 1$ for all $q\geq3$. Since $p\geq3$ implies
  $q\geq3$ and because of the monotonicity mentioned before, the
  case~\itemref{enu:tc-w4} for the odd $p$ is finished.

  If we set $p=5$ and $w=3$, we get
  \begin{equation*}
    \f{T^2}{5,q,3} = \frac{4 (q-6)^2 (q+2)^2}{q^3 (4 q-25)}
  \end{equation*}
  which is strictly monotonically increasing for $q\geq7$. Further we get
  \begin{equation*}
    \lim_{q\to\infty} \f{T^2}{5,q,3} = 1.
  \end{equation*}
  This means $0 < \f{T^2}{5,q,3} < 1$ for all $q\geq7$. As $p\geq5$
  implies $q\geq7$, using monotonicity again, the
  case~\itemref{enu:tc-w3} is done for the odd $p$.

  The condition given in \itemref{enu:tc-w2-podd} is exactly
  \begin{equation*}
    \f{T^2}{p,q,2} < 1
  \end{equation*}
  for odd $p$, so the result follows immediately.

  Since we have now analysed all the conditions, the proof is finished.
\end{proof}


Now we can prove the following optimality corollary, which is a consequence of
Theorem~\ref{thm:optimality-thm}.

 
\begin{corollary}\label{cor:optimality-cor}
  Suppose that one of the conditions~\itemref{enu:tc-w4}
  to~\itemref{enu:tc-w2-podd} of Lemma~\ref{lem:opt-techn-condition}
  holds. Then the width\nbd-$w$ non-adjacent form expansion for each element of
  $\Ztau$ is optimal.
\end{corollary}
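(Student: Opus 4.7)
The plan is to assemble the corollary from three results already established in the paper: Lemma~\ref{lem:opt-techn-condition}, the Optimality Theorem (Theorem~\ref{thm:optimality-thm}), and the Existence and Uniqueness Theorem (Theorem~\ref{thm:existence-uniquness-wnaf}). All the geometric and numerical work has already been carried out in Lemma~\ref{lem:opt-techn-condition}, which guarantees that $\cD$ is \wsubadditive{} under any of the five listed conditions.

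First I would verify the standing hypothesis of Theorem~\ref{thm:optimality-thm}, namely $\bigcap_{m \in \N_0} \tau^m \Ztau = \set{0}$. Since $\abs{\tau}^2 = q$, it suffices to observe that $q \geq 2$ in every case of Lemma~\ref{lem:opt-techn-condition}. In the cases~\itemref{enu:tc-w4}--\itemref{enu:tc-w3-p4} this is immediate from $q > p^2/4$ together with the lower bound on $\abs{p}$; in cases~\itemref{enu:tc-w2-peven} and~\itemref{enu:tc-w2-podd} one checks that $q=1$ already violates the explicit inequalities (the left-hand side of the condition reduces to a value $\geq 1$). Hence $\abs{\tau} \geq \sqrt{2} > 1$, and no nonzero element of $\Ztau$ can lie in $\tau^m \Ztau$ for arbitrarily large $m$, which gives the desired intersection property.

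Having checked the hypothesis, I would invoke the equivalence of conditions~\itemref{enu:thm-add:subadditive} and~\itemref{enu:thm-add:optimal} in Theorem~\ref{thm:optimality-thm}: every $z \in \Ztau$ admitting some multi-expansion also admits an optimal \wNAF{}-expansion. By Theorem~\ref{thm:existence-uniquness-wnaf}, every $z \in \Ztau$ possesses a (unique) \wNAF{}-expansion, which is in particular a multi-expansion. Consequently every lattice element admits an optimal \wNAF{}-expansion, and by the uniqueness asserted in Theorem~\ref{thm:existence-uniquness-wnaf}, this optimal \wNAF{}-expansion must coincide with the unique \wNAF{}-expansion of $z$, yielding the claim.

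I do not anticipate any substantive obstacle: the corollary is essentially a citation exercise combining earlier results, with the only genuine verification being the elementary bound on $\abs{\tau}$. The real work lies in Lemma~\ref{lem:opt-techn-condition} (the geometric verification that $\tau^{w-1}V + 2V \subseteq \tau^w \interior{V}$) and in Theorem~\ref{thm:optimality-thm} (the abstract reduction of optimality to \wsubadditivity{}).
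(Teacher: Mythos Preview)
Your proposal is correct and follows the same route as the paper: invoke Lemma~\ref{lem:opt-techn-condition} to obtain \wsubadditivity{}, then apply Theorem~\ref{thm:optimality-thm}. The paper's proof is a two-line citation of these two results; you are simply more explicit in verifying the intersection hypothesis $\bigcap_m \tau^m\Ztau=\{0\}$ (via $\abs\tau>1$) and in using Theorem~\ref{thm:existence-uniquness-wnaf} to pass from ``an optimal \wNAF{}-expansion exists'' to ``\emph{the} \wNAF{}-expansion is optimal'', both of which the paper leaves implicit.
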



\begin{proof}
  Lemma~\ref{lem:opt-techn-condition} implies that the digit set $\cD$ is
  \wsubadditive{}, therefore Theorem~\ref{thm:optimality-thm} can be used
  directly to get the desired result.
\end{proof}


\begin{figure}
  \centering
  \includegraphics{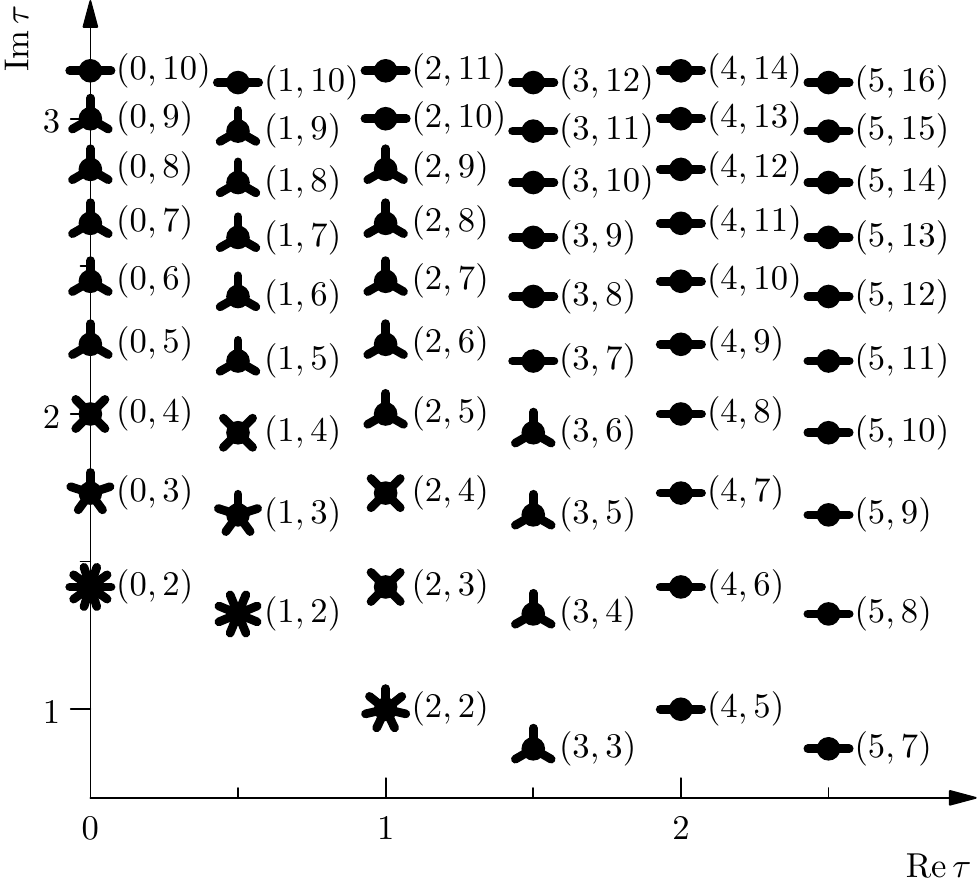}
  \caption[Voronoi cell $V$ for $0$]{Optimality of a
    \wNAF[(w-1)]{}-expansion. Each symbol is labelled with $(\abs{p},q)$. The
    number of lines around each symbol equals the minimal~$w$ for which there
    is an optimal \wNAF[(w-1)]{}-expansion of each element of $\Ztau$.}
  \label{fig:opt-w-1-NAF}
\end{figure}

\begin{remark}
  We have the following weaker optimality result. Let $p$, $q$ and $w$ be
  integers with $\abs{p} \geq p_0$, $q \geq q_0$ and $w \geq w_0$ for a
  $(p_0,q_o,w_0) \in L$, where
  \begin{multline*}
    L = \left\{ (0,10,2), (0,5,3),(0,4,4),(0,3,5),(0,2,10), \right. \\
    \left. (1,2,8),(2,3,4),(2,2,7),(3,7,2),(3,3,3),(4,5,2) \right\}.
  \end{multline*}
  Then we can show that the minimal norm representatives digit set modulo
  $\tau^w$ coming from a $\tau$ with $(p,q)$ is \wweaksubadditive{}, and
  therefore, by Remark~\ref{rem:wweaksubadditive}, we obtain optimality of a
  \wNAF[(w-1)]{}s of each element of $\Ztau$. The results are visualised
  graphically in Figure~\ref{fig:opt-w-1-NAF}.

  To show that the digit set is \wweaksubadditive{} we proceed in the same way
  as in the proof of Lemma~\ref{lem:opt-techn-condition}. We have to show the
  condition
  \begin{equation*}
    \f{T'}{p,q,w} < 1
  \end{equation*}
  where
  \begin{equation*}
    \f{T'}{p,q,w} = 2 \left(\abs\tau^{-2} + 2 \abs\tau^{-w}\right) \abs{V}
  \end{equation*}
  with $\abs\tau = \sqrt{q}$. When $p$ is even, we have
  \begin{equation*}
    \abs{V} = \frac12 \sqrt{1+q-\frac{p^2}{4}},
  \end{equation*}
  and when $p$ is odd, we have
  \begin{equation*}
    \abs{V} = \frac12 \left(q-\frac{p^2}{4}\right)^{-1/2} 
    \left( q - \frac{p^2}{4} + \frac14 \right).
  \end{equation*}
  Using monotonicity arguments as in the proof of
  Lemma~\ref{lem:opt-techn-condition} yields the list $L$ of ``critical
  points''.
\end{remark}


\section{The $p$-is-$3$-$q$-is-$3$-Case}
\label{sec:case-p-3-q-3}


One important case can be proved by using the Optimality Theorem of
Section~\ref{sec:abstract-optimality}, too, namely when $\tau$ comes from a
Koblitz curve in characteristic~$3$. We specialise the setting of
Section~\ref{sec:opt:i-q} to $p=3\mu$ with $\mu\in\set{-1,1}$ and $q=3$. We
continue looking at \wNAF{}-number systems with minimal norm representative
digit set modulo $\tau^w$ with $w\geq2$. Some examples of those digit sets are
shown in Figure~\ref{fig:digit-sets-pgeq3-p3q3}. We have the following
optimality result.


\begin{figure}
  \centering 
  \subfloat[Digit set for $\tau=\frac{3}{2} + \frac{i}{2}
  \sqrt{3}$ and $w=2$.]{
    \includegraphics[width=0.2\linewidth]{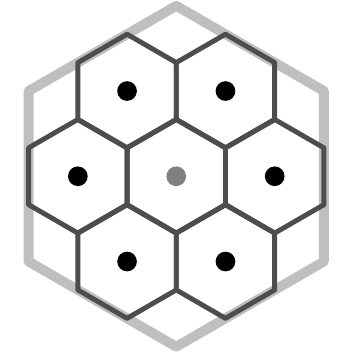}
    \label{fig:ds:p3q3w2}}
  \quad
  \subfloat[Digit set for $\tau=\frac{3}{2} + \frac{i}{2}
  \sqrt{3}$ and $w=3$.]{
    \includegraphics[width=0.2\linewidth]{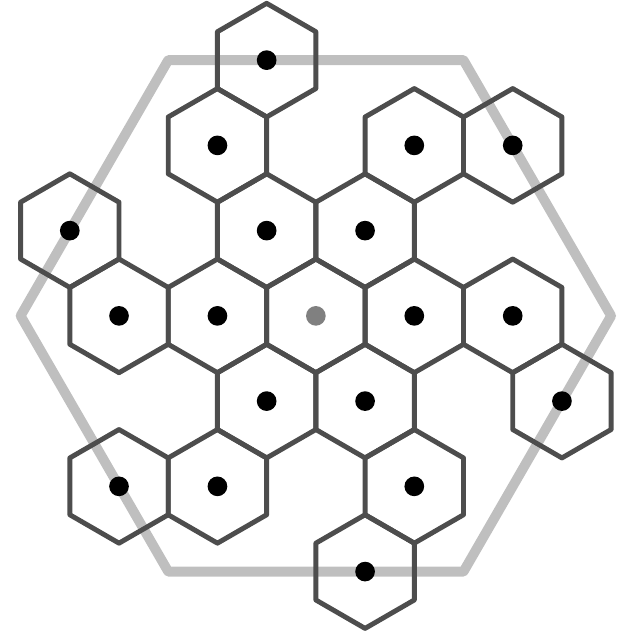}
    \label{fig:ds:p3q3w3}}
  \quad
  \subfloat[Digit set for $\tau=\frac{3}{2} + \frac{i}{2}
  \sqrt{3}$ and $w=4$.]{
    \includegraphics[width=0.2\linewidth]{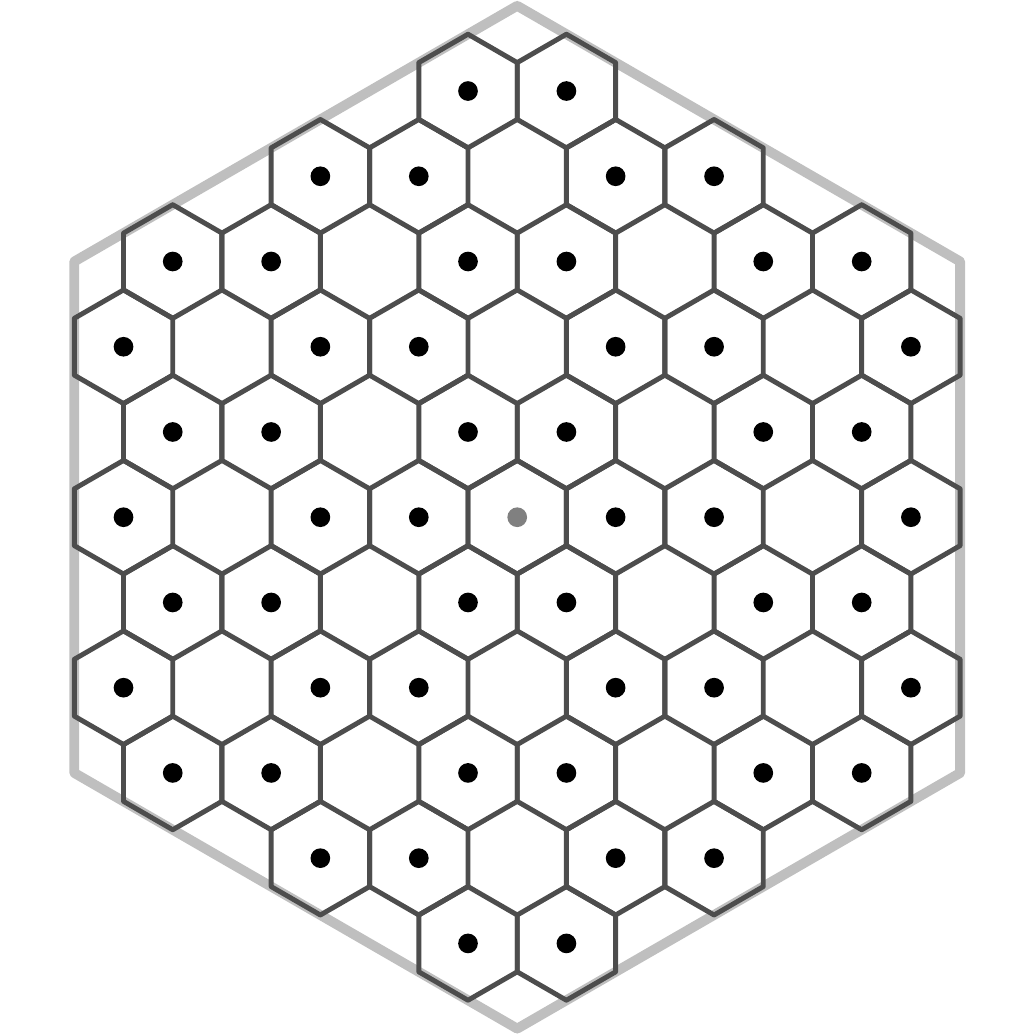}
    \label{fig:ds:p3q3w4}}

  \caption[Digit sets for different $\tau$ and $w$]{Minimal norm
    representatives digit sets modulo $\tau^w$. For each digit $\eta$, the
    corresponding Voronoi cell $V_\eta$ is drawn. The large scaled Voronoi cell
    is $\tau^w V$.}
  \label{fig:digit-sets-pgeq3-p3q3}
\end{figure}


\begin{corollary}\label{cor:optimality-koblitz3}
  With the setting above, the width\nbd-$w$ non-adjacent form expansion for
  each element of $\Ztau$ is optimal.
\end{corollary}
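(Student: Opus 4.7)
The plan is to reduce everything to Theorem~\ref{thm:optimality-thm} by verifying that the minimal norm representatives digit set modulo $\tau^w$ is \wsubadditive{} for every $w \geq 2$ when $(|p|,q)=(3,3)$. Since the \wNAF{} number system in question already provides a \wNAF{}-expansion for every $z \in \Ztau$ (Theorem~\ref{thm:existence-uniquness-wnaf}), \wsubadditivity{} will directly yield optimality. The case $w \geq 4$ is immediate from Lemma~\ref{lem:opt-techn-condition}~\itemref{enu:tc-w4}, so the real work lies in $w \in \{2,3\}$. A quick check shows that the coarse estimate $\tau^{w-1} V + 2V \subseteq \tau^w \interior{V}$ used in the proof of Lemma~\ref{lem:opt-techn-condition} fails here, so Proposition~\ref{pro:suff-cond-wsubadditive} cannot be applied with $U = \tau^w V$.

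For $w \in \{2,3\}$ I would instead exploit the very nice geometry of the case $(|p|,q)=(3,3)$: $\tau$ has modulus $\sqrt 3$ and argument $\pi/6$ (for $\mu=1$), so $\Ztau$ is the hexagonal Eisenstein-type lattice and $V$ is a regular hexagon with a large rotational/reflectional symmetry group. The idea is to replace the ambient set $U$ by the (much smaller) convex hull $U_0$ of $\cD \cup (-\cD)$ inside $\tau^w V$, observe that $U_0 \subseteq \tau U_0$ still holds (because multiplying by $\tau$ scales by $\sqrt 3$ and rotates by $\pi/6$, mapping the hexagonal hull onto a strictly larger one), and then show $(\tau^{w-1} U_0 + U_0 + U_0) \cap \tau^w \Ztau \subseteq S \cup \{0\}$, with $S$ being the (explicit, finite) set of lattice points in $\tau^w \interior{V}$ that are values of weight\nbd-$1$ \wNAF{} singletons. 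The set $S$ is easy to describe using the six-fold symmetry of the digit set, cf.\ Figure~\ref{fig:digit-sets-pgeq3-p3q3}.

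In carrying this out I would split the finitely many pairs of digits $(c,0),(d,n)$ with $n\in\{0,\dots,w-1\}$ into a handful of orbits under the symmetry group of the digit set, and verify the desired weight\nbd-$2$ \wNAF{}-expansion of $c+\tau^n d$ on one representative per orbit. For $w=2$ there are only $w(\card \cD - 1)^2 = 2 \cdot 6^2 = 72$ pairs (reduced to a handful by symmetry), and for $w=3$ there are $3 \cdot 18^2 = 972$ pairs, again reduced drastically by the hexagonal symmetry; hence the verification is finite and in principle a direct (even computer-assisted) case distinction, in the spirit of Kröll~\cite{Kroell:ta:optim-of}. Combining the three ranges, every $w \geq 2$ gives a \wsubadditive{} digit set, and Theorem~\ref{thm:optimality-thm} completes the proof.

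The main obstacle is the tightness of the $w=2$ case: the standard disc bound does not close, so one really has to use the hexagonal shape of $V$ rather than only its circumradius $\abs V$. I expect the bookkeeping of which residue classes mod $\tau^w$ the sums $\tau^n d + c$ fall into — and exhibiting an explicit weight\nbd-$2$ \wNAF{} rewriting in each class — to be the most delicate step, although it is purely combinatorial once the symmetry reduction is in place.
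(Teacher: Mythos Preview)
Your overall plan is correct and would work: the reduction to Theorem~\ref{thm:optimality-thm} is exactly right, $w\geq4$ is indeed covered by Lemma~\ref{lem:opt-techn-condition}~\itemref{enu:tc-w4}, and your observation that the circumradius bound $T(3,3,w)<1$ fails for $w\in\{2,3\}$ is accurate. For those two remaining cases your finite symmetry-reduced verification would certainly succeed.

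The paper, however, disposes of $w\in\{2,3\}$ much more cheaply. Instead of a case distinction over digit pairs, it uses the explicit description of the digit set from Avanzi--Heuberger--Prodinger~\cite{Avanzi-Heuberger-Prodinger:2010:arith-of}: for $w=2$ the non-zero digits are exactly the sixth roots of unity (so $\max_{d\in\cD}\abs d=1$), and for $w=3$ they are $\zeta^k\cdot\{1,2,4-\mu\tau\}$ (so $\max_{d\in\cD}\abs d=\sqrt 7$). With these sharp radii one gets, for every $k\in\{0,\dots,w-1\}$, the one-line disc inclusions
\[
\tau^k\cD+\cD+\cD\subseteq\ballc{0}{\sqrt3+2}\subseteq\tau^{4}\interior V
\quad(w=2),\qquad
\tau^k\cD+\cD+\cD\subseteq\ballc{0}{5\sqrt7}\subseteq\tau^{6}\interior V
\quad(w=3),
\]
and then the argument at the start of the proof of Lemma~\ref{lem:opt-techn-condition} (essentially Proposition~\ref{pro:suff-cond-wsubadditive}) gives \wsubadditivity{} immediately. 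So the paper never needs the hexagonal shape of $V$, the convex hull $U_0$, or any orbit enumeration; the crude disc bound already closes once you replace the generic estimate $\cD\subseteq\tau^wV$ by the actual, much smaller, digit radius. Your idea of shrinking $U$ to the convex hull of $\cD$ is morally the same move, but you stopped short of noticing that the resulting radius estimate alone already suffices, and instead fell back to a combinatorial check that the paper avoids entirely.
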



\begin{proof}
  Using the statement of Lemma~\ref{lem:opt-techn-condition} and
  Theorem~\ref{thm:optimality-thm} yields the optimality for all $w\geq4$. 

  Let $w=2$. Then our minimal norm representatives digit set is
  \begin{equation*}
    \cD = \set{0} \cup \bigcup_{0 \leq k < 6} \zeta^k \set{1},
  \end{equation*}
  where $\zeta$ is a primitive sixth root of unity, see Avanzi, Heuberger and
  Prodinger~\cite{Avanzi-Heuberger-Prodinger:2010:arith-of}. Therefore we
  obtain $\abs{\cD} = 1$ and $\cD = -\cD$. For $k\in\set{0,1}$ we get
  \begin{equation*}
    \tau^k\cD + \cD + \cD 
    \subseteq \ballc{0}{\sqrt{3}+2} 
    \subseteq \sqrt{3}^4 \ball{0}{\frac12} 
    \subseteq \tau^{2w} \interior{V},
  \end{equation*}
  so the digit set $\cD$ is \wsubadditive{} by the same arguments as in the
  beginning of the proof of Lemma~\ref{lem:opt-techn-condition}, and we can
  apply the Optimality Theorem to get the desired result.

  Let $w=3$. Then our minimal norm representatives digit set is
  \begin{equation*}
    \cD = \set{0} \cup \bigcup_{0 \leq k < 6} \zeta^k \set{1,2,4-\mu\tau},
  \end{equation*}
  where $\zeta$ is again a primitive sixth root of unity,
  again~\cite{Avanzi-Heuberger-Prodinger:2010:arith-of}. Therefore we obtain
  $\abs{\cD} = \abs{4-\mu\tau} = \sqrt{7}$ and again $\cD = -\cD$. For
  $k\in\set{0,1,2}$, we get
  \begin{equation*}
    \tau^k\cD + \cD + \cD 
    \subseteq \ballc{0}{5\sqrt{7}} 
    \subseteq \sqrt{3}^6 \ball{0}{\frac12} 
    \subseteq \tau^{2w} \interior{V},
  \end{equation*}
  so we can use Theorem~\ref{thm:optimality-thm} again to get the
  optimality.
\end{proof}


\section{The $p$-is-$2$-$q$-is-$2$-Case}
\label{sec:case-p-2-q-2}


In this section we look at another special base~$\tau$. We assume that
$p\in\set{-2,2}$ and $q=2$. Again, we continue looking at \wNAF{}-number
systems with minimal norm representative digit set modulo $\tau^w$ with
$w\geq2$. Some examples of those digit sets are shown in
Figure~\ref{fig:digit-sets-p2q2}.


\begin{figure}
  \centering 
  \subfloat[Digit set for $\tau=1+i$ and $w=2$.]{
    \includegraphics[width=0.2\linewidth]{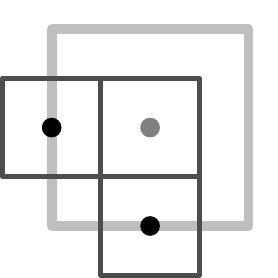}
    \label{fig:ds:p2q2w2}}
  \quad
  \subfloat[Digit set for $\tau=1+i$ and $w=3$.]{
    \includegraphics[width=0.2\linewidth]{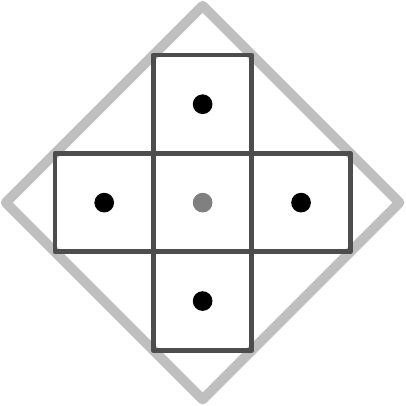}
    \label{fig:ds:p2q2w3}}
  \quad
  \subfloat[Digit set for $\tau=1+i$ and $w=4$.]{
    \includegraphics[width=0.2\linewidth]{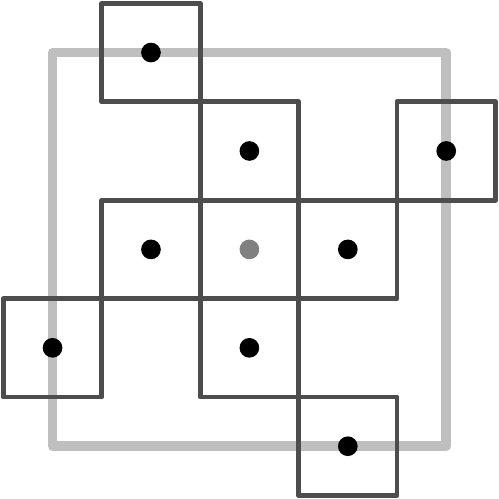}
    \label{fig:ds:p2q2w4}}
  \quad
  \subfloat[Digit set for $\tau=1+i$ and $w=5$.]{
    \includegraphics[width=0.2\linewidth]{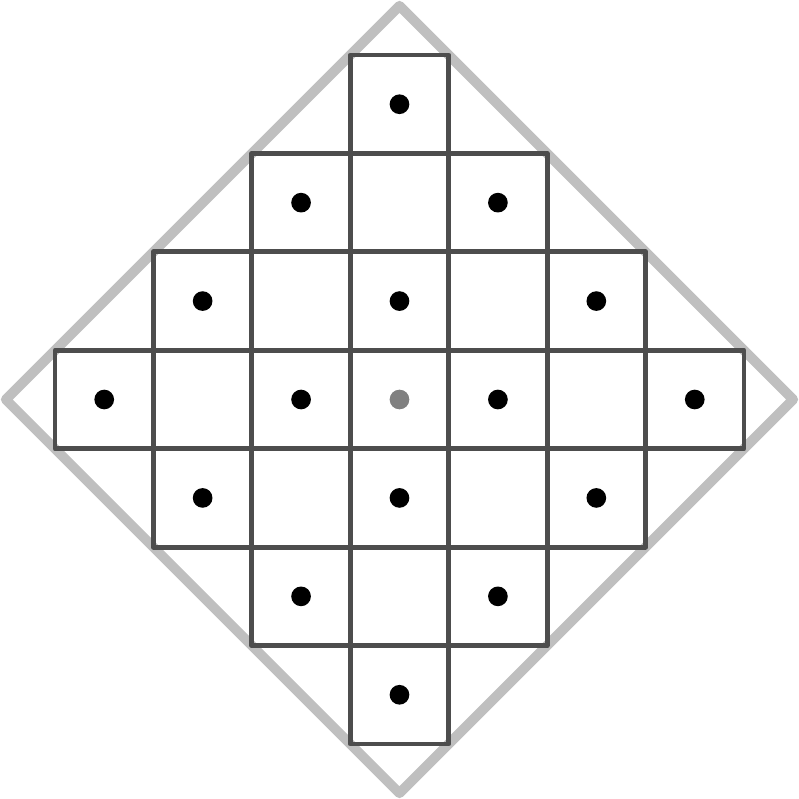}
    \label{fig:ds:p2q2w5}}
  \quad

  \caption[Digit sets for different $\tau$ and $w$]{Minimal norm
    representatives digit sets modulo $\tau^w$. For each digit $\eta$, the
    corresponding Voronoi cell $V_\eta$ is drawn. The large scaled Voronoi cell
    is $\tau^w V$.}
  \label{fig:digit-sets-p2q2}
\end{figure}


For all possible $\tau$ of this section, the corresponding Voronoi cell can be
written explicitly as
\begin{equation*}
  V = \f{\mathsf{polygon}}{\set{\tfrac12(1+i), \tfrac12(-1+i),
      \tfrac12(-1-i), \tfrac12(1-i)}}.
\end{equation*}
Remark that $V$ is an axis-parallel square and that we have
\begin{equation*}
  \tau V = \f{\mathsf{polygon}}{\set*{i^j}{j\in\set{0,1,2,3}}}. 
\end{equation*}

In this section we will prove that the \wNAF{}s are optimal if and only if $w$
is odd. The first part, optimality for odd $w$, is written down as the theorem
below. The non-optimality part for even $w$ can be found as
Proposition~\ref{pro:p-2-q-2-w-even-non-opt}.


\begin{theorem}\label{thm:p-2-q-2-w-odd-optimal}
  Let $w$ be an odd integer with $w\geq3$, and let $z\in\Ztau$. Then the
  width\nbd-$w$ non-adjacent form expansion of $z$ is optimal.
\end{theorem}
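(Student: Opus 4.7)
The plan is to invoke the Optimality Theorem~\ref{thm:optimality-thm}: since the underlying pre-number system is a \wNAF{} number system by Theorem~\ref{thm:existence-uniquness-wnaf}, it suffices to show that the digit set $\cD$ is \wsubadditive{}. Equivalently, for any non-zero digits $c, d \in \cD$ and any $n \in \set{0, \dots, w-1}$, the sum $y := \tau^n d + c$ should admit a \wNAF{}-expansion of weight at most~$2$. By symmetry one may assume $\tau = 1 + i$, so that $V$ is the axis-parallel unit square centred at~$0$.

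I would apply Proposition~\ref{pro:suff-cond-wsubadditive}. A key geometric observation for odd $w$ is that $\tau^{w-1} = (2i)^{(w-1)/2}$ preserves the axis-parallel orientation of $V$ (rotation by a multiple of $90^\circ$ combined with the scaling by $2^{(w-1)/2}$), so $\tau^{w-1} V = 2^{(w-1)/2} V$ is again an axis-parallel square, whereas $\tau^w V$ is a rotated ``diamond''. This orientation mismatch is precisely what distinguishes odd $w$ (optimality) from the even case (non-optimality, cf.\ Proposition~\ref{pro:p-2-q-2-w-even-non-opt}). Choosing $U = \tau^w V$ in the proposition is too loose, however: the set $\tau^{w-1} V + 2V$ contains Gaussian integers (for $w=3$ already, e.g.\ $\pm 2 \pm i$ and $\pm 1 \pm 2i$) that are not of the required form $\tau^j d'$ with $d' \in \cD$. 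I would instead take $U = \ballc{0}{r}$ with $r = \max_{d \in \cD} \abs{d}$; this still satisfies $\cD \cup (-\cD) \subseteq U$ and $U \subseteq \tau U$ since $\abs{\tau} = \sqrt{2} > 1$, and shrinks the critical region to $\ballc{0}{r(\abs{\tau}^{-1} + 2\abs{\tau}^{-w})}$.

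For the base case $w = 3$, one has $r = 1$ and the bound becomes $\abs{a} \leq \sqrt{2}$; the nine Gaussian integers in this disc, namely $\set{0, \pm 1, \pm i, \pm(1+i), \pm(1-i)}$, are each either zero, a digit from $\set{\pm 1, \pm i}$, or a unit multiple $\pm\tau$, $\pm i\tau$ of~$\tau$, hence a singleton. For general odd $w \geq 3$, I would iterate this approach, stepping $w$ by~$2$ and using $\tau^2 = 2i$ to relate the digit sets at levels $w$ and $w-2$, together with the symmetries $\cD = -\cD = i\cD$ (inherited from the corresponding symmetries of $\tilde V$) to reduce the number of cases. The principal obstacle will be to verify that every Gaussian integer $a$ within the derived disc bound, when written as $a = \tau^j b$ with $b$ not divisible by $\tau$, has $b$ lying in $\cD$; this rests on the minimal-norm property of the representatives in $\cD$ and a careful comparison between the disc radius and the inradius of $\tau^w \tilde V$.
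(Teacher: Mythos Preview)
Your overall strategy---reduce to \wsubadditivity{} via Theorem~\ref{thm:optimality-thm} and verify the hypothesis of Proposition~\ref{pro:suff-cond-wsubadditive}---is exactly right, and your treatment of $w=3$ with $U=\ballc{0}{1}$ is correct. Your observation that the naive choice $U=\tau^wV$ fails in Proposition~\ref{pro:suff-cond-wsubadditive} is also accurate: the axis-parallel square $\tau^{w-1}V+2V$ pokes out of the diamond $\tau^wV$ near its edges.

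The gap is in the general step. The plan to ``iterate, stepping $w$ by~$2$'' and relate $\cD$ at levels $w$ and $w-2$ is neither needed nor easy to make precise, and you leave the ``principal obstacle'' unresolved. In fact your disc approach already works \emph{directly} for every odd $w$, once you compute $r$ explicitly. Since for odd $w$ the region $\tau^w V$ is the diamond $\{x+iy:|x|+|y|\le 2^{(w-1)/2}\}$ and all lattice points on its boundary are divisible by $\tau$ (their coordinates have even sum), one finds $r=2^{(w-1)/2}-1$. Then $R:=r(|\tau|^{-1}+2|\tau|^{-w})<2^{(w-2)/2}+2^{-1/2}$, while the closest lattice point \emph{outside} the diamond with $x+y$ odd has $|x|+|y|=2^{(w-1)/2}+1$ and squared norm at least $2^{w-2}+2^{(w-1)/2}+1>R^2$. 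Hence every $a\in\ballc{0}{R}\cap\Ztau$ not divisible by $\tau$ already lies in $\tau^w\wt V$, i.e.\ is a digit, and the singletons condition follows. So your framework is sound; what is missing is this one-line arithmetic comparison, not an induction.

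For contrast, the paper takes a different route: it keeps the diamond $\tau^wV$ as the target and splits by the exponent $k$. For $k\le w-2$ it uses the nesting $\tau^{w-2}V+2V\subseteq\tau^wV$ (both are similar diamonds, inradii differing by $\tfrac12|\tau|^{w-2}\ge\sqrt2$). For the delicate case $k=w-1$ it exploits the divisibility constraint coming from the intersection with $\tau^w\Ztau$: writing $y=b+a$ with $b\in\cD$ and $a\in\tau^{-(w-1)}(\cD+\cD)\subseteq 2\tau V$, the requirement $\tau\mid y$ together with $\tau\nmid b$ forces $\tau\nmid a$, and the only such elements of $2\tau V\cap\Ztau$ are the units $\pm1,\pm i$; adding a unit to a digit in the interior of $\tau^wV$ stays inside. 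Your disc argument trades this case split for a single norm estimate, at the cost of needing the exact value of $r$; the paper's argument avoids computing $r$ but needs the bespoke divisibility observation for $k=w-1$.
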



\begin{remark}
  Let $w$ be an odd integer with $w\geq3$. Let $z \in \tau^w V \cap \Ztau$,
  then $z$ can be represented as a \wNAF{} expansion with weight at most
  $1$. To see this, consider the boundary of $\tau^wV$. Its vertices are
  $2^{(w-1)/2} i^m$ for $m\in\set{0,1,2,3}$. All elements of
  $\boundary{\tau^wV}\cap\Ztau$ can be written as $2^{(w-1)/2} i^m + k(1+i)i^n$
  for some integers $k$, $m$ and $n$. Further, all those elements are
  divisible by $\tau$. Therefore each digit lies in the interior of $\tau^wV$,
  and for each $z \in \tau^w V \cap \Ztau$ there is an integer $\ell\geq0$ such
  that $\tau^{-\ell}z\in\cD$, because $\tau^{-1}V\subseteq V$ and
  $\abs{\tau}>1$.
\end{remark}


\begin{proof}[Proof of Theorem~\ref{thm:p-2-q-2-w-odd-optimal}]
  We prove that the digit set~$\cD$ is \wsubadditive{}. Hence, optimality
  follows using Theorem~\ref{thm:optimality-thm}. Using the remark above,
  $\cD=-\cD$ and the ideas of Proposition~\ref{pro:suff-cond-wsubadditive}, it
  is sufficient to show
  \begin{equation*}
    \tau^{-w} \left( \tau^k\cD + \cD + \cD \right) \cap \Ztau
    \subseteq \tau^wV
  \end{equation*}
  for $k\in\set{0,\dots,w-1}$.

  Let $k=w-1$. We show that
  \begin{equation}\label{eq:p-2-q-2:inclusion}
    \left( \cD + \tau^{-(w-1)} \left(\cD + \cD\right) \right) \cap \tau\Ztau
    \subseteq \tau^{w+1}V.
  \end{equation}
  So let $y=b+a$ be an element of the left hand side
  of~\eqref{eq:p-2-q-2:inclusion} with $b\in\cD$ and $a\in\tau^{-(w-1)}
  \left(\cD + \cD\right)$. We can assume $y\neq0$. Since $y\in\Ztau$ and $\cD
  \subseteq \Ztau$, we have $a\in\Ztau$. Since $\cD\subseteq \tau^wV$, we
  obtain
  \begin{equation*}
    \tau^{-(w-1)} \left(\cD + \cD\right) \subseteq 2 \tau V.
  \end{equation*}
  Because $2\tau V = \tau^3V \subseteq \tau^wV$, we can assume $b\neq0$. This
  means $\tau\ndivides b$. Since $\tau \divides y$, we have $\tau\ndivides a$.
  The set $2 \tau V \cap \Ztau$ consists exactly of $0$, $i^m$, $2i^m$ and
  $\tau i^m$ for $m\in\set{0,1,2,3}$. The only elements in that set not
  divisible by $\tau$ are the $i^m$. Therefore $a=i^m$ for some $m$. The digit
  $b$ is in the interior of $\tau^wV$, thus $y=b+a$ is in $\tau^wV \subseteq
  \tau^{w+1}V$.

  Now let $k\in\set{0,\dots,w-2}$. If $w\geq5$, then
  \begin{equation*}
    \tau^{-w} \left( \tau^k\cD + \cD + \cD \right) 
    \subseteq \tau^{w-2}V + 2V,
  \end{equation*}
  using $\cD\subseteq \tau^wV$ and properties of the Voronoi cell $V$.
  Consider the two squares $\tau^{w-2}V$ and $\tau^wV = 2\tau^{w-2}V$. The
  distance between the boundaries of them is at least $\tfrac12
  \abs\tau^{w-2}$, which is at least $\sqrt{2}$. Since $2V$ is contained in a
  disc with radius $\sqrt{2}$, we obtain $\tau^{w-2}V + 2V \subseteq \tau^wV$.

  We are left with the case $w=3$ and $k\in\set{0,1}$. There the digit set $\cD$
  consists of $0$ and $i^m$ for $m\in\set{0,1,2,3}$. Therefore we have
  $\cD\subseteq\tau V$ (instead of $\cD\subseteq\tau^3 V$). By the same
  arguments as in the previous paragraph we get
  \begin{equation*}
    \tau^{-3} \left( \tau^k\cD + \cD + \cD \right) 
    \subseteq \tfrac12 \left( \tau V + 2V \right) 
    \subseteq \tau^3 V,
  \end{equation*}
  so the proof is complete.
\end{proof}


The next result is the non-optimality result for even $w$. 


\begin{proposition}\label{pro:p-2-q-2-w-even-non-opt}
  Let $w$ be an even integer with $w\geq2$. Then there is an element of $\Ztau$
  whose \wNAF{}-expansion is non-optimal.
\end{proposition}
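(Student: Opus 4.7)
The plan is to apply Theorem~\ref{thm:optimality-thm} in contrapositive form: producing an element of $\Ztau$ with non-optimal \wNAF{} is equivalent to showing that the digit set $\cD$ is not \wsubadditive{}. Thus it suffices to exhibit digits $c, d \in \cD \setminus \set{0}$ and a position $n \in \set{0, \dots, w-1}$ such that the value $z := c + d \tau^n$ of the weight-$2$ multi-expansion with singletons $(c,0)$ and $(d,n)$ has its unique \wNAF{}-expansion (which exists by Theorem~\ref{thm:existence-uniquness-wnaf}) of weight at least~$3$.

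The algebraic source of the obstruction is that for $p\in\set{-2,2}$ and $q=2$ we have $\tau^2 = 2\mu i$ with $\mu = p/2$, so $\tau^w = (2\mu i)^{w/2}$ is purely real when $w \equiv 0 \pmod 4$ and purely imaginary when $w \equiv 2 \pmod 4$. In particular $\tau^w \Ztau = 2^{w/2}\Ztau$ is an axis-aligned sublattice, and the minimum-norm residue representative selected by the \wNAF{}-algorithm tends to sit on one particular side of $\tau^w\widetilde V$. A suitably chosen weight-$2$ multi-expansion can then produce a value whose first carry, after division by $\tau^w$, lands on the ``wrong'' side of its residue class, forcing a further non-trivial carry whose own reduction contributes yet another non-zero digit.

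For the base case $w = 2$, I take $c = d$ equal to the digit representing the residue class of $i$ modulo $\tau^2$ (tied between $\pm i$; the restricted Voronoi cell picks one) and $n = 1$, giving $z := c(1+\tau)$. Direct iteration of the \wNAF{}-algorithm --- subtract $\eta_0$, divide by $\tau^2$, repeat --- shows that the successive carries cycle through residue classes whose chosen representatives fail to telescope, so the resulting \wNAF{} has weight~$4$; hence $z$ is non-optimal. For general even $w \geq 4$, analogous counterexamples can be produced from outer-layer digits of $\cD$: one chooses $c, d$ whose residue after one step of the algorithm lies in a class whose chosen representative is on the opposite side of its Voronoi cell, forcing a secondary carry which is itself a digit. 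For instance, for $w = 4$ the triple $c = 2+i$, $d = 1-2\mu i$, $n = 3$ gives a $z$ whose \wNAF{} has non-zero digits at positions $0$, $w$, and $2w$, hence weight~$3$. Similar explicit examples can be given for each even $w \geq 6$.

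The main obstacle is the absence of a single uniform construction valid for all even $w$: the suitable triple $(c, d, n)$ depends on $w$, and in tied-norm classes it depends on which representative the restricted Voronoi cell $\widetilde V$ selects. By Remark~\ref{rem:choice-digit-set-voronoi-boundary} the qualitative statement is independent of the admissible configuration of $\widetilde V$, but the concrete counterexample is not. A cleaner route might distinguish $w \equiv 0 \pmod 4$ from $w \equiv 2 \pmod 4$ and exhibit a separate explicit family for each, exploiting respectively the reality or pure imaginarity of $\tau^w$. In every case the verification reduces to a finite computation using the \wNAF{}-algorithm together with the uniqueness assertion of Theorem~\ref{thm:existence-uniquness-wnaf}.
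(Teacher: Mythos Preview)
Your overall strategy---using the contrapositive of Theorem~\ref{thm:optimality-thm} to reduce the problem to exhibiting a failure of \wsubadditivity{}---is exactly right and is what the paper does. However, the execution has genuine gaps.

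For $w=2$, your proposed counterexample does not work. Taking $\tau=1+i$, the digit in the residue class of $i$ modulo $\tau^2=2i$ is $c=-i$ (the digit set is $\{0,-1,-i\}$), so $z=c(1+\tau)=-i(2+i)=1-2i$. Running the \wNAF[2]{}-algorithm gives $\eta_0=-1$, carry $-2i$; $\eta_1=0$, carry $-1-i$; $\eta_2=0$, carry $-1$; $\eta_3=-1$, carry $0$. Thus the \wNAF[2]{} of $z$ is $(-1)00(-1)$, which has weight~$2$, not~$4$. So $z$ does \emph{not} witness a failure of \wsubadditivity[2]{}. The paper's counterexample for $w=2$ is instead $z=-\tau-1$, whose \wNAF[2]{} is $(-i)0(-1)0(-i)0(-i)$ of weight~$4$ (Lemma~\ref{lem:p-2-q-2-w-2}).

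For even $w\ge 4$, you give a candidate only for $w=4$ (which itself involves boundary digits whose membership in $\cD$ depends on the tie-breaking rule and which you do not verify), and then assert that ``similar explicit examples can be given for each even $w\ge 6$'' without supplying them. That is not a proof. More importantly, your claim that ``the main obstacle is the absence of a single uniform construction valid for all even $w$'' is mistaken: the paper provides exactly such a uniform construction in Lemma~\ref{lem:p-2-q-2-w-even}. Setting $A=\abs\tau^{w}\tfrac12(1-i)$, $B=A/\tau$ and $s=-i^{1-w/2}$, one verifies that $A-1$, $-B-1$, $i\tau^{w-1}-s^{-1}$ and $s$ are all digits, and that
\[
(A-1)\tau^{w-1}+(-s^{-1})=s\tau^{2w}+(-B-1)\tau^{w}+(i\tau^{w-1}-s^{-1}),
\]
giving a weight-$2$ expansion on the left whose \wNAF{} on the right has weight~$3$, uniformly for every even $w\ge 4$. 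Your proposal would need either to discover this (or an analogous) family, or to replace the hand-wave for $w\ge 6$ with an actual argument.
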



Again, some examples of the digit sets used are shown in
Figure~\ref{fig:digit-sets-p2q2}. The proof of the proposition is split up:
Lemma~\ref{lem:p-2-q-2-w-even} handles the general case for even $w\geq4$ and
Lemma~\ref{lem:p-2-q-2-w-2} gives a counter-example (to optimality) for $w=2$.

For the remaining section---it contains the proof of
Proposition~\ref{pro:p-2-q-2-w-even-non-opt}---we will assume $\tau=1+i$. All
other cases are analogous.


\begin{lemma}\label{lem:p-2-q-2-w-even}
  Let the assumptions of Proposition~\ref{pro:p-2-q-2-w-even-non-opt} hold and
  suppose $w\geq4$. Define $A := \abs\tau^w\frac12(1-i)$ and $B := \frac1\tau
  A$ and set $s=-i^{1-w/2}$. Then
  \begin{enumerate}[(a)]
  \item $1$, $i$, $-1$ and $-i$ are digits,
  \item $A-1$ is a digit,
  \item $-B-1$ is a digit,
  \item $i\tau^{w-1}-s^{-1}$ is a digit, and
  \item we have
    \begin{equation*}
      (A-1) \tau^{w-1} + (-s^{-1}) 
      = s\tau^{2w} + (-B-1)\tau^w + (i\tau^{w-1}-s^{-1}).
    \end{equation*}
  \end{enumerate}
\end{lemma}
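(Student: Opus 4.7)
The plan is to verify (a)--(d) by the standard digit-membership criterion---a candidate $z$ is a digit iff $z\in\Ztau=\Z[i]$, $z$ is not divisible by $\tau$, and $z\in\tau^w\wt V$ (Definition~\ref{def:restr-voronoi})---and to prove (e) by direct algebraic manipulation from $\tau^2=2i$. Since $\tau^w=2^{w/2}i^{w/2}$, the Voronoi cell $\tau^wV$ is the axis-parallel square $[-2^{w/2-1},2^{w/2-1}]^2$, and divisibility by $\tau$ amounts to real part plus imaginary part being even.

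For (a), the elements $\pm 1,\pm i$ have modulus $1<2^{w/2-1}$ (using $w\geq 4$) and odd coordinate sum, so they lie strictly inside $\tau^wV$ and are digits. For (b), $A-1=(2^{w/2-1}-1)-2^{w/2-1}i$ has coordinate sum $-1$ and sits on the bottom edge of $\tau^wV$; for (c), $-B-1=-1+2^{w/2-1}i$ has coordinate sum $2^{w/2-1}-1$, which is odd because $2^{w/2-1}$ is even for $w\geq 4$, and sits on the top edge. For (d), a short computation gives $s^{-1}=-i^{w/2-1}=i^{w/2+1}=:\epsilon$ (a unit) and $i\tau^{w-1}=2^{w/2-1}\epsilon(1-i)$, hence $i\tau^{w-1}-s^{-1}=\epsilon(A-1)$; thus (d) is (b) rotated by a unit, and one verifies directly that the rotated point still lies in $\tau^w\wt V$.

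For (e), the key observation is $B=A/\tau$, which yields $B\tau^w=A\tau^{w-1}$ and cancels against one copy of $A\tau^{w-1}$ on the left-hand side. Using $1+i=\tau$, the contribution $\tau^w-\tau\cdot\tau^{w-1}$ also vanishes, collapsing the identity to $2A\tau^{w-1}=s\tau^{2w}$, or equivalently $s\tau^{w+2}=2^{w/2+1}$. Substituting $s=-i^{1-w/2}$ together with $\tau^{w+2}=2^{w/2+1}i^{w/2+1}$ and using $i^{1-w/2}\cdot i^{w/2+1}=i^{2}=-1$, the left-hand side becomes $2^{w/2+1}$, as required.

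The main obstacle is the boundary bookkeeping in (b), (c), and (d): each candidate lies on the boundary of $\tau^wV$, so the tie-breaking rules of Definition~\ref{def:restr-voronoi} must be invoked to confirm membership in $\tau^w\wt V$ rather than in a neighbouring cell. The particular edge or half-edge on which the point lands depends on the rotation factor $\epsilon=i^{w/2+1}$, hence on $w\bmod 8$, giving a small finite case analysis that I would handle by transporting the point back into $V$ via multiplication by $\tau^{-w}$ and applying the boundary rules there. Once this is settled, all remaining steps are routine arithmetic in $\Z[i]$.
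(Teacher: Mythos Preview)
Your proposal is correct and follows essentially the same approach as the paper: verify digit membership via the restricted Voronoi cell for (a)--(d), reduce (d) to (b) via the unit rotation $i\tau^{w-1}-s^{-1}=s^{-1}(A-1)$, and prove (e) by direct algebra collapsing to $2A=s\tau^{w+1}$. The one simplification the paper exploits that you overlook is that the open half-edge segments in Definition~\ref{def:restr-voronoi} are already $90^\circ$ rotationally symmetric for the square $V$, and since $A-1$, $-B-1$, and $s^{-1}(A-1)$ all land strictly inside such half-edges (never at a midpoint or vertex, as their relevant coordinate lies strictly between $0$ and $2^{w/2-1}$), no case analysis on $w\bmod 8$ is actually needed.
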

 

\begin{figure}
  \centering
  \includegraphics{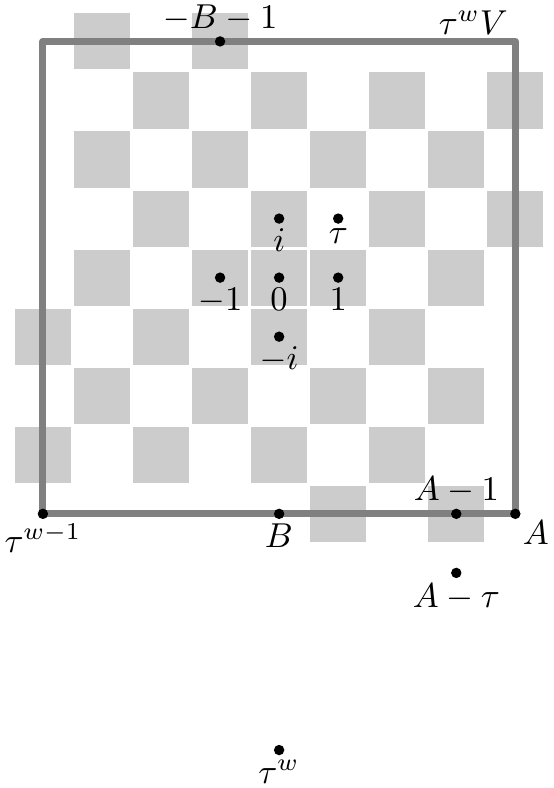}
  \caption{The $w$-is-even situation. The figure shows the configuration
    $p=2$, $q=2$, $w=6$, $s=1$. A polygon filled grey represents a digit, a dot
    represents a point of interest in Lemma~\ref{lem:p-2-q-2-w-even}.}
  \label{fig:nonopt-p2q2weven}
\end{figure}

Figure~\ref{fig:nonopt-p2q2weven} shows the digits used in
Lemma~\ref{lem:p-2-q-2-w-even} for a special configuration. 


\begin{proof}
  \begin{enumerate}[(a)]

  \item A direct calculation shows that the lattice elements $1$, $i$, $-1$
    and $-i$ are in the interior of
    \begin{equation*}
      \tau^wV 
      = \intervalcc{-2^{w/2-1}}{2^{w/2-1}} + \intervalcc{-2^{w/2-1}}{2^{w/2-1}} i
    \end{equation*}
    and are not divisible by $\tau$. So all of them are digits.

  \item We can rewrite $A$ as
    \begin{equation*}
      A = 2^{w/2-1} (1-i) = - 2^{w/2-1} i \tau,
    \end{equation*}
    therefore $\tau^2 \divides A$. We remark that $A$ is a vertex (the
    lower-right vertex) of the scaled Voronoi cell $\tau^wV$ and that the edges
    of $\tau^wV$ are parallel to the real and imaginary axes. This means that
    $A-1$ is on the boundary, too, and its real part is larger than $0$. By
    using the construction of the restricted Voronoi cell, cf.\
    Definition~\ref{def:restr-voronoi}, we know that $A-1$ is in
    $\tau^w\wt{V}$. Since it is clearly not divisible by $\tau$, it is a digit.

  \item We have
    \begin{equation*}
      B = \tfrac1\tau A = - 2^{w/2-1} i.
    \end{equation*}
    Therefore $\tau \divides B$, and we know that $B$ halves the edge at the
    bottom of the Voronoi cell $\tau^wV$. By construction of the scaled
    restricted Voronoi cell $\tau^w\wt{V}$, cf.\
    Definition~\ref{def:restr-voronoi}, we obtain that $B+1$ is a digit, and
    therefore, by symmetry, $-B-1$ is a digit, too.

  \item Rewriting yields
    \begin{equation*}
      i\tau^{w-1}-s^{-1} = s^{-1} (is\tau^{w-1}-1),
    \end{equation*}
    and we obtain
    \begin{equation*}
      s\tau^w 
      = -i^{1-w/2} (1+i)^w
      = - 2^{w/2} i,
    \end{equation*}
    since $(1+i)^2=2i$. Further we can check that the vertices of $\tau^wV$ are
    $i^k \tau^{w-1}$ for an appropriate $k\in\Z$. 

    Now consider $is\tau^{w-1}$. This is exactly the lower-right vertex $A$ of
    $\tau^wV$. Therefore, we have
    \begin{equation*}
      i\tau^{w-1}-s^{-1} = s^{-1} (A-1).
    \end{equation*}
    Using that $A-1$ is a digit and the rotational symmetry of the restricted
    Voronoi cell, $i\tau^{w-1}-s^{-1}$ is a digit.

  \item As before, we remark that $s\tau^w = -2^{w/2}i$. Therefore we obtain
    \begin{equation*}
      B-1-s\tau^w = -B-1.
    \end{equation*}
    Now, by rewriting, we get
    \begin{align*}
      (A-1) \tau^{w-1} + (-s^{-1})
      &= (A-\tau) \tau^{w-1} + (i\tau^{w-1}-s^{-1}) \\
      &= (B-1) \tau^w + (i\tau^{w-1}-s^{-1}) \\
      &= s\tau^{2w} + (B-1-s\tau^w) \tau^w + (i\tau^{w-1}-s^{-1}) \\
      &= s\tau^{2w} + (-B-1)\tau^w + (i\tau^{w-1}-s^{-1}),
    \end{align*}
    which was to prove.
    \qedhere
  \end{enumerate}
\end{proof}


\begin{lemma}\label{lem:p-2-q-2-w-2}
  Let the assumptions of Proposition~\ref{pro:p-2-q-2-w-even-non-opt} hold and
  suppose $w=2$. Then
  \begin{enumerate}[(a)]
  \item $-1$ and $-i$ are digits and
  \item we have
    \begin{equation*}
      - \tau - 1 = - i \tau^6 - \tau^4  - i \tau^2 - i.
    \end{equation*}
  \end{enumerate}
\end{lemma}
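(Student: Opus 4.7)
The plan is to verify both parts by direct computation, exploiting the fact that for $\tau=1+i$ everything is fully explicit.

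For part (a), I would first identify $\cD$. Since $\tau^2=2i$ and $(2i)=(2)$ as ideals in $\Ztau=\Z[i]$ (because $i$ is a unit), residues modulo $\tau^2$ coincide with residues modulo $2$. This yields four classes $0$, $1$, $i$, $1+i$, of which the class of $1+i=\tau$ consists of multiples of $\tau$ and is therefore excluded; only the classes of $1$ and $i$ need representatives. Rotating and scaling $V$ by $\tau^2=2i$ using the explicit vertices in Lemma~\ref{lem:voronoi-prop} (noting that $\re\tau=1\in\Z$ causes $V$ to degenerate to the unit square with vertices $\tfrac12(\pm1\pm i)$), the cell $\tau^2V$ is the axis-parallel square with vertices $\pm1\pm i$, whose side-midpoints are $\pm1,\pm i$. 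A direct inspection of Definition~\ref{def:restr-voronoi} with $m=4$ shows that the midpoints $v_{1/2}$ and $v_{3/2}$ of $\wt V$ scale up under $\tau^2$ to $-1$ and $-i$. Both have unit norm, both have odd sum of real and imaginary parts (hence are not divisible by $\tau$), so they are precisely the minimal-norm representatives of the classes $1$ and $i$; therefore $-1,-i\in\cD$.

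For part (b), I would simply compute. Using $\tau^2=2i$, $\tau^4=-4$, $\tau^6=-8i$,
\begin{equation*}
  -i\tau^6-\tau^4-i\tau^2-i
  = -i(-8i)-(-4)-i(2i)-i
  = -8+4+2-i
  = -2-i,
\end{equation*}
while the left-hand side is $-\tau-1=-(1+i)-1=-2-i$, establishing the identity.

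No real obstacle is expected. The only subtlety lies in part~(a), where one must be careful about which of the pairs $\{\pm1\}$ and $\{\pm i\}$ the convention of Definition~\ref{def:restr-voronoi} selects; by Remark~\ref{rem:choice-digit-set-voronoi-boundary} any other admissible choice would lead to an analogous identity with signs adjusted, so the intended application is insensitive to this convention. Once the lemma is proved, Proposition~\ref{pro:p-2-q-2-w-even-non-opt} in the case $w=2$ follows at once, because the right-hand side of~(b) is a \wNAF{}-expansion of $-2-i$ of weight~$4$ (unique by Theorem~\ref{thm:existence-uniquness-wnaf}), whereas $-\tau-1=(-1)\tau+(-1)$ furnishes an expansion of the same value of weight~$2$.
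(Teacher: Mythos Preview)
Your proposal is correct and follows essentially the same approach as the paper: for (a) you both locate $-1$ and $-i$ as edge-midpoints of $\tau^2V$ selected by the restricted Voronoi convention, and for (b) you verify the identity directly by evaluating powers of $\tau$, whereas the paper derives it by first noting the \wNAF[2]{} of $i$ and substituting---a cosmetic difference only.
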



\begin{proof}
  \begin{enumerate}[(a)]

  \item The elements $-1$ and $-i$ are on the boundary of the Voronoi cell
    $\tau^2V$, cf.\
    Figure~\ref{fig:digit-sets-p2q2}\subref{fig:ds:p2q2w2}. More precisely,
    each is halving an edge of the Voronoi cell mentioned. The construction of
    the restricted Voronoi cell, together with the rotation and scaling of
    $\tau^2=2i$, implies that $-1$ and $-i$ are in $\tau^2\wt{V}$. Since none of
    them is divisible by $\tau$, both are digits.

  \item The element $i$ has the \wNAF[2]{}-representation
    \begin{equation*}
      i = -i\tau^4 - \tau^2  - i.
    \end{equation*}
    Therefore we obtain
    \begin{equation*}
      - \tau - 1 
      = (-1+i) \tau + (i \tau - 1)
      = i \tau^2 + (-i)
      = -i\tau^6 - \tau^4  - i \tau^2 - i,
    \end{equation*}
    which was to show.
    \qedhere
  \end{enumerate}
\end{proof}


Finally, we are able to prove the non-optimality result.


\begin{proof}[Proof of Proposition~\ref{pro:p-2-q-2-w-even-non-opt}]
  Let $w\geq4$. Everything needed can be found in
  Lemma~\ref{lem:p-2-q-2-w-even}: We have the equation
  \begin{equation*}
    (A-1) \tau^{w-1} + (-s^{-1}) 
    = s\tau^{2w} + (-B-1)\tau^w + (i\tau^{w-1}-s^{-1}),
  \end{equation*}
  in which the left and the right hand side are both valid expansion (the
  coefficients are digits). The left hand side has weight~$2$ and is not a
  \wNAF{}, whereas the right hand side has weight~$3$ and is a \wNAF{}.

  Similarly the case $w=2$ is shown in Lemma~\ref{lem:p-2-q-2-w-2}: We have the
  equation
  \begin{equation*}
    - \tau - 1 = - i \tau^6 - \tau^4  - i \tau^2 - i,
  \end{equation*}
  which again is a counter-example to the optimality of the \wNAF[2]{}s.
\end{proof}


\section{The $p$-is-$0$-Case}
\label{sec:case-p-0}


This section contains another special base~$\tau$. We assume that $p=0$ and
that we have an integer $q\geq2$. Again, we continue looking at \wNAF{}-number
systems with minimal norm representative digit set modulo $\tau^w$ with
$w\geq2$. Some examples of the digit sets used are shown in
Figure~\ref{fig:digit-sets-p0}.


\begin{figure}
  \centering 
  \subfloat[Digit set for $\tau=i\sqrt{2}$ and $w=3$.]{
    \includegraphics[width=0.2\linewidth]{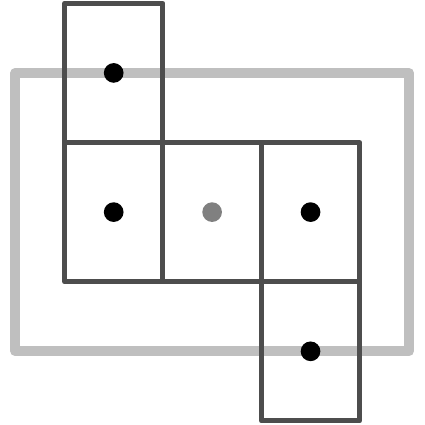}
    \label{fig:ds:p0q2w3}}
  \quad
  \subfloat[Digit set for $\tau=i\sqrt{2}$ and $w=5$.]{
    \includegraphics[width=0.2\linewidth]{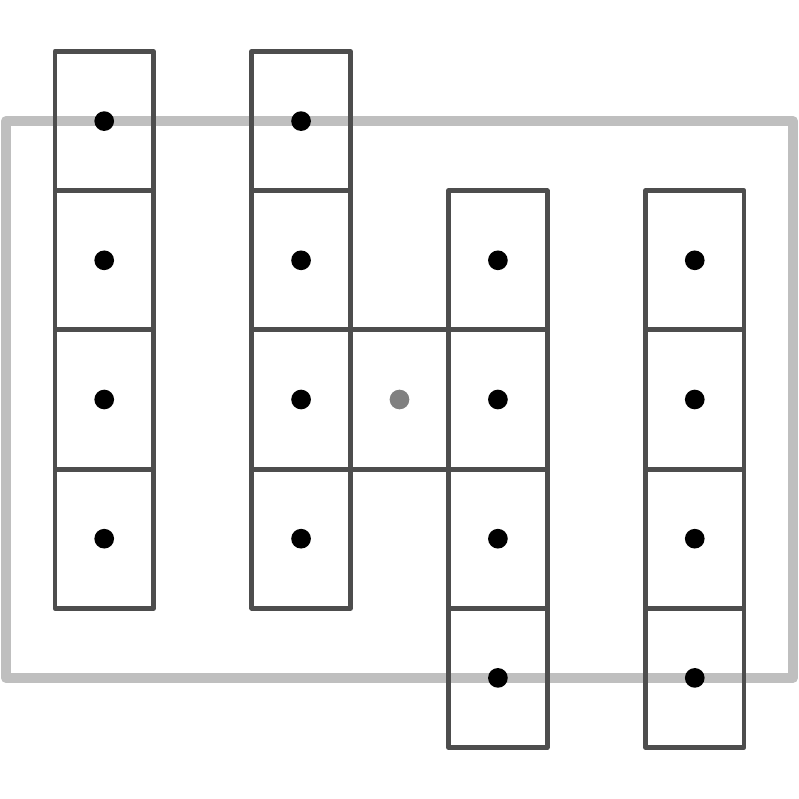}
    \label{fig:ds:p0q2w5}}
  \quad
  \subfloat[Digit set for $\tau=i\sqrt{3}$ and $w=3$.]{
    \includegraphics[width=0.2\linewidth]{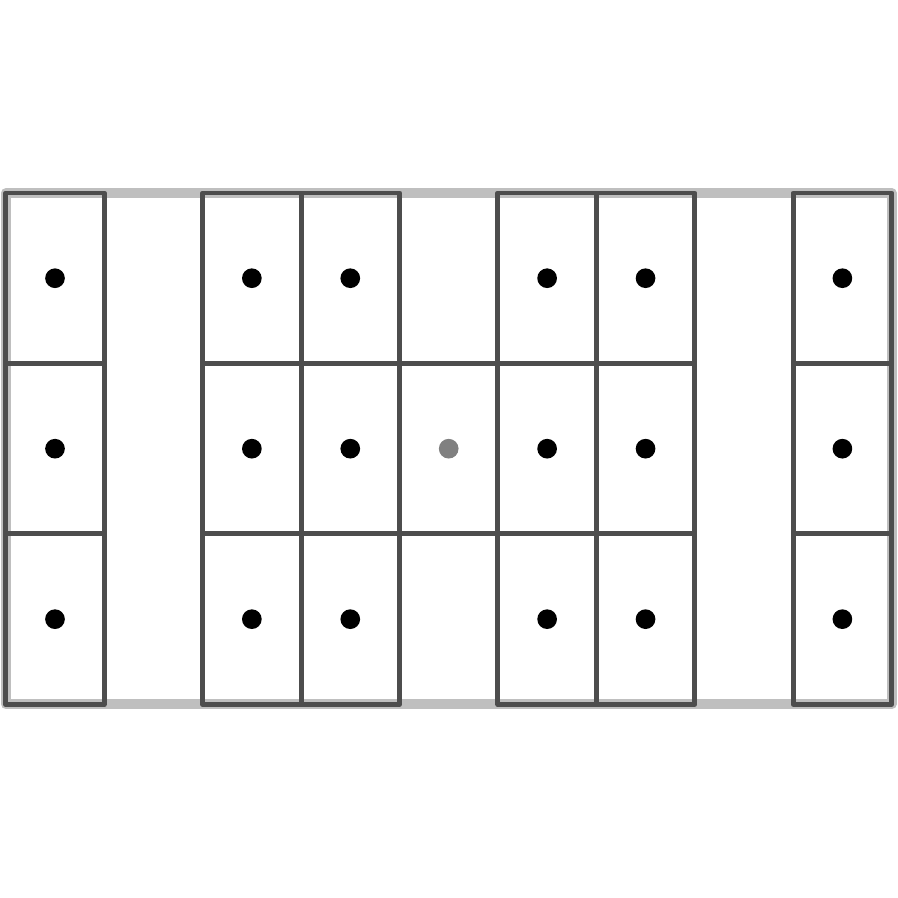}
    \label{fig:ds:p0q3w3}}
  \quad
  \subfloat[Digit set for $\tau=i\sqrt{3}$ and $w=5$.]{
    \includegraphics[width=0.2\linewidth]{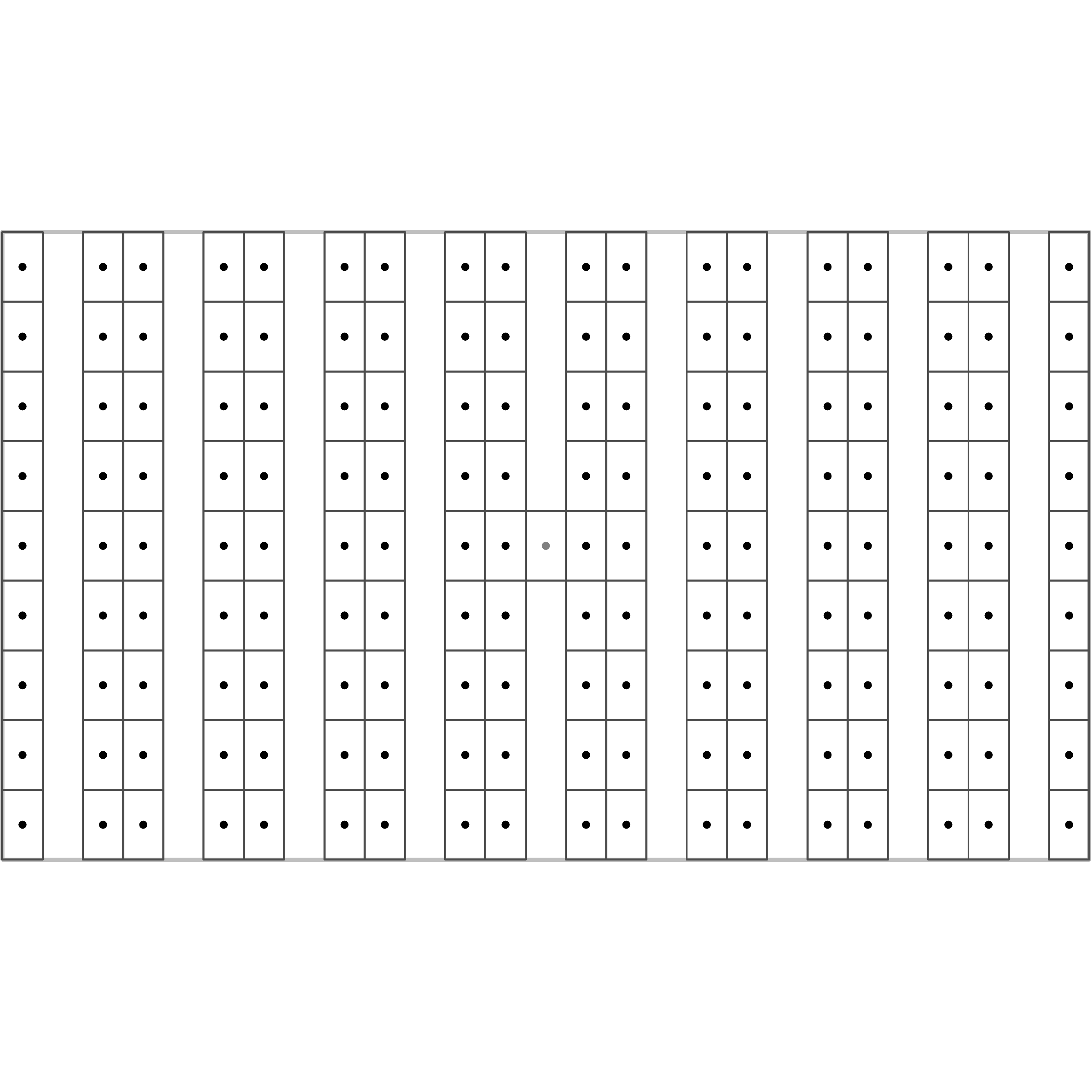}
    \label{fig:ds:p0q3w5}}
  \quad

  \caption[Digit sets for different $\tau$ and $w$]{Minimal norm
    representatives digit sets modulo $\tau^w$. For each digit $\eta$, the
    corresponding Voronoi cell $V_\eta$ is drawn. The large scaled Voronoi cell
    is $\tau^w V$.}
  \label{fig:digit-sets-p0}
\end{figure}


For all possible $\tau$ of this section, the corresponding Voronoi cell can be
written explicitly as
\begin{equation*}
  V = \f{\mathsf{polygon}}{\set{\tfrac12(\tau+1), \tfrac12(\tau-1),
      \tfrac12(-\tau-1), \tfrac12(-\tau+1)}}.
\end{equation*}
Remark that $V$ is an axis-parallel rectangle.


In this section we prove the following non-optimality result.

\begin{proposition}\label{pro:p-0-non-opt}
  Let $w$ be an odd integer with $w\geq3$ and the setting as above. Then there
  is an element of $\Ztau$ whose \wNAF{}-expansion is non-optimal.
\end{proposition}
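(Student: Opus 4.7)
The plan is to exhibit, for each $q\geq 2$ and each odd $w\geq 3$, an explicit element $z\in\Ztau$ together with a multi-expansion of weight strictly less than that of the $w$-NAF of $z$, following the template of Lemmas~\ref{lem:p-2-q-2-w-even} and~\ref{lem:p-2-q-2-w-2}. The structural fact behind the construction is that, since $p=0$ and $w$ is odd, $\tau^w=\epsilon\, q^{(w-1)/2}\tau$ with $\epsilon=(-1)^{(w-1)/2}$ is purely imaginary, so the scaled Voronoi cell $\tau^wV$ is an axis-aligned rectangle of real half-width $q^{(w+1)/2}/2$ and imaginary half-width $q^{w/2}/2$. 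Digits lying near the boundary of this rectangle, when multiplied by $\tau^{w-1}$, reach into the next scaled cell $\tau^{2w-1}V$ in a controlled way, and this geometric observation drives the construction of the cascading $w$-NAF.

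In the base case $q=2$, $w=3$ I would take $z=-1+d\cdot\tau^{w-1}$, where $d$ is whichever of the two minimum-norm representatives $\{1-\tau,\,1+\tau\}$ for its residue class modulo $\tau^w$ was selected as a digit by the restricted Voronoi cell convention; the remaining case is handled by conjugation. The pair $(-1,0)\uplus(d,w-1)$ is then a multi-expansion of weight~$2$. Running the greedy $w$-NAF algorithm on $z$ (reduce modulo $\tau^w$ to a class representative, subtract, divide by $\tau$, iterate) produces three non-zero digits separated by forced blocks of $w-1$ zeros, so the $w$-NAF of $z$ has weight~$3$, establishing non-optimality.

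For general $q\geq 2$ and odd $w\geq 3$, the plan is to adapt this construction using a digit tailored to $\tau^wV$. When $q$ is even the corners of $\tau^wV$ are lattice points, and one proceeds in close analogy with Lemma~\ref{lem:p-2-q-2-w-even} by choosing a corner $A$ and working with $A-1$; when $q$ is odd the corners are not lattice points, and one replaces $A-1$ by an appropriate edge-interior digit whose product with $\tau^{w-1}$ still lies near the boundary of $\tau^{2w-1}V$. In either setup the resulting $z$ has the form $d_1 + d_2\tau^{w-1}$, a weight-$2$ multi-expansion, that I would show to have a $w$-NAF of weight at least~$3$.

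The main obstacle will be the verification that the $w$-NAF really has weight at least~$3$: in contrast to $w$-subadditivity, there is no finite algorithmic check, and one must trace the greedy algorithm through several iterations and confirm that after each forced block of $w-1$ zeros the quotient is still neither zero nor reducible to a single further digit. As in Section~\ref{sec:case-p-2-q-2}, I expect this to split into cases by $q\bmod 2$ and $w\bmod 4$, with each case settled by an explicit algebraic calculation analogous to the one performed in Lemma~\ref{lem:p-2-q-2-w-even}.
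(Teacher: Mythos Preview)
Your proposal is correct in strategy and matches the paper's approach: exhibit a weight-$2$ expansion $d_1 + d_2\tau^{w-1}$ whose $w$-NAF has weight~$3$, with a case split on the parity of~$q$. Your base-case element for $q=2$, $w=3$ is exactly the paper's.

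The one execution difference worth noting concerns what you call the ``main obstacle''. Rather than tracing the greedy algorithm to certify that the $w$-NAF has weight at least~$3$, the paper simply \emph{writes down} the $w$-NAF explicitly on the right-hand side, e.g.\ for even~$q$
\[
  (A-1-\tau)\,\tau^{w-1} + (-s) \;=\; s\,\tau^{2w} + (-B-1)\,\tau^w + (-s-\tau^{w-1}),
\]
and then verifies (i) the algebraic identity and (ii) that each of the three right-hand coefficients is a digit in $\tau^w\wt V$. Since the $w$-NAF is unique, this immediately pins its weight at~$3$, with no need to iterate the reduction. The sign $s=(-1)^{(w+1)/2}$ absorbs uniformly what you anticipated as a further $w\bmod 4$ split, so the paper needs only the even/odd-$q$ cases you already identified.
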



For the remaining section---it contains the proof of the proposition above---we will assume $\tau=i\sqrt{q}$. The case $\tau=-i\sqrt{q}$ is
analogous. Before we start with the proof of
Proposition~\ref{pro:p-0-non-opt}, we need the following two lemmata.


\begin{lemma}\label{lem:p-0-q-even}
  Let the assumptions of Proposition~\ref{pro:p-0-non-opt} hold, and suppose
  that $q$ is even. Define $A := \frac12 \abs\tau^{w+1}$ and $B := \frac1\tau
  A$, and set $s=(-1)^{\frac12(w+1)}$. Then
  \begin{enumerate}[(a)]
  \item $1$ and $-1$ are digits,
  \item $A-1-\tau$ is a digit,
  \item $-B-1$ is a digit,
  \item $-s-\tau^{w-1}$ is a digit, and
  \item we have
    \begin{equation*}
      (A-1-\tau) \tau^{w-1} - s = s \tau^{2w} + (-B-1)\tau^w + (-s-\tau^{w-1}).
    \end{equation*}
  \end{enumerate}
\end{lemma}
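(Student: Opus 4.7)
The plan is to verify the five claims in order; (a)--(d) are geometric digit-membership statements and (e) is a direct algebraic identity. First I would record the geometry: since $\tau=i\sqrt q$ is purely imaginary and $w$ is odd, $\tau^w$ has argument $\pm\pi/2$, so $\tau^wV$ is an axis-aligned rectangle with horizontal half-width $A=\tfrac12 q^{(w+1)/2}$ and vertical half-width $\abs{B}=A/\sqrt q$, and with vertices $\pm A\pm B$. Using $\tau^2=-q$, one checks directly that $A=\tfrac{s}{2}\tau^{w+1}$, $B=\tfrac{s}{2}\tau^w$, and $\tau^{w-1}=(-1)^{(w-1)/2}q^{(w-1)/2}$ all lie in $\tau\Ztau$.

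For (a), the points $\pm1$ are in $\interior{\tau^wV}$ since $A\geq 2$, and are clearly coprime to $\tau$. For (b)--(d), each candidate is congruent to $-1$, $-1$, or $-s$ (respectively) modulo $\tau$, hence coprime to $\tau$. Geometrically: $A-1-\tau$ has coordinates $(A-1,-\sqrt q)$, which lies in $\interior{\tau^wV}$ whenever $\abs{B}>\sqrt q$ and on the lower edge in the single sub-case $(q,w)=(2,3)$; $-B-1$ has coordinates $(-1,\abs{B})$ and sits on the top edge one unit from the midpoint; and $-s-\tau^{w-1}=s\,(q^{(w-1)/2}-1)$ is a real number of absolute value strictly less than $A$, hence in the interior. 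For the boundary cases appearing in (b) and (c) I would invoke Definition~\ref{def:restr-voronoi} together with the $\pm\pi/2$ rotation induced by $\tau^w$---or, as in the proof of Lemma~\ref{lem:p-2-q-2-w-even}, argue by symmetry of $\wt V$ against the more convenient points $B+1$ and $B-1$---to confirm that the edge-lying candidates fall into $\tau^w\wt{V}$.

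For (e), I would substitute $A=\tfrac{s}{2}\tau^{w+1}$ and $B=\tfrac{s}{2}\tau^w$ directly. The left side becomes $\tfrac{s}{2}\tau^{2w}-\tau^{w-1}-\tau^w-s$, while the right side becomes $s\tau^{2w}-\tfrac{s}{2}\tau^{2w}-\tau^w-s-\tau^{w-1}=\tfrac{s}{2}\tau^{2w}-\tau^{w-1}-\tau^w-s$, so the two coincide. The identity then realises the counter-example needed for Proposition~\ref{pro:p-0-non-opt}: the left-hand side has weight $2$ with non-zero positions $w-1$ and $0$---a gap of less than $w$, so not a \wNAF---whereas the right-hand side has weight $3$ and is a \wNAF{} with non-zero positions $2w$, $w$, $0$ spaced exactly by $w$. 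The main obstacle is the boundary analysis in (b) and (c), since the candidate digits sit exactly on edges of $\tau^wV$; one must carefully track which pieces of $\partial V$ are assigned to $\wt V$ by Definition~\ref{def:restr-voronoi} and then transfer that assignment through the specific rotation encoded by $\tau^w$, with a case distinction according to the parities of $(w\pm1)/2$ (i.e.\ the sign~$s$).
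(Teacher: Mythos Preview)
Your proposal is correct and follows essentially the same approach as the paper: the same geometric description of $\tau^wV$ as an axis-aligned rectangle with half-widths $A$ and $\abs{B}$, the same key identities $A=\tfrac{s}{2}\tau^{w+1}$ and $B=\tfrac{s}{2}\tau^{w}$, the same isolation of the boundary sub-case $(q,w)=(2,3)$ in (b) and the edge argument in (c), and a direct verification of (e) (the paper does it as a short chain of carries, you by expanding both sides, which amounts to the same computation). Your identification of the boundary analysis in (b) and (c) as the only delicate point matches the paper exactly.
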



\begin{figure}
  \centering
  \includegraphics{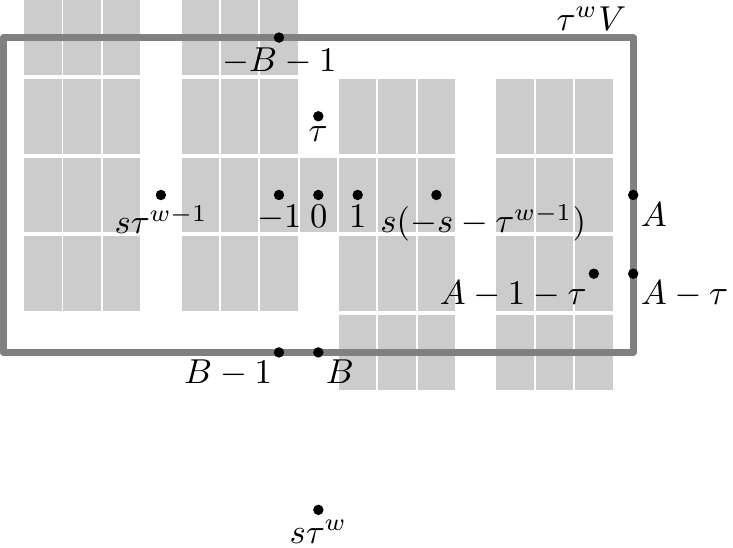}
  \caption{The $q$-is-even situation. The figure shows the configuration $p=0$,
    $q=4$, $\tau=2i$, $w=3$, $s=1$. A polygon filled grey represents a digit, a
    dot represents a point of interest in Lemma~\ref{lem:p-0-q-even}.}
  \label{fig:nonopt-p0qeven}
\end{figure}


Figure~\ref{fig:nonopt-p0qeven} shows the digits used in
Lemma~\ref{lem:p-0-q-even} for a special configuration. 


\begin{proof}
  
  \begin{enumerate}[(a)]

  \item A direct calculation shows that $-1$ and $1$ are in an open disc with
    radius $\frac12 \abs\tau^w$, which itself is contained in $\tau^wV$. Both
    are not divisible by $\tau$, so both are digits.

  \item Because $w$ is odd, $q$ is even and $\tau=i\sqrt{q}$, we can rewrite
    the point $A$ as
    \begin{equation*}
      A = \frac12 \abs\tau^{w+1} = \frac{q}{2} q^{\frac12 (w-1)}
    \end{equation*}
    and see that $A$ is a (positive) rational integer and that $\tau^{w-1}
    \divides A$. Furthermore, $A$ halves an edge of $\tau^wV$. Therefore, $A-1$
    is inside $\tau^wV$. If $q\geq4$ or $w\geq5$, the point $A-1-\tau$ is
    inside $\tau^wV$, too, since the vertical (parallel to the imaginary axis)
    side-length of $\tau^wV$ is $\abs\tau^w$ and $\abs\tau < \frac12
    \abs\tau^w$. Since $\tau^2 \divides A$, we obtain $\tau \ndivides
    A-1-\tau$, so $A-1-\tau$ is a digit. If $q=2$ and $w=3$, we have
    $A-1-\tau=1-\tau$. Due to the definition of the restricted Voronoi cell
    $\wt{V}$, cf.\ Definition~\ref{def:restr-voronoi}, we obtain that $1-\tau$
    is a digit.

  \item Previously we saw $\tau^{w-1} \divides A$. Using the definition of $B$
    and $w\geq3$ yields $\tau \divides B$. It is easy to check that $B=\frac12
    s\tau^w$. Furthermore, we see that $B$ is on the boundary of the
    Voronoi cell $\tau^wV$. By a symmetry argument we get the same results for
    $-B$. By the construction of the restricted Voronoi cell $\wt{V}$, cf.\
    Definition~\ref{def:restr-voronoi}, we obtain that $-B-1$ is in
    $\tau^w\wt{V}$ and since clearly $\tau \ndivides (-B-1)$, we get that
    $-B-1$ is a digit.

  \item We first remark that $\tau^{w-1} \in \Z$ and that $\abs{\tau^{w-1}}
    \leq A$. Even more, we get $0 < -s\tau^{w-1} \leq A$. Since $A$ is on the
    boundary of $\tau^wV$, we obtain $-1-s\tau^{w-1} \in
    \tau^w\interior{V}$. By symmetry the result is true for $-s-\tau^{w-1}$ and
    clearly $\tau \ndivides (-s-\tau^{w-1})$, so $-s-\tau^{w-1}$ is a digit.

  \item We get
    \begin{align*}
      (A-1-\tau) \tau^{w-1} + (-s)
      &= (A-\tau) \tau^{w-1} + (-s-\tau^{w-1}) \\
      &= (B-1) \tau^w + (-s-\tau^{w-1}) \\
      &= s \tau^{2w} + (B-1-s\tau^w) \tau^w + (-s-\tau^{w-1}) \\
      &= s \tau^{2w} + (-B-1)\tau^w + (-s-\tau^{w-1}),
    \end{align*}
    which can easily be verified. We used $B=\frac1\tau A$. 
    \qedhere
  \end{enumerate}
\end{proof}


\begin{lemma}\label{lem:p-0-q-odd}
  Let the assumptions of Proposition~\ref{pro:p-0-non-opt} hold, and suppose
  that $q$ is odd. Define $A' := \frac12 \abs\tau^{w+1}$, $B' := \frac1\tau
  A$, $A:=A'-\frac12$ and $B:=B'+\frac\tau2$, and set $C=-A$, $t =
  (q+1)/2$ and $s=(-1)^{\frac12(w+1)} \in\set{-1,1}$. Then
  \begin{enumerate}[(a)]
  \item $1$ and $-1$ are digits,
  \item $A-\tau$ is a digit,
  \item $sC$ is a digit,
  \item $-B-1$ is a digit,
  \item $sC-t\tau^{w-1}$ is a digit, and
  \item we have
    \begin{equation*}
      (A-\tau) \tau^{w-1} + (sC) = s \tau^{2w} + (-B-1)\tau^w + (sC-t\tau^{w-1}).
    \end{equation*}
  \end{enumerate}
  
\end{lemma}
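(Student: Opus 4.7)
The plan is to follow the structure of Lemma~\ref{lem:p-0-q-even}, with the parity change forcing two half-integer adjustments: because $q$ is odd and $w$ is odd, the natural midpoints $A'$ (of the right edge of $\tau^wV$) and $B'$ (of the bottom edge) are no longer lattice points, and the shifts $A=A'-\tfrac12\in\Z$ and $B=B'+\tfrac{\tau}{2}\in\tau\Z$ bring them back into $\Ztau$. Geometrically, $w$ odd and $\tau=i\sqrt q$ give $\tau^w$ purely imaginary of modulus $q^{w/2}$, so $\tau^wV$ is the axis-parallel rectangle centred at $0$ with horizontal half-width $A'=\tfrac12 q^{(w+1)/2}$ and vertical half-width $\tfrac12 q^{w/2}$.

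For each of the digit statements (a)--(e) I would check two things: strict interiority of $\tau^wV$ (which suffices for membership in $\tau^w\wt V$) and non-divisibility by~$\tau$. The key observation for the latter is that $\Ztau=\Z\oplus\Z\tau$ and $\tau(a+b\tau)=-bq+a\tau$, so $\tau\divides a+b\tau$ iff $q\divides a$; in particular a nonzero integer $m$ is divisible by $\tau$ iff $q\divides m$, while every multiple of $\tau$ is automatically divisible by $\tau$. Item (a) is immediate: $\pm 1$ lie well inside $\tau^wV$ (since $\tfrac12 q^{w/2}\ge\tfrac12\cdot 3^{3/2}>1$) and are not divisible by $q$. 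For (b), $2A=q^{(w+1)/2}-1\equiv -1\pmod q$ gives $q\ndivides A$, and $A-\tau$ sits $\tfrac12$ inside the right edge and $\sqrt q$ below the midline, hence strictly inside $\tau^wV$ whenever $\sqrt q<\tfrac12 q^{w/2}$, satisfied for $q\ge 3$, $w\ge 3$; (c) follows since by the same divisibility computation $A$ itself is a digit, so $sC=\mp A$ is too. For (d), $-B-1=n\tau-1$ with $n=(q^{(w-1)/2}-1)/2\in\Z$, so its $\Z$-part $-1$ is not divisible by $q$, and the point lies well inside the upper half of $\tau^wV$. For (e), since $w-1$ is even, $\tau^{w-1}=-sq^{(w-1)/2}$, whence
\begin{equation*}
  sC-t\tau^{w-1}=-sA+stq^{(w-1)/2}=s\cdot\frac{1+q^{(w-1)/2}}{2},
\end{equation*}
a small signed integer $u$ with $2u\equiv 1\pmod q$, so $q\ndivides u$ and the magnitude check is routine.

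For (f) I would cancel the common terms $sC$ and $-\tau^w$ on both sides, reducing the claim to $A+t=s\tau^{w+1}-B\tau$. Using $s^2=1$ and $\tau^{w+1}=(-1)^{(w+1)/2}q^{(w+1)/2}=sq^{(w+1)/2}$ gives $s\tau^{w+1}=q^{(w+1)/2}=2A'$, while $B'\tau=A'$ and $\tau^2=-q$ yield $-B\tau=-B'\tau-\tfrac{\tau^2}{2}=-A'+\tfrac q2$. Both sides then equal $A'+\tfrac q2$, matching $A+t=(A'-\tfrac12)+(q+1)/2$. The main obstacle is bookkeeping: simultaneously tracking the sign $s=(-1)^{(w+1)/2}$ and the two half-integer shifts throughout the geometric and divisibility arguments; the smallest case $(q,w)=(3,3)$ is where the interiority margin is tightest, so I would double-check by hand that no candidate lattice point lands exactly on the boundary of $\tau^wV$ and thus requires an appeal to the tie-breaking convention of Definition~\ref{def:restr-voronoi}.
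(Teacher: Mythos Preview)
Your proposal is correct and follows essentially the same route as the paper: for each of (a)--(e) you verify that the candidate lies strictly in the interior of $\tau^wV$ and is not divisible by $\tau$, and for (f) you verify the algebraic identity. Your explicit divisibility criterion ($\tau\mid a+b\tau$ iff $q\mid a$) and your cancellation argument for (f) are slight streamlinings of the paper's computations, and your caution about the boundary in the case $(q,w)=(3,3)$ is unnecessary here (in contrast to the even-$q$ lemma, all candidates land strictly inside), but none of this is a genuine departure from the paper's approach.
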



\begin{figure}
  \centering
  \includegraphics{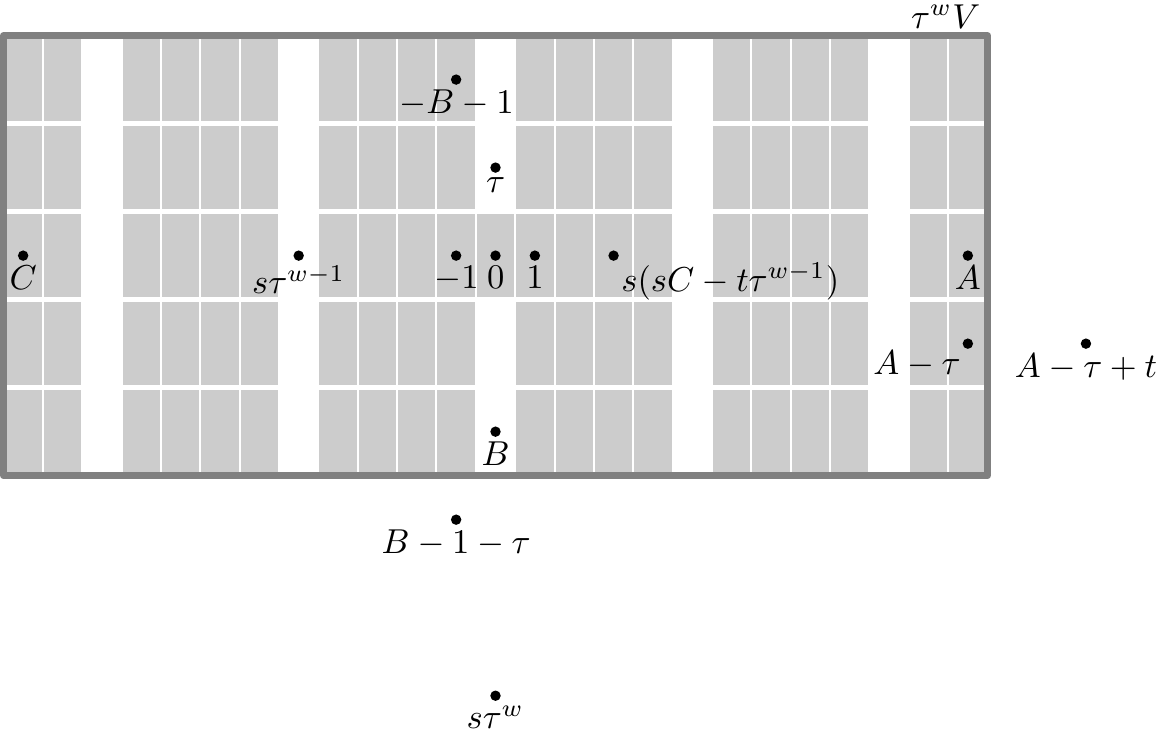}
  \caption{The $q$-is-odd situation, The figure shows the configuration $p=0$,
    $q=5$, $\tau=i\sqrt{5}$, $w=3$, $s=1$. A polygon filled grey represents a
    digit, a dot represents a points of interest in
    Lemma~\ref{lem:p-0-q-odd}.}
  \label{fig:nonopt-p0qodd}
\end{figure}


Figure~\ref{fig:nonopt-p0qodd} shows the digits used in
Lemma~\ref{lem:p-0-q-odd} for a special configuration. 


\begin{proof}
  \begin{enumerate}[(a)]

  \item See the proof of Lemma~\ref{lem:p-0-q-even}.
  \item We can rewrite the point $A$ as
    \begin{equation*}
      A = \frac12\abs\tau^{w+1} - \frac12 
      = \frac12 \left( q^{\frac12(w+1)} - 1 \right).
    \end{equation*}
    Since $q$ is odd with $q\geq3$ and $w$ is odd with $w\geq3$, we obtain
    $A\in\Z$ with $0 < A < \frac12 \abs\tau^{w+1}$ and $q \ndivides
    A$. Therefore $\tau \ndivides A$ and $A$ is in the interior of the Voronoi
    cell $\tau^wV$. The vertical (parallel to the imaginary axis)
    side-length of $\tau^wV$ is $\abs\tau^w$ and $\abs\tau < \frac12
    \abs\tau^w$, so $A-\tau$ is in the interior of $\tau^wV$, too. Since $\tau
    \ndivides A-\tau$, the element $A-\tau$ is a digit.

  \item We got $\tau \ndivides A$ and $A$ is in the interior of the Voronoi
    cell $\tau^wV$. Therefore $A$ is a digit, and---by symmetry---$sC$ is a
    digit, too.

  \item We obtain
    \begin{equation*}
      B = - \frac12 i \sqrt{q} \abs\tau^{w-1} + i \frac{1}{2} \sqrt{q}
      = \frac12 \tau \left( -\abs\tau^{w-1} + 1 \right),
    \end{equation*}
    which is inside $\tau^wV$. Therefore the same is true for $-B$. The
    horizontal (parallel to the real axis) side-length of $\tau^wV$ is larger
    than $2$, therefore $-B-1$ is inside $\tau^wV$, too. Since $\tau \divides
    B$ we get $\tau \ndivides (-B-1)$, so $-B-1$ is a digit.
  \item We obtain
    \begin{align*}
      0 < s  \left( sC - t \tau^{w-1} \right)
      &= \frac12 \left( (q+1) \abs\tau^{w-1} - \abs\tau^{w+1} + 1 \right) \\
      &= \frac12 \left( \abs\tau^{w-1} + 1 \right)
      < \frac12 \abs\tau^{w+1}.
    \end{align*}
    This means that $sC - t \tau^{w-1}$ is in the interior of the Voronoi cell
    $\tau^wV$. Since $\tau \ndivides (-A) = C$, the same is true for $sC - t
    \tau^{w-1}$, i.e.\ it is a digit.

  \item We get
    \begin{align*}
      (A-\tau) \tau^{w-1} + (sC)
      &= (A-\tau+t) \tau^{w-1} + (sC-t\tau^{w-1}) \\
      &= (B-1-\tau) \tau^w + (sC-t\tau^{w-1}) \\
      &= s \tau^{2w} + (B-1-\tau-s\tau^w) \tau^w + (sC-t\tau^{w-1}) \\
      &= s \tau^{2w} + (-B-1)\tau^w + (sC-t\tau^{w-1}),
    \end{align*}
    which can be checked easily.
    \qedhere
  \end{enumerate}
\end{proof}
 

The two lemmata above now allow us to prove the non-optimality result of this
section.


\begin{proof}[Proof of Proposition~\ref{pro:p-0-non-opt}]
  Let $q$ be even. In Lemma~\ref{lem:p-0-q-even} we got
  \begin{equation*}
    (A-1-\tau) \tau^{w-1} - s = s \tau^{2w} + (-B-1)\tau^w + (-s-\tau^{w-1})
  \end{equation*}
  and that all the coefficients there were digits, i.e.\ we have valid
  expansions on the left and right hand side. The left hand side has weight~$2$
  and is not a \wNAF{}, whereas the right hand side has weight~$3$ and is a
  \wNAF{}. Therefore a counter-example to the optimality was found.

  The case $q$ is odd works analogously. We got
  the counter-example
  \begin{equation*}
    (A-\tau) \tau^{w-1} + (sC) = s \tau^{2w} + (-B-1)\tau^w + (sC-t\tau^{w-1})
  \end{equation*}
  in Lemma~\ref{lem:p-0-q-odd}.
\end{proof}


\section{Computational Results}
\label{sec:comput-results}


\begin{figure}
  \centering
  \includegraphics[angle=90,width=\textwidth]{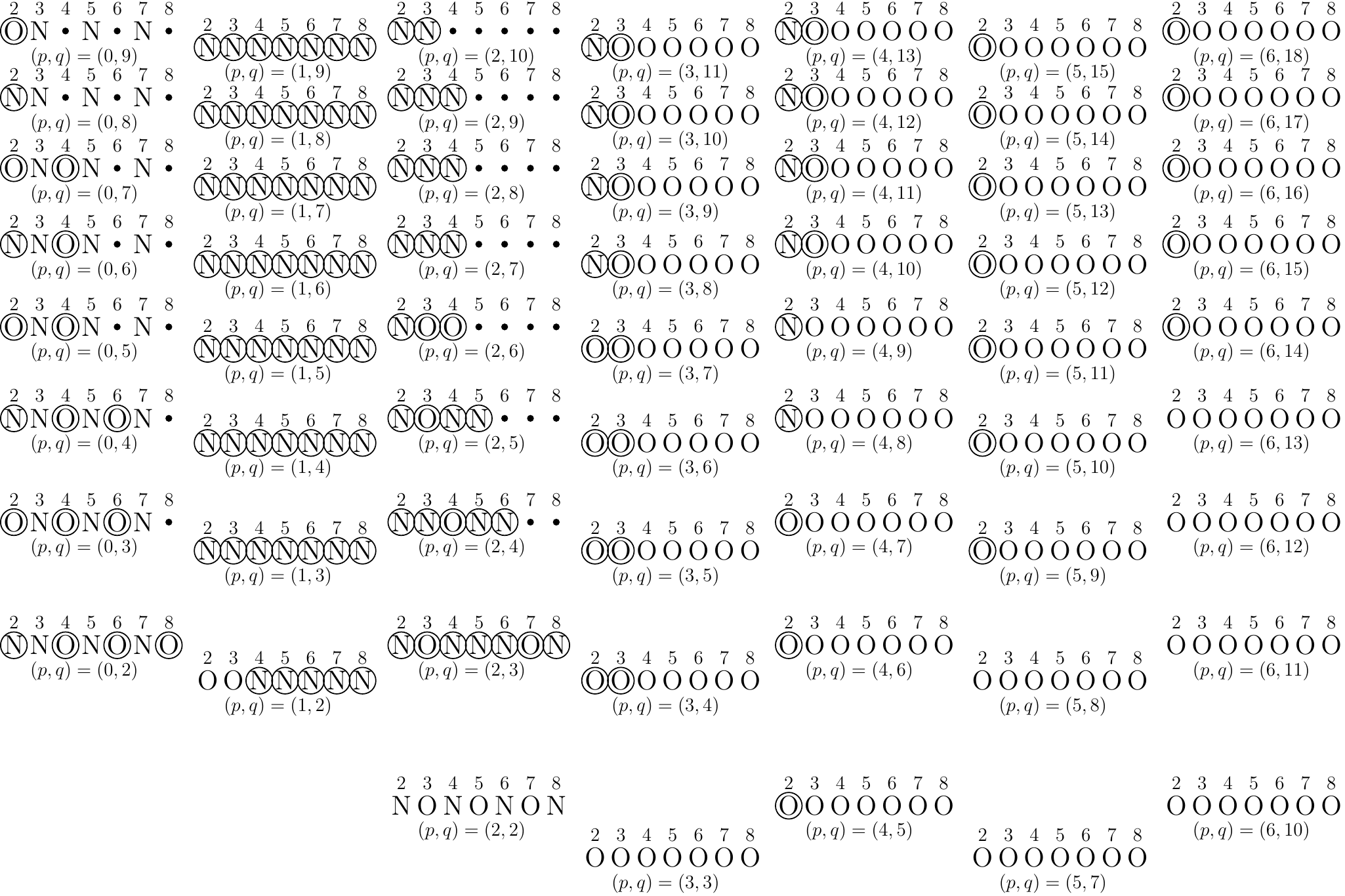}
  \caption{The optimality map including computational results. Below each block
    the parameters $p$ and $q$ (fulfilling $\tau^2-p\tau+q=0$) are printed. A
    block is positioned according to $\tau$ in the complex plane. Above each
    block are the $w$. The symbol O means that the \wNAF{}-expansions are
    optimal, N means there are non-optimal \wNAF{}-expansions. If a result is
    surrounded by a circle, then it is a computational result. Otherwise, if
    there is no circle, then the result comes from a theorem given here or was
    already known. A dot means that there is no result available.}
  \label{fig:optimality-map}
\end{figure}


This section contains computational results on the optimality of \wNAF{}s for some special
imaginary quadratic bases~$\tau$ and integers~$w$. We assume that we have a
$\tau$ coming from integers $p$ and $q$ with $q>p^2/4$. Again, we continue
looking at \wNAF{}-number systems with minimal norm representative digit set
modulo $\tau^w$ with $w\geq2$. 

As mentioned in Section~\ref{sec:abstract-optimality}, the condition
\wsubadditivity{}-condition---and therefore optimality---can be verified by
finding a \wNAF{}-expansion with weight at most~$2$ in $w \left( \card* \cD - 1
\right)$ cases. The computational results can be found in
Figure~\ref{fig:optimality-map}.


\renewcommand{\MR}[1]{}

\bibliographystyle{../shared/amsplaininitials}
\bibliography{../shared/cheub}

\providecommand{\Submitted}{Submitted} \providecommand{\availableat}{ available
  at } \providecommand{\alsoavailableat}{ also available at }
  \providecommand{\evavailableat}{ earlier version available at }
  \providecommand{\toappearin}{To appear in }
  \providecommand{\doi}[1]{\href{http://dx.doi.org/#1}{\path{doi:#1}}}
  \providecommand{\etc}{\emph{etc.}}\def\cprime{$'$}
\providecommand{\bysame}{\leavevmode\hbox to3em{\hrulefill}\thinspace}
\providecommand{\MR}{\relax\ifhmode\unskip\space\fi MR }
\providecommand{\MRhref}[2]{%
  \href{http://www.ams.org/mathscinet-getitem?mr=#1}{#2}
}
\providecommand{\href}[2]{#2}
\begin{thebibliography}{10}

\bibitem{avanzi:mywnaf}
R.~Avanzi, \emph{{A Note on the Signed Sliding Window Integer Recoding and a
  Left-to-Right Analogue}}, {Selected Areas in Cryptography: 11th International
  Workshop, SAC 2004, Waterloo, Canada, August 9-10, 2004, Revised Selected
  Papers}, Lecture Notes in Comput. Sci., vol. 3357, Springer-Verlag, Berlin,
  2004, pp.~130--143.

\bibitem{Avanzi-Heuberger-Prodinger:2006:minim-hammin}
R.~Avanzi, C.~Heuberger, and H.~Prodinger, \emph{Minimality of the {H}amming
  weight of the $\tau$-{NAF} for {K}oblitz curves and improved combination with
  point halving}, Selected Areas in Cryptography: 12th International Workshop,
  SAC 2005, Kingston, ON, Canada, August 11--12, 2005, Revised Selected Papers
  (B.~Preneel and S.~Tavares, eds.), Lecture Notes in Comput. Sci., vol. 3897,
  Springer, Berlin, 2006, pp.~332--344.

\bibitem{Avanzi-Heuberger-Prodinger:2006:scalar-multip-koblit-curves}
\bysame, \emph{Scalar multiplication on {K}oblitz curves. {U}sing the
  {F}robenius endomorphism and its combination with point halving: {E}xtensions
  and mathematical analysis}, Algorithmica \textbf{46} (2006), 249--270.

\bibitem{Avanzi-Heuberger-Prodinger:2010:arith-of}
\bysame, \emph{Arithmetic of supersingular {K}oblitz curves in characteristic
  three}, Tech. Report 2010-8, Graz University of Technology, 2010,
  \url{http://www.math.tugraz.at/fosp/pdfs/tugraz_0166.pdf}, also available as
  Cryptology ePrint Archive, Report 2010/436, \url{http://eprint.iacr.org/}.

\bibitem{Blake-Murty-Xu:ta:nonad-radix}
I.~Blake, K.~Murty, and G.~Xu, \emph{Nonadjacent radix-$\tau$ expansions of
  integers in {E}uclidean imaginary quadratic number fields}, Canad. J. Math.
  \textbf{60} (2008), no.~6, 1267--1282.

\bibitem{Blake-Kumar-Xu:2005:effic-algor}
I.~F. Blake, V.~Kumar~Murty, and G.~Xu, \emph{Efficient algorithms for
  {K}oblitz curves over fields of characteristic three}, J. Discrete Algorithms
  \textbf{3} (2005), no.~1, 113--124. \MR{MR2167767 (2006f:11069)}

\bibitem{Blake-Murty-Xu:2005:naf}
\bysame, \emph{A note on window $\tau$-{NAF} algorithm}, Inform. Process. Lett.
  \textbf{95} (2005), 496--502.

\bibitem{German-Kovacs:2007:number-system-const}
L.~Germ{\'a}n and A.~Kov{\'a}cs, \emph{On number system constructions}, Acta
  Math. Hungar. \textbf{115} (2007), no.~1-2, 155--167. \MR{MR2316627
  (2008a:11010)}

\bibitem{Gordon:1998}
D.~M. Gordon, \emph{A survey of fast exponentiation methods}, J. Algorithms
  \textbf{27} (1998), 129--146. \MR{99g:94014}

\bibitem{Heuberger:2010:nonoptimality}
C.~Heuberger, \emph{Redundant $\tau$-adic expansions {II}: {N}on-optimality and
  chaotic behaviour}, Math. Comput. Sci. \textbf{3} (2010), 141--157.

\bibitem{Heuberger-Krenn:2010:wnaf-analysis}
C.~Heuberger and D.~Krenn, \emph{Analysis of width-$w$ non-adjacent forms to
  imaginary quadratic bases}, Tech. Report 2010-10, Graz University of
  Technology, 2010, \availableat
  \url{http://www.math.tugraz.at/fosp/pdfs/tugraz_0168.pdf},\alsoavailableat
  arXiv:1009.0488v2 [math.NT].

\bibitem{Heuberger-Prodinger:2006:analy-alter}
C.~Heuberger and H.~Prodinger, \emph{Analysis of alternative digit sets for
  nonadjacent representations}, Monatsh. Math. \textbf{147} (2006), 219--248.

\bibitem{Jedwab-Mitchell:1989}
J.~Jedwab and C.~J. Mitchell, \emph{{Minimum weight modified signed-digit
  representations and fast exponentiation}}, Electron. Lett. \textbf{25}
  (1989), 1171--1172.

\bibitem{Knuth:1998:Art:2}
D.~E. Knuth, \emph{Seminumerical algorithms}, third ed., The Art of Computer
  Programming, vol.~2, Addison-Wesley, 1998.

\bibitem{Koblitz:1992:cm}
N.~Koblitz, \emph{C{M}-curves with good cryptographic properties}, Advances in
  cryptology---CRYPTO '91 (Santa Barbara, CA, 1991), Lecture Notes in Comput.
  Sci., vol. 576, Springer, Berlin, 1992, pp.~279--287. \MR{94e:11134}

\bibitem{Koblitz:1998:ellip-curve}
\bysame, \emph{An elliptic curve implementation of the finite field digital
  signature algorithm}, Advances in cryptology---CRYPTO '98 (Santa Barbara, CA,
  1998), Lecture Notes in Comput. Sci., vol. 1462, Springer, Berlin, 1998,
  pp.~327--337. \MR{MR1670960 (99j:94052)}

\bibitem{Kroell:ta:optim-of}
M.~Kr{\"o}ll, \emph{Optimality of digital expansions to the base of the
  {F}robenius endomorphism on {K}oblitz curves in characteristic three}, Tech.
  Report 2010-09, Graz University of Technology, 2010, \availableat
  \url{http://www.math.tugraz.at/fosp/pdfs/tugraz_0167.pdf}.

\bibitem{Meier-Staffelbach:1993:effic}
W.~Meier and O.~Staffelbach, \emph{Efficient multiplication on certain
  nonsupersingular elliptic curves}, Advances in cryptology---CRYPTO '92 (Santa
  Barbara, CA, 1992), Lecture Notes in Comput. Sci., vol. 740, Springer,
  Berlin, 1993, pp.~333--344. \MR{MR1287863 (95e:94037)}

\bibitem{Muir-Stinson:2005:new-minim}
J.~A. Muir and D.~R. Stinson, \emph{New minimal weight representations for
  left-to-right window methods}, Topics in Cryptology --- CT-RSA 2005 The
  Cryptographers' Track at the RSA Conference 2005, San Francisco, CA, USA,
  February 14--18, 2005, Proceedings (A.~J. Menezes, ed.), Lecture Notes in
  Comput. Sci., vol. 3376, Springer, Berlin, 2005, pp.~366--384.

\bibitem{muirstinson:minimality}
\bysame, \emph{Minimality and other properties of the width-{$w$} nonadjacent
  form}, Math. Comp. \textbf{75} (2006), 369--384. \MR{MR2176404}

\bibitem{Phillips-Burgess:2004:minim-weigh}
B.~Phillips and N.~Burgess, \emph{Minimal weight digit set conversions}, IEEE
  Trans. Comput. \textbf{53} (2004), 666--677.

\bibitem{Reitwiesner:1960}
G.~W. Reitwiesner, \emph{Binary arithmetic}, Advances in computers, vol.~1,
  Academic Press, New York, 1960, pp.~231--308.

\bibitem{Solinas:1997:improved-algorithm}
J.~A. Solinas, \emph{An improved algorithm for arithmetic on a family of
  elliptic curves}, Advances in Cryptology --- {CRYPTO} '97. 17th annual
  international cryptology conference. Santa Barbara, {CA}, {USA}. August
  17--21, 1997. Proceedings (B.~S. Kaliski, jun., ed.), Lecture Notes in
  Comput. Sci., vol. 1294, Springer, Berlin, 1997, pp.~357--371.

\bibitem{Solinas:2000:effic-koblit}
\bysame, \emph{Efficient arithmetic on {K}oblitz curves}, Des. Codes Cryptogr.
  \textbf{19} (2000), 195--249. \MR{2002k:14039}

\bibitem{Woestijne:2009:struc-of}
C.~van~de Woestijne, \emph{The structure of {A}belian groups supporting a
  number system (extended abstract)}, Actes des rencontres du CIRM \textbf{1}
  (2009), no.~1, 75--79.

\end{thebibliography}


\end{document}


